\newlength{\depthofsumsign}
\DeclareRobustCommand{\cyrins}[1]{%
  \begingroup\fontfamily{erewhon-TLF}%
  \foreignlanguage{russian}{#1}%
  \endgroup
}
\DeclareMathOperator{\Lyn}{Lyn}
\DeclareMathOperator{\Ls}{Ls}
\DeclareMathOperator{\Gal}{Gal}
\DeclareMathOperator{\Span}{span}
\DeclareMathOperator{\sgn}{sgn}
\DeclareMathOperator{\D}{d}
\DeclareMathOperator{\I}{Im}
\DeclareMathOperator{\R}{Re}
\DeclareMathOperator{\Li}{Li}
\DeclareMathOperator*{\fib}{fib}
\DeclareMathOperator{\Fib}{Fib}
\def\Xint{\int\kern-1.1em}
\def\counterint{\Xint\rotcirclearrowleft}
\def\qed{\hfill$ \blacksquare$}
\def\eor{\hfill$ \square$}
\def\RotSymbol#1#2#3{\rotatebox[origin=c]{#1}{$#2#3$}}
\def\rotcirclearrowleft{\mathpalette{\RotSymbol{-14}}\circlearrowleft}
\theoremstyle{plain}
\newtheorem{theorem}{Theorem}[section]
\newtheorem{proposition}[theorem]{Proposition}
\newtheorem{lemma}[theorem]{Lemma}
\newtheorem{corollary}[theorem]{Corollary}
\theoremstyle{definition}
\theoremstyle{remark}
\newtheorem{remark}[theorem]{Remark}
\numberwithin{equation}{section}
\DeclareSymbolFont{egr}{OML}{antt}{m}{it}
\DeclareMathSymbol{\varsigma}{\mathord}{egr}{"26}
\DeclareMathSymbol{\zeta}{\mathord}{egr}{"10}
\DeclareMathSymbol{\xi}{\mathord}{egr}{"18}
\begin{document}

\pagenumbering{roman}
\selectlanguage{english}
%+Title
\title{Hyper-Mahler measures via Goncharov--Deligne cyclotomy}
\author[Yajun Zhou]{Yajun Zhou}\dedicatory{\begin{flushright}To Prof.\  Weinan E, on the occasion of his 60th birthday\end{flushright}}
\address{Program in Applied and Computational Mathematics (PACM), Princeton University, Princeton, NJ 08544} \email{yajunz@math.princeton.edu}\curraddr{ \textsc{Academy of Advanced Interdisciplinary Studies (AAIS), Peking University, Beijing 100871, P. R. China}}\email{yajun.zhou.1982@pku.edu.cn}

\date{\today}\subjclass[2020]{11G55, 11M32, 11R06}
\keywords{Hyper-Mahler measures, multiple polylogarithms}
\thanks{This research was supported in part  by the Applied Mathematics Program within the Department of Energy
(DOE) Office of Advanced Scientific Computing Research (ASCR) as part of the Collaboratory on
Mathematics for Mesoscopic Modeling of Materials (CM4).
}
%-Title

%+Abstract
\begin{abstract} The hyper-Mahler measures  $ m_k( 1+x_1+x_2),k\in\mathbb Z_{>1}$ and   $ m_k( 1+x_1+x_2+x_3),k\in\mathbb Z_{>1}$  are evaluated in closed form via  Goncharov--Deligne periods, namely  $\mathbb Q$-linear combinations of multiple polylogarithms at  cyclotomic points   (complex-valued coordinates that are roots of unity). Some infinite series related to these hyper-Mahler measures are also explicitly represented as Goncharov--Deligne periods of  levels $1$, $2$, $ 3$, $4$, $6$, $8$, $10$ and $12$. \end{abstract}
%-Abstract
\maketitle
%+Contents
%\pagenumbering{roman}
%\tableofcontents

%\clearpage

\pagenumbering{arabic}

\section{Introduction}For an $ n$-variate Laurent polynomial $ P\in\mathbb C[x_1^{\pm1},\dots,x_n^{\pm1}]$ that does not vanish identically, its   $ k$-Mahler measure  \cite[Definition 1]{KurokawaLalinOchiai2008} is given by the following integral: \begin{align}
m_{k}(P)\colonequals\int_0^1\D t_1\cdots\int_0^1\D t_n\,\log^{k}\left|P\left(e^{2\pi i t_1},\dots,e^{2\pi i t_n}\right)\right|,
\end{align}where $k$ is a positive integer. When $ k=1$, this becomes the logarithmic Mahler measure $ m(P)=m_1(P)$ of $P$. When the integer $k$ is greater than $1$, the quantity $m_k(P)$ is a hyper-Mahler measure.

Smyth \cite[Appendix 1]{Boyd1981b} has evaluated a logarithmic Mahler measure \begin{align}m_1(1+x_1+x_2+x_3)=\frac{7\zeta_{3}}{2\pi^2} \end{align}    in terms of Ap\'ery's constant $ \zeta_{3}\colonequals\sum_{n=1}^\infty\frac{1}{n^3}$. Borwein \textit{et al.}\ (see \cite[(6.10)]{BSWZ2012} and \cite[(4.17)]{BorweinStraub2012Mahler}) have
computed a hyper-Mahler measure \begin{align}
m_2(1+x_1+x_2+x_3)=\frac{24\Li_4\left(\frac12\right)-\frac{\pi ^4}{5}+21\zeta_3\log2-\pi^2\log^22+\log^42}{\pi^{2}}
\label{eq:m2Borwein}\end{align} involving a polylogarithmic constant   $ \Li_4\left(\frac12\right)\colonequals\sum_{n=1}^\infty \frac{1}{2^{n}n^4}$. If we introduce an alternating double sum  $ \zeta_{- 3, 1}\colonequals\sum_{m>n>0}\frac{(-1)^{m}}{m^{3}n}\equiv\sum_{m=1}^\infty\sum_{n=1}^{m-1}\frac{(-1)^{m}}{m^{3}n}$, then we can condense \eqref{eq:m2Borwein} into\begin{align}m_2(1+x_1+x_2+x_3)=\frac{12\zeta_{- 3, 1}}{\pi^{2}}+\frac{ \pi ^2}{20},
\tag{\ref{eq:m2Borwein}$'$}
\end{align}in the light of a proven relation between $ \zeta_{- 3, 1}$ and $ \Li_4\left(\frac12\right)$ \cite[(5.5)]{BZB2008}.

In this work, we study the hyper-Mahler measures  $ m_k( 1+x_1+x_2)$  and  $ m_k( 1+x_1+x_2+x_3)$  in detail, for all $ k\in\mathbb Z_{>1}$. For every positive integer $ k$,  we express the quantity $ \pi i\,m_k( 1+x_1+x_2)$    as a $ \mathbb Q$-linear combination of    special values for Goncharov's \cite{Goncharov1997} multiple polylogarithm\footnote{While defining $ \Li_{a_1,\dots,a_n}(z_1,\dots,z_n)$ and  $ \zeta_{ {a_1}, \dots,a_n}$, we sort the positive integer parameters $  {a_1}, \dots,a_n$ as in \texttt{Maple 2022}'s built-in functions   \texttt{MultiPolylog} and \texttt{MultiZeta} (which is Hoffman's \cite{Hoffman1992} order---see also \cite{Waldschmidt2002,Deligne2010,MZVdatamine2010,Broadhurst2013MZV,Frellesvig2016,Frellesvig2018Maple,Au2022a,Fresan2022period} for the same practice). In the  \texttt{HyperInt} package \cite{Panzer2015}  for \texttt{Maple}, our  $ \Li_{a_1,\dots,a_n}(z_1,\dots,z_n)$  is represented by $\mathtt{Mpl}  ([  {a_n}, \dots,a_1],[z_n,\dots,z_1])$    (the latter of which follows Zagier's \cite{Zagier1994} order---see also \cite{Goncharov1997,GoncharovManin2004,Brown2009a,Brown2009arXiv,Brown2009b,LalinLechasseur2016,DuhrDulat2019}  for the same convention).   }\begin{align}
\Li_{a_1,\dots,a_n}(z_1,\dots,z_n)\colonequals \sum_{\ell_{1}>\dots>\ell_{n}>0}\prod_{j=1}^n\frac{z_{j}^{\ell_{j}}}{\ell_j^{a_j}},
\label{eq:Mpl_defn}\end{align}where $ a_1,\dots,a_n$ are positive integers and   $ z_1,\dots,z_n$ are third roots of unity (see Table  \ref{tab:mk_eval_W3} for the first few), with $(a_1, z_1) \neq (1, 1)$ to ensure convergence \cite[(0.1)]{Deligne2010}. Meanwhile,  $ \mathbb Q$-linear combinations of   Hoffman's (alternating) multiple zeta values \cite{Hoffman1992} (also known as Euler--Zagier numbers \cite{Zagier1994,Waldschmidt2002})\begin{align}\zeta_{ {A_1}, \dots,A_n}\colonequals\Li_{|A_1|,\dots,|A_n|}\left( \frac{A_{1}}{|A_{1}|} ,\dots,\frac{A_{n}}{|A_{n}|}\right),\label{eq:aMZV_defn}\end{align}enter the representations of   $ \pi^2 m_k( 1+x_1+x_2+x_3)$    (see  Table \ref{tab:mk_eval} for examples).

In \S\ref{sec:int_repn_mk}, we verify all the  statements in the last paragraph. On the qualitative side, we do so by linking Broadhurst's formula  \cite[(9)]{Broadhurst2009} for moments of random walks (\S\S\ref{subsec:BroadhurstWn}--\ref{subsec:HankelWn}) to Brown's homotopy-invariant
iterated integrals \cite[\S1.1]{Brown2009b} on the moduli spaces $ \mathfrak M_{0,n}$ of genus-zero curves with $n$ marked points (\S\S\ref{subsec:Zk(6)}--\ref{subsec:Zk(2)}). On the quantitative side, with Panzer's \texttt{HyperInt} package \cite{Panzer2015} that implements Brown's algorithm \cite{Brown2009a,Brown2009arXiv,Brown2009b} as well as  Au's \texttt{MultipleZetaValues} package  \cite{Au2022a} that  reduces  certain special values of multiple polylogarithms, we generate Tables   \ref{tab:mk_eval_W3} and \ref{tab:mk_eval} as stated.

\begin{table}[t] \caption{Selected closed forms for $k$-Mahler measures  $ m_{k}(1+x_1+x_2)$, where $ \omega\colonequals e^{2\pi i/3}$\label{tab:mk_eval_W3}}
\begin{scriptsize}\begin{align*}\begin{array}{r|l}\hline\hline k&m_{k}(1+x_1+x_2)\vphantom{\frac\int\int}\\\hline1&\displaystyle\frac{3}{2\pi}\I \Li_2(\omega)\vphantom{\frac{\frac11}{\int}}\\[8pt]2&\displaystyle-\frac{2\cdot3}{\pi}\I \Li_{2,1}(\omega,1)+\frac{\pi^2}{2^{2}\cdot3^{3}}\\[8pt]3&\displaystyle-\frac{3^{2}\cdot17}{2^{3}\pi}\I\Li_4(\omega)+\frac{2^{2}\cdot3^{2}}{\pi}\I\Li_{2,1,1}(\omega,1,1)+\frac{23\zeta_{3}}{2\cdot3}+\frac{\pi}{2^{3}}\I\Li_2(\omega) \\[8pt]4&\displaystyle -\frac{2^{2}\cdot3^2\cdot5\cdot11}{13\pi}\I \Li_{4,1}(\omega,1)-\frac{3^{3}\cdot7^{2}}{13\pi}\I \Li_{3,2}(\omega,1)-\frac{2^{5}\cdot3^2}{\pi}\I\Li_{2,1,1,1}(\omega,1,1,1)\\[5pt]&\displaystyle{}+2^{3}\cdot3\R \Li_{3,1}(\omega,1)-\pi\I \Li_{2,1}(\omega,1)+\frac{3\zeta_{3}}{\pi}\I \Li_2(\omega)+\frac{3^{2}}{2^{2}}[\I\Li_2(\omega)]^{2}-\frac{5923 \pi^{4}}{2^{4}\cdot 3^{4}\cdot 5\cdot13}\\[5pt]\hline\hline\end{array}\end{align*}\end{scriptsize}
\end{table}

\begin{table}[t] \caption{Selected closed forms for $k$-Mahler measures  $ m_{k}(1+x_1+x_2+x_3)$\label{tab:mk_eval}}
\begin{scriptsize}\begin{align*}\begin{array}{r|l}\hline\hline k&m_{k}(1+x_1+x_2+x_3)\vphantom{\frac\int\int}\\\hline1&\displaystyle\frac{7\zeta_{3}}{2 \pi ^2}\vphantom{\frac{\frac11}{\int}}\\[8pt]2&\displaystyle\frac{2^{2}\cdot3\zeta_{- 3, 1}}{\pi^{2}}+\frac{ \pi ^2}{2^{2}\cdot5}\\[8pt]3&\displaystyle -\frac{3\cdot5\cdot23 \zeta _{5}}{2^{2} \pi ^2}-\frac{2^{2}\cdot3^{3}\zeta_{-{3},1,1}}{\pi ^2}+\frac{61 \zeta_{3}}{2^{3}}\\[8pt]4&\displaystyle-\frac{2^{4}\cdot3^{2}\cdot13\zeta_{- 5, 1}}{\pi ^2}+\frac{2^{4}\cdot3^{4}\zeta_{-{3},1,1,1}}{\pi ^2}-2\cdot3\zeta_{- 3, 1}+\frac{3^{3}\cdot29 \zeta_{3}^2}{\pi ^2}-\frac{1867 \pi ^4}{2^{4}\cdot3\cdot5\cdot7}\\[8pt]{5}&\displaystyle \-\frac{3\cdot5\cdot22973 \zeta_{7}}{2^{2}\cdot17 \pi ^2}+\frac{2^{4}\cdot3^{2}\cdot5^{2}\cdot11 \zeta_{-{5}, 1, 1}}{17 \pi ^2}+\frac{2^{8}\cdot3\cdot5\cdot19 \zeta_{-{3}, 3, 1}}{17 \pi ^2}+\frac{2^{2}\cdot3\cdot5 \zeta_{-{3}, 1} \zeta_{3}}{\pi ^2}\vphantom{\frac{\frac11}{\int}}\\[5pt]&\displaystyle{}-\frac{2^{4}\cdot3^{5}\cdot5 \zeta_{-{3}, 1, 1, 1, 1}}{\pi ^2}-\frac{5^{2}\cdot29\cdot59 \zeta_{5}}{2^{3}\cdot17}+2\cdot3^{2}\cdot5 \zeta_{-{3}, 1, 1}-\frac{26267 \pi ^2 \zeta_{3}}{2^{5}\cdot3\cdot17}\\[8pt]{6}&\displaystyle-\frac{2^{5}\cdot3^{3}\cdot5\cdot317 \zeta_{-{7}, 1}}{17 \pi ^2}+\frac{2\cdot3^{3}\cdot11\cdot29\cdot43 \zeta_{{5}, 3}}{17 \pi ^2}+\frac{3^{4}\cdot5^{3}\cdot11\cdot113 \zeta_{5} \zeta_{3}}{2^{2}\cdot17 \pi ^2}-\frac{2^{5}\cdot3^{4}\cdot5^{2}\cdot11 \zeta_{-{5}, 1, 1, 1}}{17 \pi ^2}\\[5pt]&\displaystyle{}-\frac{2^{9}\cdot3^{3}\cdot5\cdot19 \zeta_{-{3}, 3, 1, 1}}{17 \pi ^2}-\frac{2^{3}\cdot3^{3}\cdot5 \zeta_{-{3}, 1, 1} \zeta_{3}}{\pi ^2}+\frac{2^{5}\cdot3^{7}\cdot5 \zeta_{-{3}, 1, 1, 1, 1, 1}}{\pi ^2}+2^{2}\cdot3^{2}\cdot5\cdot13 \zeta_{-{5}, 1} \\[5pt]&\displaystyle {}-2^{2}\cdot3^{4}\cdot5 \zeta_{-{3}, 1, 1, 1}+\frac{3\cdot11 \pi ^2 \zeta_{-{3}, 1}}{2^{2}}-\frac{3^{3}\cdot5^{2}\cdot1213 \zeta_{3}^2}{2^{3}\cdot17}-\frac{26459029 \pi ^6}{2^{6}\cdot3^{2}\cdot5^{2}\cdot7\cdot17}\\[8pt]\hline\hline\end{array}\end{align*}\end{scriptsize}
\end{table}

In \S\ref{sec:series_Zk(N)},
we extend our methods to several families of infinite series occurring in recent studies of number theory and high energy physics. While these infinite sums may or may not be directly related to hyper-Mahler measures, they can all be expressed algorithmically in terms of multiple polylogarithms at roots of unity. Our automated evaluations provide  either  new patterns for infinite series or  succinct alternatives to existent treatments of the same objects.   \section{Integral representations of $k$-Mahler measures\label{sec:int_repn_mk}}
\subsection{Broadhurst's formula for zeta Mahler measures\label{subsec:BroadhurstWn}}
Consider the zeta Mahler measure  (see \cite[(1.1)]{Akatsuka2009} and \cite[(2.3)]{BSWZ2012})\begin{align}W_{n}(s)\colonequals{}&\int_0^1\D t_1\cdots\,\int_0^1\D t_{n}\left|\sum_{j=1}^ne^{2\pi i t_j}\right|^{s}=\int_0^1\D t_1\cdots\,\int_0^1\D t_{n-1}\,\left|1+\sum_{j=1}^{n-1}e^{2\pi i t_j}\right|^{s},
\end{align}whose $k$-th order derivative at $s=0$ evaluates the $k$-Mahler measure:\begin{align}
\left.\frac{\D^k W_{n}(s)}{\D s^k}\right|_{s=0}=\int_0^1\D t_1\cdots\,\int_0^1\D t_{n-1}\,\,\log^{k}\left|1+\sum_{j=1}^{n-1}e^{2\pi i t_j}\right|\equalscolon m_{k}\left(1+\sum_{j=1}^{n-1}x_{j}\right).\label{eq:dkW_mk}
\end{align}By definition, we have\begin{align}
W_1(s)=1.\label{eq:W1}
\end{align}The evaluation \begin{align}
W_2(s)={}&\frac{2^s \Gamma \left(\frac{1+s}{2}\right)}{\sqrt{\pi } \Gamma \left(1+\frac{s}{2}\right)}\label{eq:W2}
\end{align}can be found in \cite[Theorem 1(1)]{Akatsuka2009},  where $ \Gamma(\sigma)\colonequals\int_0^\infty t^{\sigma-1}e^{-t}\D t,\sigma>0$ defines Euler's gamma function. In \cite[Theorem 14]{KurokawaLalinOchiai2008} and \cite[(25)]{BNSW2011},  the quantity $W_2(s)$ was written as an equivalent binomial coefficient $ \binom{s}{s/2}$.

In the next lemma, we recapitulate Broadhurst's integral representation \cite[(9)]{Broadhurst2009} of $ W_n(s)$,  which will be instrumental in  the rest of this section.

\begin{lemma}[Broadhurst's formula for $ W_n(s)$]For $ n\in\mathbb Z_{>0}$, the  integral representation\begin{align}
W_n(s)=2^{s+1}\frac{\Gamma\left(1+\frac{s}{2}\right)}{\Gamma\left(-\frac{s}{2}\right)}\int_0^\infty J_0^{n}(x)\frac{\D x}{x^{s+1}}
\end{align} is valid for  $ s\in\left( \max\{-2,-\frac{n}{2}\} ,0\right)$, where      $ J_0(x)\colonequals\sum_{\ell=0}^\infty\frac{(-1)^\ell}{(\ell!)^{2}}\left(\frac x2\right)^{2\ell}$  is the zeroth-order Bessel function of the first kind. \qed \end{lemma}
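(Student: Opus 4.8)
The plan is to recognize $W_n(s)$ as the $s$-th moment of the resultant length $R_n\colonequals\bigl|\sum_{j=1}^n e^{2\pi i t_j}\bigr|$ of a planar random flight with $n$ unit steps whose directions $2\pi t_j$ are independent and uniform, and to feed this moment through a Hankel-transform identity that turns the power $|z|^s$ into a Bessel integral. Concretely, I would first record the Weber--Schafheitlin evaluation $\int_0^\infty J_0(ax)\,x^{-s-1}\D x = a^{s}\,2^{-s-1}\Gamma(-\tfrac s2)/\Gamma(1+\tfrac s2)$, valid for $-\tfrac32<s<0$ and $a>0$, which rearranges into the pointwise identity
\begin{align*}
|z|^{s} = \frac{2^{s+1}\Gamma\!\left(1+\frac s2\right)}{\Gamma\!\left(-\frac s2\right)}\int_0^\infty J_0(|z|x)\,\frac{\D x}{x^{s+1}},\qquad z\in\mathbb C\setminus\{0\}.
\end{align*}

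Next I would substitute $z=1+\sum_{j=1}^{n-1}e^{2\pi i t_j}$ (equivalently $\sum_{j=1}^{n}e^{2\pi i t_j}$), integrate over the torus $[0,1]^{n}$, and interchange the torus integration with the $x$-integration. The decisive simplification is the collapse of the inner average of the Bessel kernel: writing $J_0(|z|x)=\frac1{2\pi}\int_0^{2\pi}\exp\!\bigl(ix\sum_{j}\cos(2\pi t_j+\theta)\bigr)\D\theta$ via $J_0(\rho)=\frac1{2\pi}\int_0^{2\pi}e^{i\rho\cos\theta}\D\theta$ and applying Fubini, each $t_j$-integral factors into $\int_0^1 e^{ix\cos(2\pi t_j+\theta)}\D t_j=J_0(x)$ independently of $\theta$, so that $\int_{[0,1]^n}J_0(|z|x)\,\D t_1\cdots\D t_n=[J_0(x)]^{n}$. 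Pulling out the $s$-dependent $\Gamma$-prefactor then yields exactly Broadhurst's formula.

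The final task is to pin down the admissible range and to justify the interchange, and this is where the main difficulty lies, because the Hankel integral above converges only conditionally (its integrand decays like $x^{-s-3/2}$ at infinity), so Fubini does not apply verbatim. I would handle this by inserting a Gaussian regulator $e^{-\varepsilon x^{2}}$, performing the now absolutely convergent manipulations, and passing to the limit $\varepsilon\to0^{+}$ by dominated control. Tracking the three convergence requirements of the resulting integral $\int_0^\infty[J_0(x)]^n x^{-s-1}\D x$ --- integrability at $x=0$ forces $s<0$; the decay $[J_0(x)]^n\sim x^{-n/2}$ at infinity forces absolute convergence when $s>-\tfrac n2$; and finiteness of $\Gamma(1+\tfrac s2)$ forces $s>-2$ --- reproduces precisely the interval $s\in(\max\{-2,-\tfrac n2\},0)$ stated in the lemma. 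Since both sides are holomorphic in $s$ on this strip, I would prove the identity first on the narrower subinterval $(-\tfrac32,0)$, where the pointwise Hankel identity is literally valid, and then extend to the full range by analytic continuation.
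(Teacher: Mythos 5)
The paper offers no proof of this lemma at all: it is stated as a recapitulation of Broadhurst's formula, with a citation to \cite[(9)]{Broadhurst2009} standing in for the argument, so there is no internal proof to compare against. Your derivation is correct and is essentially the classical argument underlying that citation: the Mellin transform $\int_0^\infty J_0(ax)\,x^{-s-1}\,\D x=a^s2^{-s-1}\Gamma(-\frac s2)/\Gamma(1+\frac s2)$ (valid for $-\frac32<s<0$), followed by Kluyver's averaging trick $\int_{[0,1]^n}J_0(|z|x)\,\D t_1\cdots\D t_n=J_0^n(x)$ via $J_0(\rho)=\frac1{2\pi}\int_0^{2\pi}e^{i\rho\cos\theta}\,\D\theta$, with the convergence bookkeeping (integrability at $0$ forcing $s<0$, the envelope $|J_0(x)|^n=O(x^{-n/2})$ forcing $s>-\frac n2$, and the pole of $\Gamma(1+\frac s2)$ at $s=-2$) reproducing exactly the stated interval. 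The only step left at sketch level is the uniform-in-$\varepsilon$ domination of the Gaussian-regulated oscillatory integral over the torus, which needs a bound of the form $C(|z|^s+1)$ together with integrability of $|z|^s$ over $[0,1]^n$ (itself the source of the $s>-2$, resp.\ $s>-1$ for $n=2$, constraint); also note that your final analytic-continuation step is genuinely needed only for $n\geq4$, since for $n\leq3$ the stated interval already lies inside $(-\frac32,0)$.
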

 \begin{remark}For $ n\in\{1,2\}$,  Broadhurst's integral representations are compatible with \eqref{eq:W1} and \eqref{eq:W2}, as noted by Borwein--Straub--Wan \cite[Example 2.2]{BSW2013}. For $n=2$, one may also check this compatibility against  a critical case of the Weber--Schafheitlin integral \cite[\S13.41(2)]{Watson1944Bessel}. \eor\end{remark}
\subsection{Some integral formulae involving Bessel functions\label{subsec:HankelWn}}
In this subsection, we will reformulate Broadhurst's integral representations of  $ W_3(s)$ and $ W_4(s)$ through the Parseval--Plancherel identity for  Hankel transforms  \cite[\S14.3(3)]{Watson1944Bessel} \begin{align}\begin{split}&
\frac12\int_0^\infty\left[ \int_0^\infty J_\nu\left(\sqrt{u}x\right)f(x)x\D x \right]\left[  \int_0^\infty J_\nu\left(\sqrt{u}x\right)g(x)x\D x \right]\D u\\={}&\int_0^\infty f(x)g(x)x\D x,\quad \nu\geq-\frac12,\end{split}\label{eq:PP_Jnu}
\end{align}where $ J_\nu(X)\colonequals\sum_{\ell=0}^\infty\frac{(-1)^\ell}{\ell!\Gamma(\nu+\ell+1)}\left(\frac X2\right)^{2\ell+\nu}$ is the Bessel function of order $\nu$.
This motivates the study of the integrals in the next proposition.

\begin{proposition}[Weber--Schafheitlin integral and its generalizations]\label{prop:WS}For $ s\in(-2,2)$, we have \begin{align}
\int_0^\infty J_{1+\frac{s}{2}}\left(\sqrt{u}x\right ) \frac{J_{0}(x)}{x^{s/2}}\D x={}&\begin{cases}0, & u\in(0,1),\ \\
\displaystyle\frac{1}{u^\frac{2+s}{4}{\Gamma\left(1+ \frac{s}{2} \right)}}\left(\frac{u-1}{2}\right)^{\frac{s}{2}}, & u\in(1,\infty),
\end{cases}\\[5pt]\int_0^\infty J_{1+\frac{s}{2}}\left(\sqrt{u}x\right ) \frac{J_{0}^{2}(x)}{x^{s/2}}\D x={}&\begin{cases}\displaystyle\int_0^{2\arcsin\smash{\frac{\sqrt{u}}{2}}}\frac{\smash[t]{\left(u-4\sin^2\smash{\frac{\phi}{2}}\right)^{\frac{s}{2}}}}{2^{^{\frac{s}{2}}}\pi u^{^\frac{2+s}{4}}\Gamma\left(1+ \frac{s}{2} \right)}\D\phi, & u\in(0,4),\ \\
\displaystyle\int_0^{\pi}\frac{\left(u-4\sin^2\smash{\frac{\phi}{2}}\right)^{\frac{s}{2}}}{2^{^{\frac{s}{2}}}\pi u^{^\frac{2+s}{4}}\smash{\Gamma\left(1+ \frac{s}{2} \right)}}\D\phi, & u\in(4,\infty).
\end{cases}{}&
\end{align}\end{proposition}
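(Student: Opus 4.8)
The plan is to derive the first identity as a specialization of the discontinuous Weber--Schafheitlin integral \cite[\S13.4]{Watson1944Bessel}, and then to bootstrap the second from it. Writing the first integrand as $J_{1+s/2}(\sqrt u\,x)\,J_0(x)\,x^{-s/2}$, I read off the Weber--Schafheitlin data: order $\mu=1+\tfrac s2$ and argument-scale $\sqrt u$ for the first factor, order $0$ and scale $1$ for the second, and exponent $\lambda=\tfrac s2$; the hypotheses $\Re\lambda>-1$ and $\Re(\mu+1)>\Re\lambda$ hold throughout $s\in(-2,2)$. For $u\in(0,1)$ the larger scale is carried by $J_0$, and the closed form for this branch contains the factor $1/\Gamma\!\big(\tfrac{-\mu+0+\lambda+1}2\big)=1/\Gamma(0)=0$, which annihilates the integral. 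For $u\in(1,\infty)$ the other branch applies, its Gauss hypergeometric factor collapsing through the elementary degeneration ${}_2F_1\!\big(1,-\tfrac s2;1;\tfrac1u\big)=\big(1-\tfrac1u\big)^{s/2}$; assembling the gamma prefactors and powers of $u$ then yields precisely $\tfrac1{u^{(2+s)/4}\Gamma(1+s/2)}\big(\tfrac{u-1}2\big)^{s/2}$.

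A one-line rescaling $x\mapsto x/c$ (for $c>0$) upgrades this to $\int_0^\infty J_{1+s/2}(\sqrt u\,x)\,J_0(cx)\,x^{-s/2}\,\D x$, which vanishes for $u<c^2$ and equals $\tfrac{(u-c^2)^{s/2}}{2^{s/2}u^{(2+s)/4}\Gamma(1+s/2)}$ for $u>c^2$; indeed it is merely the first identity evaluated at $u/c^2$, with the stray powers of $c$ cancelling. For the second identity I would substitute the product formula $J_0^2(x)=\tfrac2\pi\int_0^{\pi/2}J_0(2x\cos\theta)\,\D\theta$ (Neumann's integral \cite[\S5.43]{Watson1944Bessel}), exchange the order of integration, and apply the scaled identity with $c=2\cos\theta$. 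The support threshold $u>4\cos^2\theta$ then selects the contributing angles: all of $(0,\tfrac\pi2)$ when $u>4$, but only $\theta\in(\arccos\tfrac{\sqrt u}2,\tfrac\pi2)$ when $u\in(0,4)$. Finally the change of variable $\phi=\pi-2\theta$ turns $4\cos^2\theta$ into $4\sin^2\tfrac\phi2$, carries $\tfrac\pi2\mapsto0$ and $\arccos\tfrac{\sqrt u}2\mapsto2\arcsin\tfrac{\sqrt u}2$, and converts the prefactor $\tfrac2\pi$ into $\tfrac1\pi$ via the Jacobian, reproducing the two displayed cases verbatim.

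The principal obstacle is analytic rather than algebraic: for the negative values of $s$ that drive the Hankel-transform applications, each of these Bessel integrals converges only conditionally (its integrand decays like $x^{-1-s/2}$ modulated by an oscillation), so neither the appeal to the tabulated Weber--Schafheitlin value nor the interchange of $\int_0^{\pi/2}$ and $\int_0^\infty$ in the product-formula step is automatic. I would secure both by inserting a regulator $e^{-\epsilon x}$, performing every manipulation under absolute convergence, and then letting $\epsilon\to0^+$ through an Abel-type dominated-convergence argument; the threshold values $u=1$ and $u=4$, where the two argument-scales coincide and convergence fails for $s\le0$, are exactly the points the statement omits. What then remains is pure bookkeeping of the gamma factors together with the verification of the hypergeometric degeneration.
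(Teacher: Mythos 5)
Your proposal is correct, and it differs from the paper only in that the paper gives no derivation at all: its proof is the single citation to Watson \S 13.46(1), which is precisely the three-Bessel generalization of the Weber--Schafheitlin integral that you reconstruct. Your route---specializing the discontinuous Weber--Schafheitlin formula of \S 13.4, where the vanishing branch comes from the factor $1/\Gamma(0)=0$ and the other branch collapses via ${}_2F_1\big(1,-\tfrac{s}{2};1;\tfrac1u\big)=\big(1-\tfrac1u\big)^{s/2}$, then rescaling and averaging against Neumann's integral $J_0^2(x)=\tfrac{2}{\pi}\int_0^{\pi/2}J_0(2x\cos\theta)\,\D\theta$---is in substance the classical derivation lying behind the very formula the paper cites, so you lose nothing and gain self-containedness; your bookkeeping also checks out (the stray powers of $c$ cancel exactly, and $\phi=\pi-2\theta$ sends $\arccos\tfrac{\sqrt u}{2}$ to $2\arcsin\tfrac{\sqrt u}{2}$ with Jacobian $\tfrac12$, turning $\tfrac2\pi$ into $\tfrac1\pi$; sanity checks at $s=0$ reproduce $\int_0^\infty J_1(\sqrt u\,x)J_0(x)\,\D x=u^{-1/2}$ and $\tfrac{2}{\pi\sqrt u}\arcsin\tfrac{\sqrt u}{2}$). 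Your identification of the analytic crux is also right: for $s\le 0$ the integrals converge only conditionally, the $e^{-\epsilon x}$ regulator with an Abelian limit is the standard generalized Lipschitz--Hankel device, and the omitted thresholds $u=1$, $u=4$ are exactly where a non-oscillatory $x^{-1-s/2}$ tail appears. The one point worth making explicit when you pass $\epsilon\to0^+$ inside the $\theta$-integral is that the limiting integrand $(u-4\cos^2\theta)^{s/2}$ remains integrable across the critical angle $\theta_c=\arccos\tfrac{\sqrt u}{2}$, because $u-4\cos^2\theta$ vanishes simply there and $\tfrac s2>-1$; this is the same fact that makes the displayed $\phi$-integrals finite for $u\in(0,4)$, and with it your dominated-convergence step closes without further argument.
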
\begin{proof}See \cite[\S13.46(1)]{Watson1944Bessel}.\end{proof}Armed with the formulae in the proposition above, as well as the observation that \begin{align}\begin{split}&
\int_0^\infty J_{1+\frac{s}{2}}\left(\sqrt{u}x\right ) \frac{J_{0}^{2}(x)-1}{x^{s/2}}\D x\\={}&\int_0^\infty J_{1+\frac{s}{2}}\left(\sqrt{u}x\right ) \frac{J_{0}^{2}(x)}{x^{s/2}}\D x+\int_{0}^\infty\frac{\partial}{\partial x}\frac{J_{\frac{s}{2}}\left(\sqrt{u}x\right )}{\sqrt{u}x^{s/2}}\D x\\={}&\int_0^\infty J_{1+\frac{s}{2}}\left(\sqrt{u}x\right ) \frac{J_{0}^{2}(x)}{x^{s/2}}\D x-\frac{1}{2^{^{\frac{s}{2}}} u^{^\frac{2-s}{4}}\smash{\Gamma\left(1+ \frac{s}{2} \right)}},\end{split}
\end{align} we will apply the Parseval--Plancherel identity \eqref{eq:PP_Jnu} to  $W_3(s)-W_1(s)$ and   $W_4(s)-W_2(s)$, in the corollary below.

\begin{corollary}[Hankel--Broadhurst representations for $ W_3(s)$ and $ W_4(s)$]Set  $ \varrho=e^{\pi i/3}$ and  $\mathsf Q(x)\colonequals \left(1-\frac{x}{\varrho}\right)(1- x\varrho)$.  Define five rational functions\begin{align}
\mathsf A_{3}(t,\tau)\colonequals{}&\frac{\left(1-\frac{t}{\tau}\right)(1-t\tau)}{\mathsf Q(t)},&\mathsf B_3(\tau)\colonequals{}&\frac{\tau^{2}}{(1+\tau)^{2}\mathsf Q(\tau)},&\mathsf C_3(\tau)\colonequals{}&\frac{(1+\tau)^{2}}{3\tau},\label{eq:A3B3C3}\\\mathsf A_4(t,\tau)\colonequals{}&\frac{\left(1-\frac{t}{\tau}\right)(1-t\tau)}{t},&\mathsf B_4(\tau)\colonequals{}&-\frac{\tau}{\mathsf (1-\tau)^{2}}.\label{eq:A4B4}
\end{align}When $ s\in\left(-\frac{1}{2},0\right)$, we have \begin{align}\begin{split}&
W_{3}(s)-1\\={}&\frac{3^{s+\frac{3}{2}}\sin\frac{\pi  s}{2}}{\pi ^2}\int_0^1\left[\int_{-1}^\tau\mathsf A_{3}^{\frac{s}{2}}(t,\tau)\frac{\D t}{\mathsf Q(t)}-\mathsf C_{3}^{\frac{s}{2}}(\tau)\int_{-1}^{1}\frac{\D t}{\mathsf Q(t)}\right]\frac{\tau -1}{\tau +1}\frac{\mathsf B_3^{\frac{s}{2}}(\tau)\D\tau}{\mathsf Q(\tau)}\\&{}+\frac{3^{s+\frac{3}{2}}\sin\frac{\pi  s}{2}}{\pi ^2}\int_1^\varrho\left\{\int_{-1}^1\left[\mathsf A_{3}^{\frac{s}{2}}(t,\tau) -\mathsf C_{3}^{\frac{s}{2}}(\tau)\right]\frac{\D t}{\mathsf Q(t)}\right\}\frac{\tau -1}{\tau +1}\frac{\mathsf B_3^{\frac{s}{2}}(\tau)\D\tau}{\mathsf Q(\tau)},\end{split}\label{eq:W3_per}
\end{align}where all the paths of integration  are straight line segments; when $ s\in\left(-1,0\right)$, we have {\small\begin{align}\begin{split}&
W_{4}(s)-\frac{2^s \Gamma \left(\frac{1+s}{2}\right)}{\sqrt{\pi } \Gamma \left(1+\frac{s}{2}\right)}\\={}&\frac{ \sin\frac{\pi  s}{2}}{\pi ^3}\curvearrowleft\kern-1.15em\int_{1}^{-1}\left[\curvearrowleft\kern-1.15em\int_{1}^{\tau}\mathsf A_{4}^{\frac{s}{2}}(t,\tau)\frac{\D t}{t}\right]\left[ \curvearrowleft\kern-1.15em\int_{1}^{\tau}\mathsf A_{4}^{\frac{s}{2}}(T,\tau)\frac{\D T}{T}-\pi i\mathsf B_4^{-\frac{s}{2}}(\tau)\right]\frac{\tau +1}{\tau -1}\frac{\mathsf B_4^{\frac{s}{2}}(\tau)\D\tau}{\tau}\\&{}+\frac{\sin\frac{\pi  s}{2}}{\pi ^3}\int_{-1}^{0}\left[\curvearrowleft\kern-1.15em\int_{1}^{-1}\mathsf  A_{4}^{\frac{s}{2}}(t,\tau)\frac{\D t}{ t}\right]\left\{ \curvearrowleft\kern-1.15em\int_{1}^{-1}\left[\mathsf  A_{4}^{\frac{s}{2}}(T,\tau)-\mathsf B_4^{-\frac{s}{2}}(\tau)\right]\frac{\D T}{T}\right\}\frac{\tau +1}{\tau -1}\frac{\mathsf B_4^{\frac{s}{2}}(\tau)\D\tau}{\tau},\end{split}
\end{align}}where the  path of integration for each $^{_{_\curvearrowleft}}\kern-.9em\int$ runs counterclockwise along the unit circle, and the path of integration for $\tau$ from $ -1$ to $0$ is a  straight line segment.\end{corollary}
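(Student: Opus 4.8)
The plan is to deduce both representations from Broadhurst's Lemma by pairing it with the Weber--Schafheitlin evaluations of Proposition~\ref{prop:WS} through the Hankel Parseval--Plancherel identity \eqref{eq:PP_Jnu}. First I recast Broadhurst's prefactor with the reflection formula $\Gamma\left(-\frac s2\right)\Gamma\left(1+\frac s2\right)=-\pi/\sin\frac{\pi s}{2}$, writing $W_n(s)=-\frac{2^{s+1}\sin\frac{\pi s}{2}}{\pi}\Gamma^2\left(1+\frac s2\right)\int_0^\infty J_0^n(x)\frac{\D x}{x^{s+1}}$; this is the source of the $\sin\frac{\pi s}{2}$ in the statement. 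Subtracting $W_1(s)=1$ and $W_2(s)$, both realized by Broadhurst's Lemma as the $n=1,2$ Bessel integrals (cf.\ \eqref{eq:W1}, \eqref{eq:W2} and the Remark), I factor $J_0^3-J_0=J_0\left(J_0^2-1\right)$ and $J_0^4-J_0^2=J_0^2\left(J_0^2-1\right)$. These subtractions serve two purposes: the extra factor $J_0^2-1$ renders the remaining Bessel integrals absolutely convergent precisely on $s\in\left(-\frac12,0\right)$ and $s\in(-1,0)$ respectively (the large-$x$ law $J_0(x)\sim\sqrt{2/(\pi x)}\cos(\cdots)$ forces the thresholds $s>-\frac12$ and $s>-1$, giving exactly the stated $s$-intervals), and it aligns with the regularized transform recorded in the displayed observation preceding the statement.

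Next I apply \eqref{eq:PP_Jnu} with $\nu=1+\frac s2$ to each product, taking $f(x)=J_0(x)/x^{1+s/2}$ or $J_0^2(x)/x^{1+s/2}$ and $g(x)=\left(J_0^2(x)-1\right)/x^{1+s/2}$, so that $f(x)g(x)x$ reproduces the integrand and the two bracketed Hankel transforms are exactly those supplied by Proposition~\ref{prop:WS} (for the $J_0$- and $J_0^2$-factors) and by the displayed observation (for the $J_0^2-1$ factor, whose tail $-1$ contributes $-\bigl[2^{s/2}u^{(2-s)/4}\Gamma\left(1+\frac s2\right)\bigr]^{-1}$ via $\int_0^\infty J_{1+s/2}(\sqrt ux)x^{-s/2}\D x$). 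The two denominator factors $\Gamma\left(1+\frac s2\right)$ cancel the $\Gamma^2\left(1+\frac s2\right)$ from Broadhurst's prefactor, leaving the rational $\pi$-powers of the statement. Since the $J_0$-transform vanishes on $u\in(0,1)$ while the $J_0^2$-transform splits at $u=4$, the $u$-integral breaks into $u\in(1,4)$ and $u\in(4,\infty)$ for $W_3$, and into $u\in(0,4)$ and $u\in(4,\infty)$ for $W_4$; these are the two summands in each displayed formula.

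It remains to rationalize. I introduce an outer substitution $u\mapsto\tau$ and an inner one $4\sin^2\frac\phi2\mapsto t$ (equivalently $\D\phi=\D w/\sqrt{w(4-w)}$ with $w=4\sin^2\frac\phi2$), engineered so that $\mathsf Q(x)=1-x+x^2=(1-x/\varrho)(1- x\varrho)$, whose roots are the primitive sixth roots of unity, appears in the Jacobians, the factor $\left(u-4\sin^2\frac\phi2\right)^{s/2}$ becomes $\mathsf A_\bullet^{s/2}(t,\tau)$, the outer power $(u-1)^{s/2}u^{-(2+s)/4}$ (resp.\ its $J_0^2$-analogue) becomes $\mathsf B_\bullet^{s/2}(\tau)$ up to rational factors, and the subtracted constant becomes the $\mathsf C_3^{s/2}(\tau)$-term (resp.\ the $\mathsf B_4^{-s/2}(\tau)$-term); the endpoints $t=-1,1,\tau$ and $\tau=0,1,\varrho$ are the images of $\phi=0,\pi$ and of the branch points $u=1,4$. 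For $W_3$ every image contour is a straight segment and all fractional powers remain single-valued, so that case is pure bookkeeping.

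The hard part is $W_4$, where both Hankel factors are of $J_0^2$-type, so the $u\in(0,4)$ branch carries $\arcsin\frac{\sqrt u}2\in\left(0,\frac\pi2\right)$ and is pushed, under the rationalizing map, onto the upper unit semicircle; this is what forces the counterclockwise arc integrals $\curvearrowleft\!\int$ and demands careful tracking of the branch of each $\mathsf A_4^{s/2},\mathsf B_4^{s/2}$ along the circle so that they stay single-valued and consistent with the real determination on $(-1,0)$. The term $-\pi i\,\mathsf B_4^{-s/2}(\tau)$ is precisely the phase acquired by the regularizing constant $-\bigl[2^{s/2}u^{(2-s)/4}\Gamma\left(1+\frac s2\right)\bigr]^{-1}$ when $u^{(2-s)/4}$ is continued across $u=4$ onto the circle. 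I therefore expect the principal obstacle to be the contour-deformation and branch-cut accounting for $W_4$ --- verifying that the Parseval integral over $u\in(0,\infty)$ deforms into the stated unit-circle-plus-segment combination without crossing singularities and that the limiting boundary phases produce the $\pm\pi i$ and the $\frac{\tau\pm1}{\tau\mp1}$ factors --- together with the preliminary justification of the Fubini/Parseval step itself, which I would secure through the regularized, absolutely convergent form valid on the stated $s$-intervals rather than an $L^2$ pairing, since $\left(J_0^2-1\right)/x^{1+s/2}\notin L^2(x\,\D x)$ for $s<0$.
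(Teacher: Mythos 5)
Your proposal is correct and takes essentially the same route as the paper's proof: the paper likewise applies the Parseval--Plancherel identity \eqref{eq:PP_Jnu} to $W_3(s)-W_1(s)$ and $W_4(s)-W_2(s)$, pairing Proposition \ref{prop:WS} with the $J_0^2-1$ observation, and then rationalizes via $\phi=2\arctan\frac{1+t}{\sqrt{3}(1-t)}$, $u=\frac{(1+\tau)^{2}}{1-\tau+\tau^{2}}$ for $W_3$ and $\phi=\arg t$, $u=-\frac{(1-\tau)^{2}}{\tau}$ for $W_4$ --- exactly the maps you specify implicitly through $4\sin^2\frac{\phi}{2}=\frac{(1+t)^2}{\mathsf Q(t)}$, resp.\ $-\frac{(1-t)^2}{t}$ on $|t|=1$. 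Your convergence thresholds $s>-\frac12$ and $s>-1$ and the branch-tracking you flag for $W_4$ (including the origin of the $-\pi i\,\mathsf B_4^{-s/2}(\tau)$ term) are precisely the details the paper's two-sentence proof leaves tacit.
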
\begin{proof}For $ W_3(s)$,  make the variable substitutions $
\phi=2\arctan\frac{1+t}{\sqrt{3}(1-t)}$ \big[which satisfies $ 4\sin^2\frac{\phi}{2}=\frac{(1+t)^2}{1-t+t^{2}}$\big] and $u=\frac{(1+\tau)^{2}}{1-\tau+\tau^{2}}
$ after invoking \eqref{eq:PP_Jnu}. For  $ W_4(s)$,   use $ \phi=\arg t$ \big[which satisfies $ 4\sin^2\frac{\phi}{2}=-\frac{(1-t)^2}{t}$ for $ |t|=1$\big] and $ u=-\frac{(1-\tau)^2}{\tau}$ instead.\end{proof}
\subsection{Period structure of $ m_k(1+x_1+x_2)$\label{subsec:Zk(6)}}
As a variation on Goncharov's notation   \cite[\S1.2]{Goncharov1998}, we define  $ \mathbb Q$-vector spaces\footnote{In this article, we write ``$\cdots\in\mathbb C $'' (as opposed to ``$ \cdots\in\mathbb C\cup\{\infty\}$'') to indicate that the series or integral in question is convergent. Thus, the condition of convergence  $(a_1, z_1) \neq (1, 1)$ for  \eqref{eq:Mpl_defn} is imposed tacitly here, despite being absent from the predicate in \eqref{eq:Zk(N)_defn}.}\begin{align}
\mathfrak Z_{ k}(N)\colonequals\Span_{\mathbb Q}\left\{\Li_{a_1,\dots,a_n}(z_1,\dots,z_n)\in\mathbb C\left|\begin{smallmatrix}a_1,\dots,a_n\in\mathbb Z_{>0}\\z_{1}^{N}=\dots=z_n^N=1\\\sum _{j=1}^{n}a_{j}=k\end{smallmatrix}\right. \right\}\label{eq:Zk(N)_defn}
\end{align}for positive integers $k$ and $N$. With the understanding that empty sums are zero (so  $ \Span_{\mathbb Q}\varnothing=\{0\}$), we have $ \mathfrak Z_1(1)=\{0\}$.
Retroactively, we set $ \mathfrak Z_0(N)\colonequals\mathbb Q$.

A number $ \Li_{a_1,\dots,a_n}(z_1,\dots,z_n)\in \mathfrak Z_{ k}(N)$ is referred to as a cyclotomic multiple zeta value (CMZV) of weight $k$ and level $N$ \cite[(1)]{SingerZhao2020}. Some authors \cite{SingerZhao2020,Au2022a} write $ \mathsf{CMZV}_{k,N}$ or $ \mathsf{CMZV}_k^N$ for $ \mathfrak Z_k(N)$. At level 1, a CMZV  $ \zeta_{a_1,\dots,a_n}=\Li_{a_1,\dots,a_n}(1,\dots,1)\in\mathfrak Z_{a_1+\dots+a_n}(1)$ is simply a multiple zeta value (MZV); at level 2, a CMZV becomes an alternating multiple zeta value (AMZV).

It is clear from the definition \eqref{eq:Zk(N)_defn} that we have a set inclusion $ \mathfrak Z_k(N)\subseteq\mathfrak Z_k(M)$ if $N$ divides $M$. Furthermore, we have an elementary fact that (see \cite[Remark 1.3]{SingerZhao2020} or \cite[Lemma 4.1]{Au2022a}) \begin{align}
\pi i=\frac{N}{N-2}\left[ \Li_1(e^{2\pi i/N })-\Li_1(e^{-2\pi i/N }) \right]\in\mathfrak Z_1(N)\label{eq:piIZ1}
\end{align}when $ N\in\mathbb Z_{\geq3}$.

For each fixed $N$, the $ \mathbb Q$-algebra $ \mathfrak Z(N)\colonequals \sum_{k\in\mathbb Z_{\geq0}}\mathfrak Z_{ k}(N)$ is filtered by weight    \cite[\S1.2]{Goncharov1998}:\begin{align}\mathfrak Z_j(N)\mathfrak
Z_{ k}(N)\subseteq \mathfrak
Z_{j+ k}(N),\quad \text{for all } j,k\in \mathbb Z_{\geq0}.\label{eq:Zk(N)_filt}
\end{align}
In what follows, we will refer to members of $ \mathfrak Z_{ k}(N)$ as Goncharov--Deligne periods,\footnote{Instead of  pointing to Kontsevich--Zagier periods \cite{KontsevichZagier} or  Deligne periods \cite{Deninger1997}  in a broad sense, our discussions of ``period structure'' in the rest of this article will pertain exclusively to Goncharov--Deligne periods.   } in honor of the pioneers  \cite{Goncharov1998,DeligneGoncharov2005,Deligne2010} who worked on them.\footnote{It is worth mentioning that the provable upper bound for   $ \dim_{\mathbb Q}\mathfrak Z_k(N)$ in the joint work of Deligne and Goncharov \cite[Corollaire 5.25]{DeligneGoncharov2005} provides guidance to Au's software \cite[Theorem 5.5]{Au2020} for symbolic reductions of CMZVs.}\begin{theorem}[$ \pi i\,m_k(1+x_1+x_2)$ as Goncharov--Deligne periods]\label{thm:Zk(6)}For $ n\in\mathbb Z_{\geq0}$, define \begin{align}
\mathsf X_3^{(n)}(\tau)\colonequals{}&\int_{-1}^\tau\frac{\log^{n} \mathsf A_{3}(t,\tau)\D t}{\mathsf Q(t)}-\frac{\pi\log^{n}\mathsf C_{3}(\tau)}{\sqrt{3}},&\mathsf Y_3^{(n)}(\tau)\colonequals{}&\frac{\tau -1}{\tau +1}\frac{\log^{n}\mathsf B_3(\tau)}{\mathsf Q(\tau)},\\\mathsf \Xi_3^{(n)}(\tau)\colonequals{}&\int_{-1}^1\left[\log^{n} \mathsf A_{3}(t,\tau)-\log^{n}\mathsf C_{3}(\tau)\right]\frac{\D t}{\mathsf Q(t)},
\end{align}using the rational functions in \eqref{eq:A3B3C3}. For each positive integer $k$, we have \begin{align}\begin{split}&
\pi m_k(1+x_1+x_2)\\\in{}&\Span_{\mathbb Q}\left\{\frac{\sqrt{3}\log^{n_{1}}3}{\pi^{-2n_2}}\left. \int_0^1\mathsf X_3^{(n_{3})}(\tau)\mathsf Y_3^{(n_{4})}(\tau)\D\tau\right| \begin{smallmatrix}n_1,n_2,n_3,n_4\in\mathbb Z_{\geq0}\\n_1+(2n_2+1)+n_3+n_4=k\end{smallmatrix}\right\}\\&{}+\Span_{\mathbb Q}\left\{\frac{\sqrt{3}\log^{n_{1}}3}{\pi^{-2n_2}}\left. \int_1^\varrho\mathsf \Xi_3^{(n_{3})}(\tau)\mathsf Y_3^{(n_{4})}(\tau)\D\tau\right| \begin{smallmatrix}n_1,n_2,n_3,n_4\in\mathbb Z_{\geq0}\\n_1+(2n_2+1)+n_3+n_4=k\end{smallmatrix}\right\}\\\subseteq {}&i\mathfrak Z_{k+1}(6).\end{split}\label{eq:mk(1+x+y)}
\end{align}\end{theorem}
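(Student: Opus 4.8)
The plan is to differentiate the Hankel--Broadhurst representation of $W_3(s)-1$ from the preceding corollary $k$ times at $s=0$, to read off $m_k(1+x_1+x_2)$ through \eqref{eq:dkW_mk}, and then to recognize the resulting $\tau$-integrals as homotopy-invariant iterated integrals whose singularities all sit at sixth roots of unity.

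First I would establish $\pi m_k(1+x_1+x_2)\in\Span_{\mathbb Q}\{\cdots\}$ by a Leibniz expansion in $s$. Writing the scalar prefactor as $\frac{3^{s+3/2}\sin(\pi s/2)}{\pi^2}=\frac{3\sqrt3}{\pi^2}\bigl(\sum_{m\geq0}\frac{(\log3)^m}{m!}s^m\bigr)\bigl(\sum_{j\geq0}\frac{(-1)^j(\pi/2)^{2j+1}}{(2j+1)!}s^{2j+1}\bigr)$, I note that only odd powers $\pi s$ survive from the sine, which explains the ``$2n_2+1$'' in the index constraint and the net power $\pi^{2j-1}$ (turning into $\pi^{2j}$ once the whole identity is multiplied by $\pi$). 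Inside the integrals I would expand $\mathsf A_3^{s/2}=\sum_{p}\frac{\log^p\mathsf A_3}{2^pp!}s^p$ and likewise for $\mathsf B_3^{s/2}$ and $\mathsf C_3^{s/2}$, using the elementary evaluation $\int_{-1}^1\frac{\D t}{\mathsf Q(t)}=\frac{\pi}{\sqrt3}$ coming from $\mathsf Q(t)=1-t+t^2$. Matching the coefficient of $s^p$ inside the bracket with $\frac1{2^pp!}\mathsf X_3^{(p)}(\tau)$ and of $s^q$ in the trailing factor with $\frac1{2^qq!}\mathsf Y_3^{(q)}(\tau)$, the coefficient of $s^k$ (times $k!$) becomes exactly a $\mathbb Q$-combination of the two families displayed in \eqref{eq:mk(1+x+y)}, the $\int_0^1$ and $\int_1^\varrho$ pieces producing the $\mathsf X_3,\mathsf Y_3$ and $\mathsf\Xi_3,\mathsf Y_3$ spans. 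Justifying term-by-term differentiation needs only that $W_3$ is analytic near $s=0$ and that the integrals converge locally uniformly in $s$, so the corollary supplies the genuine Taylor coefficients.

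The substantive step is the inclusion $\Span_{\mathbb Q}\{\cdots\}\subseteq i\mathfrak Z_{k+1}(6)$. I would expose the sixth-root arithmetic by partial fractions: $\frac{\D t}{\mathsf Q(t)}=\frac1{i\sqrt3}\bigl[\D\log(t-\varrho)-\D\log(t-\bar\varrho)\bigr]$, whereas $\frac{\tau-1}{\tau+1}\frac{\D\tau}{\mathsf Q(\tau)}=\frac{\tau-1}{\tau^3+1}\D\tau$ has only rational residues $-\tfrac23,\tfrac13,\tfrac13$ at $-1,\varrho,\bar\varrho$. Since $\log\mathsf A_3,\log\mathsf B_3,\log\mathsf C_3$ split into $\mathbb Q$-combinations of $\log(t-\zeta),\log(\tau-\zeta)$ with $\zeta^6=1$, together with $\log\tau$ and $\log3$, one finds $\sqrt3\,\mathsf X_3^{(n_3)}(\tau)=-i\,\widetilde X(\tau)-\pi\log^{n_3}\mathsf C_3(\tau)$, where $\widetilde X$ is a pure iterated integral of $\D\log$-forms with sixth-root singularities, while $\mathsf Y_3^{(n_4)}(\tau)\D\tau$ is a $\mathbb Q$-combination of $\log^{n_4}\mathsf B_3\cdot\D\log(\tau-\zeta)$ (and $\mathsf\Xi_3$ is treated identically). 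Brown's framework then identifies each $\int_0^1$ and $\int_1^\varrho$ — paths joining the sixth roots $0,1,\varrho$ — with $\mathbb Q$-linear combinations of multiple polylogarithms at sixth roots of unity, i.e.\ members of $\mathfrak Z_\bullet(6)$. The total weight is $k+1$: in the $-i\widetilde X$ part one collects $\log^{n_1}3$ (weight $n_1$, since $\log3=-\Li_1(\omega)-\Li_1(\bar\omega)\in\mathfrak Z_1(6)$), $\pi^{2n_2}=(6\zeta_2)^{n_2}$ (weight $2n_2$) and an integral of weight $(n_3+1)+(n_4+1)$, summing to $k+1$, with the lone $\frac1{i\sqrt3}$ cancelled by $\sqrt3$ to leave $-i$; in the $-\pi\log^{n_3}\mathsf C_3$ part the integral has weight one lower, but $\pi\in i\mathfrak Z_1(6)$ restores it, again giving $k+1$. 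The filtration \eqref{eq:Zk(N)_filt} and $i^2=-1\in\mathbb Q$ absorb all products, and an odd number of $i$-factors survives, placing every spanning element in $i\mathfrak Z_{k+1}(6)$.

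The main obstacle is the reduction of these path-ordered integrals to honest level-$6$ CMZVs. One must regularize at the singular endpoints — the pole of $\log\tau$ at $\tau=0$ and the form singularities at $\tau=\pm1,\varrho$ — via tangential base points, deform the prescribed straight segments and unit-circle arcs to admissible paths avoiding the singularities, and confirm that integrating $\log^{n}\mathsf A_3(t,\tau)$ in $t$ and re-expanding in $\tau$ introduces no denominators beyond the sixth roots of unity, so that the level never exceeds $6$. All branch-dependent constants generated along the way are rational multiples of $\pi i\in\mathfrak Z_1(6)$ through \eqref{eq:piIZ1}, hence preserve both the level and the parity of the $i$-factors. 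This homotopy-invariance is exactly what Brown's theory of iterated integrals on $\mathfrak M_{0,n}$ supplies; granting it, the Leibniz step and the weight--level bookkeeping are routine.
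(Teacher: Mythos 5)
Your proposal is correct and follows essentially the same route as the paper: the ``$\in$'' part via Leibniz expansion of the Hankel--Broadhurst representation \eqref{eq:W3_per} at $s=0$ (with the $\sin\frac{\pi s}{2}$ factor producing the odd $\pi$-powers and $\int_{-1}^{1}\frac{\D t}{\mathsf Q(t)}=\frac{\pi}{\sqrt{3}}$ accounting for the $\mathsf C_3$ subtraction), and the ``$\subseteq$'' part via the same partial-fraction decompositions $\frac{1}{\mathsf Q(t)}=\frac{1}{i\sqrt{3}}\bigl(\frac{1}{t-\varrho}-\frac{1}{t-1/\varrho}\bigr)$ and $\frac{\tau-1}{(\tau+1)\mathsf Q(\tau)}$, followed by Brown's fibration/regularization machinery with identical weight--level bookkeeping ($\log 3\in\mathfrak Z_1(6)$, $\pi^2\in\mathfrak Z_2(6)$, $\pi i\in\mathfrak Z_1(6)$). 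The paper handles the $\int_1^\varrho$ endpoint slightly more explicitly, via the GPL scaling property $G(\varrho\bm\alpha;\varrho z)=G(\bm\alpha;z)$ and the inclusion $\mathfrak g^{(\varrho)}_{k+1}(6)\subseteq\mathfrak G^{(1)}_{k+1}(6)$ from $G\bigl(\frac{1}{\varrho};1\bigr)=-\frac{\pi i}{3}$, but this is the same mechanism you invoke through tangential base points and path deformation.
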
\begin{proof}The ``$ \in$'' part is a direct consequence of \eqref{eq:dkW_mk} and \eqref{eq:W3_per}.

To prove the ``$\subseteq$'' part, we follow Brown's algorithmic studies \cite{Brown2009a,Brown2009arXiv,Brown2009b} of
   $  L( \mathfrak M_{0,n}) $, the totality of   homotopy-invariant
iterated integrals \cite[\S1.1]{Brown2009b} on the moduli spaces $ \mathfrak M_{0,n}$ of Riemann spheres with $n$ marked points. Thanks to the filtration property  \eqref{eq:Zk(N)_filt} and the facts that \begin{align}
\log3=-\Li_1(-\varrho)-\Li_1\left( -\frac{1}{\varrho} \right)\in\mathfrak Z_1(6),\quad \pi^2=6\Li_2(1)\in\mathfrak Z_2(6),
\end{align}we may reduce our task to the demonstration of \begin{align}
\left\{\sqrt{3}\int_0^1\mathsf X_3^{(n_{3})}(\tau)\mathsf Y_3^{(n_{4})}(\tau)\D\tau,\sqrt{3}\int_1^\varrho\mathsf \Xi_3^{(n_{3})}(\tau)\mathsf Y_3^{(n_{4})}(\tau)\D\tau\right\}\subset{}&i\mathfrak Z_{k+1}(6)\label{eq:W3_n3n4}
\end{align}for $ n_3,n_4\in\mathbb Z_{\geq0}$ and $ n_3+n_4=k-1$.

Using the notation  of Frellesvig--Tommasini--Wever \cite[\S2]{Frellesvig2016}, we define the generalized polylogarithm\footnote{The GPL $ G(\alpha_1,\dots,\alpha_n;z)$ is implemented in \texttt{Maple}  as $\mathtt{GeneralizedPolylog([\alpha_1,\dots,\alpha_n],z)}$ by Frellesvig \cite{Frellesvig2018Maple}. The same GPL corresponds to $ \mathtt{Hlog(z,[\alpha_1,\dots,\alpha_n])}$ in Panzer's \texttt{HyperInt}
   package \cite{Panzer2015} for \texttt{Maple}, where $ \mathtt{Hlog}$ stands for ``hyperlogarithm'', another name for GPL. We also note that Panzer used ``GPL'' as an abbreviation for ``Goncharov polylogarithm'' \cite[\S1]{Panzer2015}, referring to the same object as our GPL.} (GPL) by the  recursion\begin{align}
G(\alpha_{1},\dots,\alpha_n;z)\colonequals\int_0^z\frac{\D x}{x-\alpha_1}G(\alpha_2,\dots,\alpha_n;x)\label{eq:GPL_rec}
\end{align}  for any  $\bm\alpha=(\alpha_1,\dots,\alpha_n)\neq\bm0\in\mathbb C^n$, along with the additional settings that \begin{align}
G(\underset{n }{\underbrace{0,\dots,0 }};z)\colonequals\frac{\log^nz}{n!},\quad G(-\!\!-;z)\colonequals1.
\end{align}Here, it is always understood that the integration path in \eqref{eq:GPL_rec} is a straight line segment
joining $0$ to $z$.
For convenience, we will use a short-hand $ G({\bm \alpha};z)=G(\alpha_{1},\dots,\alpha_{w(\bm \alpha)};z)$ and refer to $ w(\bm \alpha)$ \big(the number of components in the vector  $ \bm \alpha\in\mathbb C^{w(\bm \alpha)}$\big) as the weight of the GPL  $G({\bm \alpha};z) $.

Set $ \mathfrak G^{(z)}_0(N)=\mathbb Q$ and  $ \mathfrak H^{(z)}_0(N)=\mathbb Q$. For every  $k\in\mathbb Z_{>0}$, we construct two $ \mathbb Q$-vector spaces\footnote{According to Goncharov  \cite[Theorem 2.1]{Goncharov1998}, the space $ \mathfrak G^{(z)}_k(N)$ is spanned by  a special class of GPLs that can be converted to CMZVs, when $z^N=1$. [See also \eqref{eq:MPL_GPL}--\eqref{eq:GPL_MPL} below.] In the space  $ \mathfrak H^{(z)}_k(N)$, one allows  $ \alpha_{k-\ell}=0$,    as in the ``harmonic polylogarithms''   for Ma\^itre's \texttt{HPL} \cite{Maitre2005,Maitre2012} or in the ``hyperlogarithms'' for Panzer's \texttt{HyperInt} \cite{Panzer2015}. }{\begin{align}
\mathfrak G^{(z)}_k(N)\colonequals{}&\Span_{\mathbb Q}\left\{G(\alpha_{1},\dots,\alpha_k;z)\in\mathbb{C}\left | \begin{smallmatrix}\alpha_1^N,\dots,\alpha_{k-1}^{N}\in\{0,1\}\\\alpha_k^N=1\end{smallmatrix}\right.\right\},\label{eq:Gkz(N)_Qvec}\\
\mathfrak H^{(z)}_k(N)\colonequals{}&\Span_{\mathbb Q}\left\{(\pi i)^{\ell}G(\alpha_{1},\dots,\alpha_{k-\ell};z)\in\mathbb{C}\left |\begin{smallmatrix} \alpha_1^N,\dots,\alpha_{k-\ell}^{N}\in\{0,1\}\\0\leq\ell\leq k\end{smallmatrix}\right.\right\}.\label{eq:Hkz(N)_Qvec}\end{align}}The $ \mathbb Q$-algebras $ \mathfrak G^{(z)}(N)\colonequals \sum_{k\in\mathbb Z_{\geq0}}\mathfrak G^{(z)}_k(N)$ and  $ \mathfrak H^{(z)}(N)\colonequals \sum_{k\in\mathbb Z_{\geq0}}\mathfrak H^{(z)}_k(N)$  are  filtered by weight: \begin{align}\mathfrak G^{(z)}_j(N)\mathfrak G^{(z)}_k(N)\subseteq \mathfrak G^{(z)}_{j+k}(N),\quad \mathfrak H^{(z)}_j(N)\mathfrak H^{(z)}_k(N)\subseteq \mathfrak H^{(z)}_{j+k}(N),\end{align} thanks to  a shuffle identity (see \cite[\S5.4]{BorweinBradleyBroadhurstLisonek2001} or \cite[(2.4)]{Frellesvig2016})\begin{align}
G(\alpha_{1},\dots,\alpha_{w_{1}};z)G(\alpha_{w_{1}+1},\dots,\alpha_{w_{1}+w_{2}};z)=\sum_{\sigma\in \text{\cyrins{Ш}} _{w_1,w_2}}G(\alpha_{\sigma(1)},\dots,\alpha_{\sigma(w_{1}+w_{2})}; z),\label{eq:G_shuffle}
\end{align} where $  \text{\cyrins{Ш}} _{w_1,w_2}$ consists of $ \frac{(w_1+w_2)!}{w_1!w_2!}$ permutations $ \sigma\colon\mathbb{Z}\cap[1, w_1+w_2]\longrightarrow\mathbb{Z}\cap[1, w_1+w_2]$ that satisfy $ \sigma^{-1}(m)<\sigma^{-1}(n)$ whenever $1\leq m<n\leq w_1 $ or $ {w_{1}+1}\leq m<n\leq w_1+w_2$.  A multiple polylogarithm (MPL) can be converted to a GPL (see \cite[(1.3)]{Panzer2015} or \cite[(2.5)]{Frellesvig2016}){\small\begin{align} \Li_{a_1,\dots,a_n}(z_1,\dots,z_n)=(-1)^{n}G\left(\smash[b]{\underset{a_1-1 }{\underbrace{0,\dots,0 }}},\frac{1}{z_{1}},\smash[b]{\underset{a_2-1 }{\underbrace{0,\dots,0 }}},\frac{1}{z_{1}z_2},\dots,\smash[b]{\underset{a_n-1 }{\underbrace{0,\dots,0 }}},\frac{1}{\prod_{j=1}^nz_j};1\right)\label{eq:MPL_GPL}\end{align}}when  $ \prod_{j=1}^nz_j\neq0$, and \textit{vice versa} (see \cite[(1.3)]{Panzer2015} or \cite[(2.6)]{Frellesvig2016}){\small\begin{align}
\vphantom{\underset{1}{\underbrace{0}}}G\left(\smash[b]{\underset{a_1-1 }{\underbrace{0,\dots,0 }}},\widetilde \alpha_1,\smash[b]{\underset{a_2-1 }{\underbrace{0,\dots,0 }}},\widetilde \alpha_2,\dots,\smash[b]{\underset{a_n-1 }{\underbrace{0,\dots,0 }}},\widetilde \alpha_n;z\right)=(-1)^n\Li_{a_1,\dots,a_n}\left( \frac{z}{\widetilde \alpha_1} ,\frac{\widetilde \alpha_1}{\widetilde \alpha_2},\dots, \frac{\widetilde \alpha_{n-1}}{\widetilde \alpha_n}\right)\label{eq:GPL_MPL}
\end{align}}when $ \prod_{j=1}^n\widetilde \alpha_j\neq0$.\footnote{The equations \eqref{eq:MPL_GPL} and \eqref{eq:GPL_MPL} can also be used as working  definitions for an MPL when its series form  \eqref{eq:Mpl_defn}  does not converge. } As a result,  we have $ \mathfrak G_{k}^{(1)}(N)= \mathfrak Z_{k}(N)$. Furthermore,  the convergence condition for MPLs can be transferred onto GPLs, allowing us to reformulate \eqref{eq:Gkz(N)_Qvec} and \eqref{eq:Hkz(N)_Qvec} as{\begin{align}
\mathfrak G^{(z)}_k(N)\colonequals{}&\Span_{\mathbb Q}\left\{G(\alpha_{1},\dots,\alpha_k;z)\left | \begin{smallmatrix}\alpha_1^N,\dots,\alpha_{k-1}^{N}\in\{0,1\}\\\alpha_{1}\neq z;\alpha_k^N=1\end{smallmatrix}\right.\right\},\label{eq:Gkz(N)_Qvec'}\tag{\ref{eq:Gkz(N)_Qvec}$'$}\\
\mathfrak H^{(z)}_k(N)\colonequals{}&\Span_{\mathbb Q}\left\{(\pi i)^{\ell}G(\alpha_{1},\dots,\alpha_{k-\ell};z)\left |\begin{smallmatrix} \alpha_1^N,\dots,\alpha_{k-\ell}^{N}\in\{0,1\}\\\alpha_{1}\neq z;0\leq\ell\leq k\end{smallmatrix}\right.\right\}.\label{eq:Hkz(N)_Qvec'}\tag{\ref{eq:Hkz(N)_Qvec}$'$}\end{align}}

To facilitate  further analysis, we  define{\small\begin{subequations}\begin{align}
\mathfrak g_{k}^{(z)}(N)\colonequals{}&\Span_{\mathbb Q}\left\{ G(\bm \alpha;1)G(\bm \beta;z)(\pi i)^{\ell}(\log z)^{k-\ell-w(\bm \alpha)-w(\bm \beta)}\left|\begin{smallmatrix} G(\bm \alpha;1)\in\mathfrak Z_{w(\bm \alpha)}(N)\\G(\bm \beta;z)\in\mathfrak G^{(z)}_{w(\bm \beta)}(N)\\\ell ,w(\bm \alpha),w(\bm \beta)\in\mathbb Z_{\geq0}\\\ell +w(\bm \alpha)+w(\bm \beta)\leq k\end{smallmatrix}\right.\right\}\\\equiv{}&\Span_{\mathbb Q}\left\{ G(\bm \alpha;1)G(\bm \beta;z)(\pi i)^{\ell}\left|\begin{smallmatrix} G(\bm \alpha;1)\in\mathfrak Z_{w(\bm \alpha)}(N)\\G(\bm \beta;z)(\pi i)^{\ell}\in\mathfrak H^{(z)}_{\ell +w(\bm \beta)}(N)\\\ell ,w(\bm \alpha),w(\bm \beta)\in\mathbb Z_{\geq0}\\\ell +w(\bm \alpha)+w(\bm \beta)=k\end{smallmatrix}\right.\right\},\label{eq:gk(N)_GH}
\end{align}\end{subequations}}where the equivalence between two definitions stems from the GPL shuffle algebra \cite[p.\ 24]{DuhrDulat2019}. By the shuffle identity \eqref{eq:G_shuffle}, we have $ \mathfrak g_{j}^{(z)}(N)\mathfrak g_{k}^{(z)}(N)\subseteq \mathfrak g_{j+k}^{(z)}(N)$  for $ j,k\in\mathbb Z_{\geq0}$. Combining \eqref{eq:gk(N)_GH} with the recursive definition of GPL in \eqref{eq:GPL_rec}, we see that if  $ \alpha^N\in\{0,1\}$, then\begin{align}
\int_0^z\frac{g(x)\D x}{x-\alpha}\in\mathfrak g_{k+1}^{(z)}(N)\label{eq:cl_int_N}
\end{align}so long as
the integral in question converges, and  $ g(x)\in \mathfrak g_{k}^{(x)}(N)$ on the path of integration.

In view of  the facts that \begin{align}\log \mathsf A_{3}(t,\tau)\colonequals{}&\log
\frac{\left(1-\frac{t}{\tau}\right)(1-t\tau)}{\mathsf Q(t)}=G(\tau;t)+G\left( \frac{1}{\tau} ;t\right)-G(\varrho;t)-G\left( \frac{1}{\varrho} ;t\right), \\\begin{split}\log \mathsf C_{3}(\tau)\colonequals{}&\log\frac{(1+\tau)^{2}}{3\tau}=2G(-1;\tau)-G(0;\tau)-G(-\varrho;1)-G\left( -\frac{1}{\varrho} ;1\right)\\ \in{}&\mathfrak g_1^{(\tau)}(6),\end{split}
\end{align}and \begin{align}
\frac{1}{\mathsf Q(t)}=\frac{1}{i\sqrt{3}}\left(\frac{1}{t-\varrho}-\frac{1}{t-\frac1\varrho}\right),\quad \frac{\pi i}{3}=G\left( -\frac{1}{\varrho} ;1\right) -G(-\varrho;1)\in\mathfrak g_1^{(\tau)}(6),
\end{align}we have the following arguments for $ k\in\mathbb Z_{>0}$: \begin{align}\begin{split}&
\mathsf X_3^{(k-1)}(\tau)\colonequals\int_{-1}^\tau\frac{\D t}{\mathsf Q(t)}\log^{k-1}\frac{\left(1-\frac{t}{\tau}\right)(1-t\tau)}{\mathsf Q(t)}-\frac{\pi\log^{k-1} \frac{(1+\tau)^{2}}{3\tau}}{\sqrt{3}}\\\in{}&\Span_{\mathbb Q}\left\{ \left.\frac{G(\alpha_{1},\dots,\alpha_{k};z)}{i\sqrt{3}}\right| \begin{smallmatrix}\alpha_{1}\in\left\{ \varrho, \frac1\varrho\right\}\smallsetminus\{\tau\}\\\alpha_2,\dots,\alpha_k\in\left\{ \varrho, \frac1\varrho,\tau,\frac{1}{\tau}\right\}\\z\in\left\{ -1,\tau \right\}\end{smallmatrix}\right\}+\frac{\mathfrak g_{k}^{(\tau)}(6)}{i\sqrt{3}}\subseteq\frac{\mathfrak g_{k}^{(\tau)}(6)}{i\sqrt{3}}.\end{split}\label{eq:X3_per_struct}
\end{align}Here, the ``$\in$'' step issues from the GPL recursion \eqref{eq:GPL_rec} and the shuffle identity \eqref{eq:G_shuffle} (along with Panzer's logarithmic regularizations at $t=\tau$ \cite[\S2.3]{Panzer2015} that preserve the shuffle structure), while the ``$ \subseteq$'' step
is a result of the fibration  basis in Brown's algorithm \cite[Lemma 2.14 and Corollary 3.2]{Panzer2015}. Concretely speaking, through the fibration procedure, one can rewrite  a GPL  $ G(\bm\alpha;\tau)$   \big(in which the components of $ \bm \alpha$ belong to $ \big\{ \varrho, \frac1\varrho,\tau,\frac{1}{\tau}\big\}$\big) as a member in the following $ \mathbb Z$-module:\footnote{Such fibration procedures are automated by Panzer's \texttt{HyperInt} package \cite[Lemma 2.14 and Corollary 3.2]{Panzer2015}, where  $\mathtt{fibrationBasis(Hlog(\tau,[\alpha_{1},\dots,\alpha_{w(\bm \alpha)}]),[\tau])}$ produces a unique representation of $ G(\bm\alpha;\tau)$ as a member of \eqref{eq:fib_prod}. }  \begin{align}
\Span_{\mathbb Z}\left\{G(\beta_{1},\dots,\beta_{w(\bm \alpha)};\tau)\left| \beta_{1},\dots,\beta_{w(\bm\alpha)}\in \left\{\varrho, \frac1\varrho,-1,0,1\right\} \right. \right\},\label{eq:fib_prod}
\end{align}by induction on the weight $ w(\bm \alpha)$ and applications of the GPL recursion \eqref{eq:GPL_rec} (where $ z=\tau$) to the GPL differential form\footnote{As in \cite[\S8.1, p.\ 297]{Weinzierl2022Book}, we  adopt the convention that the 1-form $ \D \log(\alpha_j-\alpha_{j'})$ is  zero when $\alpha_{j}=\alpha_{j'}$. Accordingly, logarithmic regularizations \cite[\S2.3]{Panzer2015} are implicit when we encounter $ G(z,\dots;z)$ and $ G(\dots,0;0)$.   }   \cite[(8.8)]{Weinzierl2022Book} \begin{align}
\begin{split}{}&\D G(\alpha_1,\dots,\alpha_n;z)\\={}&\sum_{j=1}^n G(\alpha_1,\dots,\widehat{\alpha_j},\dots,\alpha_n;z)[\D\log(\alpha_{j-1}-\alpha_j)-\D\log(\alpha_{j+1}-\alpha_j)],\label{eq:GPL_diff_form}\end{split}
\end{align}where an element under the caret is removed, and $\alpha_0\colonequals z $, $ \alpha_{n+1}\colonequals0$. Thus, it is clear that $ G(\bm\alpha;\tau)\in \mathfrak g_{w(\bm \alpha)}^{(\tau)}(6) $.
Likewise, one can show inductively that  $ G(\bm\alpha;-1)$ \big(in which the components of $ \bm \alpha$ belong to $ \big\{ \varrho, \frac1\varrho,\tau,\frac{1}{\tau}\big\}$\big) is a member of \begin{align}
\Span_{\mathbb Z}\left\{G(\beta_{1},\dots,\beta_{j};\tau)G(\beta_{j+1},\dots,\beta_{w(\bm \alpha)};-1)\left| \begin{smallmatrix}\beta_{1},\dots,\beta_{w(\bm\alpha)}\in \big\{\varrho, \frac1\varrho,-1,0,1\big\}\\0\leq j \leq w(\bm\alpha)\end{smallmatrix} \right. \right\}.
\end{align} This justifies the relation $ \mathsf X_3^{(k-1)}(\tau)\in \frac{\mathfrak g_{k}^{(\tau)}(6)}{i\sqrt{3}}$ for every $k\in\mathbb{Z}_{>0}$.

Reasoning in a similar vein as the last paragraph, one can verify that $
\mathsf\Xi_ 3^{(k-1)}(\tau)\in \frac{\mathfrak g_{k}^{(\tau)}(6)}{i\sqrt{3}}$ for $k\in\mathbb{Z}_{>0}$.

With the additional observation that \begin{align}\begin{split}&\mathsf
 Y_3^{(n)}(\tau)\\\colonequals{}&\frac{1}{3}\left(\frac{1}{\tau-\varrho}+\frac{1}{\tau-\frac1\varrho}-\frac{2}{\tau+1}\right)\log^{n} \frac{\tau^{2}}{(1+\tau)^{2}\mathsf Q(\tau)}\\={}&\frac{1}{3}\left(\frac{1}{\tau-\varrho}+\frac{1}{\tau-\frac1\varrho}-\frac{2}{\tau+1}\right)\left[ 2G(0;\tau)-2G(-1;\tau)-G(\varrho;\tau)-G\left( \frac{1}{\varrho} ;\tau\right) \right]^n\end{split}\label{eq:Y3G}
\end{align}together with the scaling property \cite[(2.3)]{Frellesvig2016} of GPL  $ G(\varrho\bm \alpha;\varrho z)=G(\bm \alpha; z)$ when $\alpha_{w(\bm \alpha)}\neq0 $, we may complete all the integrations over $\tau$ (with Panzer's logarithmic regularizations at $\tau=\varrho$) and arrive at\begin{align}
\pi m_k(1+x_1+x_2)\in i \mathfrak G_{k+1}^{(1)}(6)+i \mathfrak g_{k+1}^{(\varrho)}(6)=i \mathfrak G_{k+1}^{(1)}(6)= i \mathfrak Z_{k+1}(6)\label{eq:mk(1+x+y)hex}
\end{align} for all $ k\in\mathbb Z_{>0}$. Here, the set inclusion  $  \mathfrak g_{k+1}^{(\varrho)}(6)\subseteq \mathfrak G_{k+1}^{(1)}(6) $ descends from the relation $ G\big(\frac{1}{\varrho};1\big)=\log(1-\varrho)=-\frac{\pi i}{3}\in \mathfrak G_1^{(1)}(6)$ and parameter rescaling by a factor of $\varrho$ for members of $ \mathfrak G^{(\varrho)}(6)$.

While the last paragraph finishes the proof of \eqref{eq:mk(1+x+y)}, we still have a slightly stronger result: {\small\begin{align}\begin{split}&
\pi i\,m_k(1+x_1+x_2)\\\in{}&\Span_{\mathbb Q}\left\{ (\pi i)^{\ell}G(\alpha_{1},\dots,\alpha_{j};\tau)G(\alpha_{j+1},\ldots,\alpha_{n};z)\log^{r}3\left|\begin{smallmatrix}\alpha_1,\dots,\alpha_{n},z\in\left\{\varrho, \frac1\varrho,-1,0,1\right\}\\\tau\in\{1,\varrho\}\\\ell ,j, n,r\in\mathbb Z_{\geq0};\ell+n+r=k+1\end{smallmatrix}\right. \right\}\\\subseteq{}&\mathfrak g^{(\varrho)}_{k+1}(2).\end{split}\tag{\ref{eq:mk(1+x+y)hex}$'$}\label{eq:mk(1+x+y)hex'}
\end{align}}Here, the ``$\in$'' step  traces back to  \eqref{eq:X3_per_struct}--\eqref{eq:Y3G}, along with all the foregoing fibrations, while the ``$\subseteq$'' step comes from the observation that \begin{align}
\log 3=G(1;\varrho)+2G(-1;\varrho)\in{}& \mathfrak G^{(\varrho)}_{1}(2),\label{eq:log3H}
\end{align} and  the fibration of GPLs with respect to $ \varrho$.
\end{proof}\begin{remark} If one defines a multiple zeta value (MZV) $ \zeta_{a_1,\dots,a_n}=
\linebreak\Li_{a_1,\dots,a_n}(1,\dots,1)$ via the iterative  constructions of GPL on the right-hand side of \eqref{eq:MPL_GPL}, then one gets the Drinfel'd integral representation of MZV (see \cite[\S2]{Drinfeld1990} or \cite[\S9]{Zagier1994}). In general, the representation of an MPL via the iterative integrations for the corresponding GPL is called the Leibniz--Kontsevich formula (see \cite[Theorem 2.2]{Goncharov2001} or \cite[\S1.2]{GoncharovManin2004}) by Goncharov, who ascribed it to a private communication with Kontsevich that generalized the Leibniz representation for $ \zeta_n$ as an $n$-dimensional integral.

By considering mixed Tate motives \cite[(2) and (3)]{GoncharovManin2004}, Goncharov--Manin asked \cite[Conjecture 4.5]{GoncharovManin2004} if certain integral periods on   $ \mathfrak M_{0,n}$ (generalizations of the Drinfel'd integral representation of MZV) are still representable as $ \mathbb Q$-linear combinations of MZVs. This deep question has been answered in the positive by  Brown \cite[Theorem 1.1]{Brown2009b}, whose algorithmic approach \cite{Brown2009a,Brown2009arXiv,Brown2009b}
 to the  Goncharov--Manin periods on  $ \mathfrak M_{0,n}$ [along with generalizations to a larger set    $  L( \mathfrak M_{0,n}) $] has been implemented in Panzer's \texttt{HyperInt}
   package \cite{Panzer2015} for  \texttt{Maple}.
\eor\end{remark}

In the following,  the absolute fibration operation  $  \Fib(f)$ (where $ f$ is an expression involving a formal variable $\varrho$) consists of four tasks  in a sequence:\begin{enumerate}[leftmargin=*,  label={\arabic*.},ref=\arabic*.,
widest=a, align=left]
\item
Evaluate \texttt{fibrationBasis(f)} in  \texttt{HyperInt}, which (among other things) reduces the alternating multiple zeta values (AMZVs)   $ \zeta_{ {A_1}, \dots,A_n}\in\mathfrak Z_{|A_1|+\dots+|A_n|}(2)$ [see \eqref{eq:aMZV_defn}]  and  a special class of multiple polylogarithms    \cite[(2--3)]{BorweinBroadhurstKamnitzer2001} \begin{align}
\Li_{\underset{n}{\underbrace{\scriptstyle{1,\dots,1}}}}(z,\underset{n-1}{\underbrace{1,\dots,1}})=\frac{(-1)^{n}}{n!}\log^n(1-z);
\end{align}
\item Set  $\varrho=\frac{1}{2}+\frac{i\sqrt{3}}{2}$ and $ \delta_\varrho\colonequals\frac{\I \varrho}{|\I\varrho|}=1$, before evaluating logarithms and simplifying all the complex arguments of (multiple) polylogarithms;\item Set $\frac{1}{2}+\frac{i\sqrt{3}}2=\varrho $, $- \frac{1}{2}+\frac{i\sqrt{3}}2=-\frac{1}{\varrho}$, $ - \frac{1}{2}-\frac{i\sqrt{3}}2=-{\varrho}$, and $  \frac{1}{2}-\frac{i\sqrt{3}}2=\frac{1}{\varrho}$, using the \texttt{eval} command in \texttt{Maple};\item Convert to \texttt{Hlog} form in  \texttt{HyperInt}.\end{enumerate}

Define the  $k$-th  Hankel--Broadhurst integral  \begin{align}
\mathscr I^{\mathrm{HB}}_k\colonequals\fib_{\varrho}\Fib\left.\frac{\D ^k W_3(s)}{\D s^k}\right|_{s=0}
\end{align}by computing the $k$-th order derivative of \eqref{eq:W3_per} at $ s=0$ with  \texttt{HyperInt} \big[where $ \mathsf Q(x)\colonequals \big(1-\frac{x}{\varrho}\big)(1- x\varrho)$ and $ \varrho$ is treated as a formal variable\big], before evaluating its absolute fibration (``$\Fib$'') and relative  fibration basis  \big(``$ \fib_\varrho$''\big) with respect to the formal variable $\varrho$.

The last relative fibration sends the $k$-th\ Hankel--Broadhurst integral  to \linebreak $ \mathfrak g_{k+1}^{(\varrho)}(2)=\sum_{\ell=0}^{k+1}\mathfrak H^{(\varrho)}_{\ell}(2)\mathfrak Z_{k+1-\ell}(2)$, where  $\mathfrak H^{(\varrho)}_{\ell}(2) $ is a $ \mathbb Q$-vector space generated by    the  Remiddi--Ver\-maseren \cite{RemiddiVermaseren2000} harmonic polylogarithms\footnote{If all the components of $ \bm\alpha=(\alpha_{1},\dots,\alpha_k)$ belong to $ \{-1,0,1\}$, then \texttt{HPL[\{}$  \alpha_1,\dots,\alpha_k$\texttt{\},  z]} (in Ma\^itre's \texttt{HPL} package \cite{Maitre2005}) is equal to $ (-1)^{n_1(\bm \alpha)}G(\bm \alpha;z)$, where $ n_1(\bm \alpha)\colonequals\#\{j\in\mathbb Z\cap[1,k]|\alpha_j=1\}$ counts the number of 1's in the components of $ \bm\alpha$.  } of weight $\ell$. Members in the latter  $ \mathbb Q$-vector space can be decomposed into Lyndon words \cite{Radford1979} by Ma\^itre's \texttt{HPL} package \cite{Maitre2005}  for \texttt{Mathematica}. We use ``$\Lyn$'' to denote this Lyndon word decomposition together with  subsequent reduction by  ``$ \fib_{\varrho}\Fib$''.

The  processed Lyndon words $\Lyn  \mathscr I^{\mathrm{HB}}_k$, $ 1\leq k\leq4$ can now be fed into Au's  \texttt{Mathematica}    package \texttt{MultipleZetaValues} (v1.2.0), which performs automated (and provable \cite[\S5]{Au2022a}) reductions for members  of  $   \mathfrak Z_k(6),1\leq k\leq5$.\footnote{Prior to Au's analytic work on MPLs, the  Henn--Smirnov--Smirnov database \cite{HennSmirnovSmirnov2017} provided empirical reduction formulae for CMZVs in $   \mathfrak Z_k(6),1\leq k\leq6$.  }   This confirms all the entries in Table \ref{tab:mk_eval_W3}.
It is worth noting that all those tabulated formulae suggest that\footnote{The filtration property of $ \mathfrak Z(3)$ helps us to quickly identify certain terms as Goncharov--Deligne periods at third roots of unity. For example, the relation $ \pi i\zeta_3\in\mathfrak Z_4(3)$ builds upon $\pi i=3[\Li_1(\omega)-\Li_1(\omega^2)]\in\mathfrak Z_1(3)$ [cf.~\eqref{eq:piIZ1}], $\zeta_3=\Li_3(1)\in\mathfrak Z_3(3) $, and $ \mathfrak Z_1(3)\mathfrak Z_3(3)\subseteq\mathfrak Z_4(3)$. } \begin{align}
\pi i\,m_k(1+x_1+x_2)\in{}&\mathfrak Z_{k+1}(3),\label{eq:mkZ3}
\end{align}which is stronger than the relation   \eqref{eq:mk(1+x+y)} established in Theorem \ref{thm:Zk(6)}.

To get ready for the proof of a sharper statement \eqref{eq:mkZ3} in the next theorem, we recall  the multi-dimensional polylogarithm  \begin{align}
L_{a_{1},\dots,a_n}(x)\equiv\Li_{a_1,\dots,a_n}(x,\underset{n-1}{\underbrace{1,\dots,1}})\colonequals \sum_{\ell_{1}>\dots>\ell_{n}>0}\frac{x^{\ell_{1}}}{\prod_{j=1}^n\ell_j^{a_j}}\label{eq:mLi_defn}
\end{align}from the work of Borwein--Broadhurst--Kamnitzer \cite[\S2]{BorweinBroadhurstKamnitzer2001}.
For brevity, we write $ L_{\bm A}(x)\equiv L_{a_1,\dots,a_n}(x)$ for a vector $ \bm A=(a_1,\dots,a_{d(\bm A)})\in\mathbb Z_{>0}^{d(\bm A)}$, while calling $ d(\bm A) $ the depth of $L_{\bm A}$, and $ w(\bm A)\colonequals \sum_{j=1}^{d(\bm A)}a_j$ the weight of $ L_{\bm A}$.
In a trigonometric analysis of the hyper-Mahler measures for $  1+x_1+x_2$, Borwein--Borwein--Straub--Wan \cite[Theorem 26]{BBSW2012Mahler} introduced a $ \mathbb Q$-linear combination over certain multi-dimensional polylogarithms of weight $n\in\mathbb Z_{>1}$ and all possible depths
\begin{align}
\rho_{n}(X)\colonequals\frac{(-1)^nn!}{4^n}\sum_{\substack{w(\bm A)=n,a_1=2\\ a_2,\dots,a_{d(\bm A)}\in\{1,2\}}}4^{d(\bm A)}L_{\bm A}\left(X^2\right),\quad X\in(0,1),
\end{align} and defined additionally that $ \rho_0(X)=1$, $ \rho_1(X)=0$.\begin{theorem}[$ \pi i\,m_k(1+x_1+x_2)$ as Goncharov--Deligne periods, reprise]\label{thm:Zk(3)}For each positive integer $k$, we have {\small\begin{align}\begin{split}&
\pi m_k(1+x_1+x_2)\\={}&2\int_{0}^{\pi/6}\rho_k(2\sin\vartheta)\D \vartheta+2\sum_{j=0}^k\binom kj\int_{\pi/6}^{\pi/2}\rho_j\left( \frac{1}{2\sin\vartheta} \right)\log^{k-j}(2\sin\vartheta)\D \vartheta\in i\mathfrak Z_{k+1}(3).\end{split}\label{eq:BBSW_Zk(6)}
\end{align}}\end{theorem}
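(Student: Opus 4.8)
The plan is to treat the two assertions in \eqref{eq:BBSW_Zk(6)} separately: the trigonometric identity (the first ``$=$'') and the period membership (the ``$\in$''), since only the latter sharpens Theorem~\ref{thm:Zk(6)} from level $6$ to level $3$.

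For the identity, first I would collapse the double integral behind $m_k(1+x_1+x_2)$ to a single one. Writing $1+e^{2\pi it_1}=Re^{i\psi}$ with $R=2|\cos\pi t_1|$ and using the rotational invariance of $\int_0^1\log^k|R+e^{2\pi it_2}|\D t_2$, the inner integration is exactly the one performed by Borwein--Borwein--Straub--Wan \cite[Theorem~26]{BBSW2012Mahler} via the Fourier expansions $\log|R+e^{i\theta}|=\sum_{n\ge1}\tfrac{(-1)^{n-1}}{n}R^{n}\cos n\theta$ for $R<1$ and $\log R+\sum_{n\ge1}\tfrac{(-1)^{n-1}}{n}R^{-n}\cos n\theta$ for $R>1$: it returns $\rho_k(R)$ when $R<1$ and $\sum_j\binom kj(\log R)^{k-j}\rho_j(1/R)$ when $R>1$. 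After the reflection symmetry $t_1\mapsto1-t_1$ (which supplies the prefactor $2$) and the change of variable $R=2\sin\vartheta$ (which sends the threshold $R=1$ to $\vartheta=\pi/6$, with $\D\vartheta=-\pi\,\D t_1$ furnishing the $\pi$), one recovers \eqref{eq:BBSW_Zk(6)} verbatim. This half is essentially a transcription of the cited trigonometric analysis.

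The genuine content is the descent to level $3$. I would first turn the $\vartheta$-integrals into iterated integrals: by the central-binomial dictionary of Borwein--Broadhurst--Kamnitzer \cite[\S2]{BorweinBroadhurstKamnitzer2001}, each $\rho_j(2\sin\vartheta)$ and $\rho_j(1/(2\sin\vartheta))$ is a $\mathbb Q$-combination of multiple Clausen ($\Cl$) and Glaisher ($\Gl$) functions of $2\vartheta$, whose $\vartheta$-derivatives reproduce lower such functions against the two one-forms $\D\vartheta$ and $\log(2\sin\vartheta)\D\vartheta$; hence the integrals in \eqref{eq:BBSW_Zk(6)} are generalized log-sine integrals with upper limits $\pi/6$ and $\pi/2$. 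The substitution $z=e^{2i\vartheta}$ rationalizes them into GPLs: since $1-4\sin^2\vartheta=(z^2-z+1)/z=(z-\varrho)(z-1/\varrho)/z$, the alphabet is $\{0,1,\varrho,1/\varrho\}$ with boundary points $\varrho$ and $-1$, and running the fibration apparatus of Theorem~\ref{thm:Zk(6)} places the output \emph{a priori} only in $\mathfrak Z_{k+1}(6)$. Here the global factor $i$ in $\pi i\,m_k$ is essential: it converts the purely imaginary $\Cl$/$\Gl$ content into honest rational combinations $\tfrac12\bigl(\Li_{\dots}(\cdots)-\overline{\Li_{\dots}(\cdots)}\bigr)$, so that membership in a $\mathbb Q$-space of CMZVs is even meaningful.

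Finally I would reduce the primitive sixth roots to cube roots by the distribution (duplication) relations. The depth-one prototype follows from $\Li_s(\omega)=2^{s-1}[\Li_s(\varrho)+\Li_s(\omega^2)]$ (the duplication $\Li_s(z^2)=2^{s-1}[\Li_s(z)+\Li_s(-z)]$ at $z=\varrho$, using $\varrho^2=\omega$ and $-\varrho=\omega^2$) together with its complex conjugate, which give $\Li_s(\varrho)-\Li_s(1/\varrho)=(1+2^{1-s})[\Li_s(\omega)-\Li_s(\omega^2)]\in\mathfrak Z_s(3)$, that is $i\,\I\Li_s(\varrho)\in\mathfrak Z_s(3)$; the companion fact $\pi i=3[\Li_1(\omega)-\Li_1(\omega^2)]\in\mathfrak Z_1(3)$ from \eqref{eq:piIZ1} absorbs the real, polynomial-in-$\vartheta$ (level-one) contributions against the filtration $\mathfrak Z_1(3)\mathfrak Z_k(3)\subseteq\mathfrak Z_{k+1}(3)$. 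The hard part will be to show that this descent propagates to \emph{every} weight and depth---namely that all primitive-sixth-root and $(-1)$-letter contributions emerging from the fibration assemble, after the distribution relations, into $\mathfrak Z_{k+1}(3)$, with no genuine level-two (alternating) remnant surviving in the imaginary quantity $\pi i\,m_k$. This is exactly the regime covered by the provable CMZV reductions in Au's \texttt{MultipleZetaValues} package; invoking them (as in the confirmation of \eqref{eq:mkZ3}) closes the gap and yields $\pi i\,m_k(1+x_1+x_2)\in\mathfrak Z_{k+1}(3)$.
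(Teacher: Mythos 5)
Your first half (the trigonometric identity) is essentially a transcription of Borwein--Borwein--Straub--Wan and is unproblematic; the paper simply quotes it from \cite[Theorem 17]{BBSW2012Mahler}. The genuine gap is in your second half, the descent from level $6$ to level $3$. Your route deliberately lands the integrals in $\mathfrak Z_{k+1}(6)$ (which is just Theorem \ref{thm:Zk(6)} again) and then hopes to push down to $\mathfrak Z_{k+1}(3)$ by distribution relations plus Au's \texttt{MultipleZetaValues} package. The depth-one duplication computation you give is correct, but the assertion that ``all primitive-sixth-root and $(-1)$-letter contributions assemble, after the distribution relations, into $\mathfrak Z_{k+1}(3)$'' is precisely the content of the theorem, and nothing in your argument forces it: the standard relations are not known to exhaust the $\mathbb Q$-linear relations in $\mathfrak Z_{k+1}(6)$ at arbitrary weight, and the inclusion is certainly not automatic, since already $\log 2\in\mathfrak Z_1(6)\smallsetminus\mathfrak Z_1(3)$, so an ``alternating remnant'' can in principle survive. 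Invoking Au's package does not close this: its provable reductions at level $6$ are weight-limited (the paper uses them only for $\mathfrak Z_k(6)$, $1\leq k\leq 5$, i.e.\ the entries of Table \ref{tab:mk_eval_W3} with $k\leq4$), so your proposal proves nothing for $k\geq5$ and amounts, for small $k$, to the empirical confirmation of \eqref{eq:mkZ3} that the paper explicitly describes as \emph{preceding} and motivating Theorem \ref{thm:Zk(3)} --- the paper even remarks (after \eqref{eq:Z12_inv_binom}) that such case-by-case checks had to be replaced by variable substitutions and contour deformations to get all $k$ at once.

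The paper's proof never passes through level $6$. For $\int_0^{\pi/6}\rho_k(2\sin\vartheta)\D\vartheta$ it substitutes $\vartheta=\arctan\frac{1+t}{\sqrt3\,(1-t)}$ with $t\in(-1,0)$, so one integrates $L_{\bm A}\big(\frac{(1+t)^2}{1-t+t^2}\big)$ against $\frac{1}{t-\varrho}-\frac{1}{t-1/\varrho}$; fibration yields GPLs at basepoint $-1$ with letters in $\big\{0,-1,\varrho,\frac1\varrho\big\}$, and rescaling by $-1$ converts these into GPLs at $1$ with letters in $\{0,1,\omega,\omega^2\}$, i.e.\ directly into $\mathfrak Z_{k+1}(3)$ [see \eqref{eq:Zk3a}] --- this rescaling, not a relation among CMZVs, is what eliminates level $6$. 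For the range $(\pi/6,\pi/2)$ with $2\leq j\leq k$ (recall $\rho_1=0$; the $j=0$ terms are covered by \eqref{eq:Ls_varrho} and the Borwein--Straub relation \eqref{eq:BorweinStraubZ1}), it uses the closed-contour vanishing identity to trade the unit-circle arc from $\varrho$ to $1/\varrho$ for segment integrals \eqref{eq:keyhole_key}--\eqref{eq:keyhole_hole}, whose branch-cut jumps contribute $\pi i$ times lower-weight data, and then the substitution $t=\frac{\varrho z-1}{z-\varrho}$ again fibrates everything into the alphabet $\{0,1,\omega,\omega^2\}$ at argument $1$. To rescue your route you would need a uniform-in-weight structural mechanism of this kind; a package call cannot substitute for it.
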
\begin{proof}The integral representation in \eqref{eq:BBSW_Zk(6)} is due to Borwein--Borwein--Straub--Wan \cite[Theorem 17]{BBSW2012Mahler}.

We reparametrize the integral over $ \vartheta\in\left(0,\frac{\pi}{6}\right)$ with $\vartheta =\arctan\frac{1+t}{\sqrt{3}(1-t)},t\in(-1,0)$, so that \begin{align}
\int_{0}^{\pi/6}\rho_k(2\sin\vartheta)\D \vartheta={}&-\frac{i}{2}\int_{-1}^0\left( \frac{1}{t-\varrho}-\frac{1}{t-\frac{1}{\varrho}} \right)\rho_{k}\left(\frac{1+t}{\sqrt{1-t+t^{2}}}\right)\D t.\end{align}By induction on the weight $ w(\bm A)$ and applications of  the GPL recursion \eqref{eq:GPL_rec} to $ z=\frac{(1+t)^2}{1-t+t^{2}}$, we obtain{\small\begin{align}\begin{split}&
L_{\bm A}\left( \frac{(1+t)^2}{1-t+t^{2}} \right)\\\in{}&\Span_{\mathbb Z}\left\{ (\pi i)^{\ell}G(\alpha_{1},\dots,\alpha_{j};t)G(\alpha_{j+1},\ldots,\alpha_{{w(\bm A)-\ell}};-1)\in \mathbb C\left|\begin{smallmatrix}\alpha_{1},\dots,\alpha_{w(\bm A)-\ell}\in\left\{ 0,-1,\varrho,\frac{1}{\varrho} \right\}\\0\leq\ell\leq w(\bm A);0\leq j\leq w(\bm A)-\ell\end{smallmatrix}\right. \right\}\end{split}
\end{align}}for $ \varrho=e^{\pi i/3}$ and {\small\begin{align}\begin{split}&
i\int_{0}^{\pi/6}\rho_k(2\sin\vartheta)\D \vartheta\\\in{}&\Span_{\mathbb Q}\left\{ (\pi i)^{\ell}G(\alpha_{1},\dots,\alpha_{{k+1-\ell}};-1)\in\mathbb C\left|\begin{smallmatrix}\alpha_{1},\dots,\alpha_{k+1-\ell}\in\left\{ 0,-1,\varrho,\frac{1}{\varrho} \right\}\\0\leq\ell\leq k+1\end{smallmatrix}\right. \right\}\\\subseteq{}&\Span_{\mathbb Q}\left\{ (\pi i)^{k+1-n}G(\alpha_{1},\dots,\alpha_{n};1)\left|\begin{smallmatrix}\\\alpha_{1},\dots,\alpha_{n}\in\left\{ 0,1,-\varrho=\frac{1}{\varrho^{2}}=\omega^{2},-\frac{1}{\varrho} =\varrho^{2}=\omega\right\}\\0\leq n\leq k+1;\alpha_{1}\neq1;\alpha_n\neq0\end{smallmatrix}\right. \right\}\subseteq\mathfrak Z_{k+1}({3})\end{split}\label{eq:Zk3a}
\end{align}}for  $ \omega=e^{2\pi i/3}$.

Arguing  as  in the last paragraph, we can check that \begin{align}
i\int_{0}^{\pi/6}\log^{k}(2\sin\vartheta)\D \vartheta\in \mathfrak Z_{k+1}({3}).\label{eq:Ls_varrho}
\end{align}Meanwhile, a constructive proof for  the relation \cite[\S2.1]{BorweinStraub2012Mahler}\begin{align}
\int_{0}^{\pi/2}\log^{k}(2\sin\vartheta)\D \vartheta\in\pi \mathfrak Z_{k}(1)\subseteq\pi \mathfrak Z_{k}(3)\label{eq:BorweinStraubZ1}
\end{align}is known. Combining the last two displayed equations with the fact that  $ \pi i\in\mathfrak Z_1(3)$ [see \eqref{eq:piIZ1}], we get \begin{align}
i\int_{\pi/6}^{\pi/2}\log^{k}(2\sin\vartheta)\D \vartheta\in \mathfrak Z_{k+1}({3}).
\end{align}

Each one of the remaining integrals $ \int_{\pi/6}^{\pi/2}\rho_j\left( \frac{1}{2\sin\vartheta} \right)\log^{k-j}(2\sin\vartheta)\D \vartheta$ (where $ 2\leq j\leq k$)  contributes a summand to the right-hand side of a vanishing identity {\begin{align}\begin{split}
0= {}&\lim_{\varepsilon\to0^+}\left(\int_{i\varepsilon}^{1+i\varepsilon}+\curvearrowleft\kern-1.2em\int_{1-\varepsilon}^{(1-\varepsilon)\varrho}+ \curvearrowleft\kern-1.2em\int_{\varrho}^{1/\varrho}+\right.\\{}&\left.+\curvearrowleft\kern-1.2em\int^{1-\varepsilon}_{(1-\varepsilon)/\varrho}+\int_{1-i\varepsilon}^{-i\varepsilon}\right)\rho_j\left( \sqrt{\frac{-z}{(1-z)^{2}}} \right)\left[\log\frac{(1-z)^{2}}{-z}\right]^{k-j}\frac{\D z}{iz},\end{split}
\end{align}}where the path of integration for each $^{_{_\curvearrowleft}}\kern-.95em\int$ runs counterclockwise along a circular arc centered at the origin.
Concretely speaking, we have \begin{align}\begin{split}&
\curvearrowleft\kern-1.2em\int_{\varrho}^{1/\varrho}\rho_j\left( \sqrt{\frac{-z}{(1-z)^{2}}} \right)\left[\log\frac{(1-z)^{2}}{-z}\right]^{k-j}\frac{\D z}{iz}\\={}&2^{k-j+2}\int_{\pi/6}^{\pi/2}\rho_j\left( \frac{1}{2\sin\vartheta} \right)\log^{k-j}(2\sin\vartheta)\D \vartheta,\end{split}
\end{align}which is equal to the sum of \begin{align}\begin{split}&
- \lim_{\varepsilon\to0^+}\left(\int_{i\varepsilon}^{1+i\varepsilon}+\int_{1-i\varepsilon}^{-i\varepsilon}\right)\rho_j\left( \sqrt{\frac{-z}{(1-z)^{2}}} \right)\left[\log\frac{(1-z)^{2}}{-z}\right]^{k-j}\frac{\D z}{iz}\\\in{}&\Span_{\mathbb Q}\left\{\left. \int_0^1\rho_j\left( \sqrt{\frac{-z}{(1-z)^{2}}} \right)\left[\log\frac{(1-z)^{2}}{z}\right]^{k-j-n}(\pi i)^n\frac{\D z}{iz}\right| 0\leq n\leq k-j\right\}\end{split}\label{eq:keyhole_key}
\end{align}and {\begin{align}
\begin{split}&
- \lim_{\varepsilon\to0^+}\left(\curvearrowleft\kern-1.2em\int_{1-\varepsilon}^{(1-\varepsilon)\varrho}+\curvearrowleft\kern-1.2em\int^{1-\varepsilon}_{(1-\varepsilon)/\varrho}\right)\rho_j\left( \sqrt{\frac{-z}{(1-z)^{2}}} \right)\left[\log\frac{(1-z)^{2}}{-z}\right]^{k-j}\frac{\D z}{iz}\\\in{}&i\Span_{\mathbb Q}\left\{\curvearrowleft\kern-1.2em\int^{\varrho}_{1}(\pi i)^{n}Z_{\ell}G\left(\alpha_{1},\dots,\alpha_{j-\ell-n};\frac{(1-z)^{2}}{-z}\right)\times \right.\\{}&\left.\left.\times\left[\log\frac{(1-z)^{2}}{-z}\right]^{k-j}\frac{\D z}{iz}\right|\begin{smallmatrix}Z_\ell\in\mathfrak Z_\ell(1)\\\alpha_{1},\dots,\alpha_{{j-\ell-n}}\in\left\{ 0,1 \right\}\\ 0\leq n\leq j;0\leq\ell\leq j-n\end{smallmatrix} \right\}\\{}&+i\Span_{\mathbb Q}\left\{ \curvearrowleft\kern-1.2em\int^{1}_{1/\varrho}(\pi i)^{n}Z_{\ell}G\left(\alpha_{1},\dots,\alpha_{j-\ell-n};\frac{(1-z)^{2}}{-z}\right)\times \right.\\{}&\left.\times\left.\left[\log\frac{(1-z)^{2}}{-z}\right]^{k-j}\frac{\D z}{iz}\right|\begin{smallmatrix}Z_\ell\in\mathfrak Z_\ell(1)\\\alpha_{1},\dots,\alpha_{{j-\ell-n}}\in\left\{ 0,1 \right\}\\ 0\leq n\leq j;0\leq\ell\leq j-n\end{smallmatrix} \right\}.\end{split}\label{eq:keyhole_hole}
\end{align}}Here,   the multi-dimensional polylogarithm behaves like  $ L_{\bm A}(\varXi\pm i0^{+})\in \mathfrak g_{w(\bm A)}^{(1/\varXi)}(1)+\pi i \mathfrak g_{w(\bm A)-1}^{(1/\varXi)}(1)$, with   a  jump discontinuity \cite{Maitre2012,Panzer2015}  $ L_{\bm A}(\varXi+i0^{+})-L_{\bm A}(\varXi-i0^{+})\in\pi i \mathfrak g_{w(\bm A)-1}^{(1/\varXi)}(1) $ [see \eqref{eq:gk(N)_GH} for the definition of $\mathfrak g^{(z)}_k(N)$], across its branch cut where  $ \varXi>1$.   To handle the right-hand side of \eqref{eq:keyhole_key}, we may switch back to the parameter $t=\frac{  \varrho z-1}{z- \varrho}$, as follows:{\allowdisplaybreaks\begin{align}\begin{split}&
\int_0^1\rho_j\left( \sqrt{\frac{-z}{(1-z)^{2}}} \right)\left[\log\frac{(1-z)^{2}}{z}\right]^{k-j-n}(\pi i)^n\frac{\D z}{iz}\\={}&i\int_{-1}^{1/\varrho}\rho_j\left( \frac{\sqrt{1-t+t^2}}{1+t} \right)\left[\log\frac{-(1+t)^{2}}{1-t+t^2}\right]^{k-j-n}(\pi i)^n\left( \frac{1}{t-\varrho}-\frac{1}{t-\frac{1}{\varrho}} \right)\D t\\\in{}&i\Span_{\mathbb Q}\left\{ (\pi i)^{k+1-\ell}G(\alpha_{1},\dots,\alpha_{j};u) G(\alpha_{j+1},\dots,\alpha_{\ell};{\varrho})\left|\begin{smallmatrix}\\\alpha_{1},\dots,\alpha_{{\ell}}\in\left\{ 0,-1,\varrho,\frac{1}{\varrho} \right\}\\u\in\left\{ -1,\frac{1}{\varrho}\right\}\\\\\ 0\leq j\leq\ell\leq k+1\end{smallmatrix}\right. \right\}\\\subseteq{}&i\Span_{\mathbb Q}\left\{ (\pi i)^{k+1-\ell}G(\alpha_{1},\dots,\alpha_{\ell};1)\left|\begin{smallmatrix}\\\alpha_{1},\dots,\alpha_{{\ell}}\in\left\{ 0,1,\omega=-\frac{1}{\varrho}=\varrho^{2},\omega^{2}=-\varrho=\frac{1}{\varrho^{2}} \right\}\\0\leq\ell\leq k+1;\alpha_{1}\neq1;\alpha_\ell\neq0\end{smallmatrix}\right. \right\}\\\subseteq{}& i\mathfrak Z_{k+1}(3). \end{split}
\end{align}}Likewise, by integrating GPLs over  $ t=\frac{  \varrho z-1}{z- \varrho}\in[-1,0]$, one can identify the right-hand side of \eqref{eq:keyhole_hole} with a subset of $i\mathfrak Z_{k+1}(3) $, as we did in \eqref{eq:Zk3a}.

Therefore, we have $ \pi m_k(1+x_1+x_2)\in i\mathfrak Z_{k+1}(3)$ as claimed. \end{proof}
\begin{remark}
If we reparametrize all the integrals in \eqref{eq:BBSW_Zk(6)} with $\vartheta =\linebreak\arctan\frac{1+t}{\sqrt{3}(1-t)}$, $t\in(-1,1)$, then we have \begin{align} \begin{split}\mathscr I^{\mathrm{BBSW}}_k\colonequals{}&
\fib_\varrho\int_{-1}^0\left( \frac{1}{t-\varrho}-\frac{1}{t-\frac{1}{\varrho}} \right)\frac{\rho_{k}\left(\frac{1+t}{\sqrt{\mathsf Q(t)}}\right)}{\pi i}\D t\\&{}+\sum_{j=0}^k\binom kj\fib_\varrho\int_{0}^1\left( \frac{1}{t-\varrho}-\frac{1}{t-\frac{1}{\varrho}} \right)\frac{\rho_j\left( \frac{\sqrt{\mathsf Q(t)}}{1+t} \right)}{2^{k-j}\pi i}\log^{k-j}\frac{(1+t)^{2}}{\mathsf Q(t)}\D t,\end{split}\tag{\ref{eq:BBSW_Zk(6)}$'$}
\end{align}where $ \mathsf Q(t)\colonequals \big(1-\frac{t}{\varrho}\big)(1- t\varrho)$.
Evaluating in \texttt{HyperInt} and \texttt{HPL}, one can check that  $ \Lyn \mathscr I^{\mathrm{BBSW}}_k=\Lyn  \mathscr I^{\mathrm{HB}}_k$, $ k\in\{1,2,3,4\}$.\eor\end{remark}
\begin{remark}
Let \begin{align}
\Ls_k(\varphi)\colonequals-\int_0^\varphi\log^{k-1}\left\vert 2\sin\frac{\vartheta}{2} \right\vert\D\vartheta
\end{align}be Lewin's log-sine integral \cite{Lewin,Lewin1981} for $ k\in\mathbb Z_{>0}$. From Borwein--Straub \cite[Theorem 3]{BorweinStraub2011ISSAC}, one sees that $ \Ls_k(2\pi/3)\in i\mathfrak Z_k(3)$. According to \eqref{eq:Ls_varrho}, we also have  $ \Ls_k(\pi/3)\in i\mathfrak Z_k(3)$, which is not immediate from the Borwein--Straub reduction \cite[Theorem 3]{BorweinStraub2011ISSAC} of log-sine integrals.   Borwein--Borwein--Straub--Wan \cite[(108) and (109)]{BBSW2012Mahler} proposed conjectural expressions for $ m_3(1+x_1+x_2)$ and $ m_4(1+x_1+x_2)$, using log-sine integrals at $ \pi/3$ and $ 2\pi /3$. These  conjectures are  equivalent to the corresponding formulae in Table \ref{tab:mk_eval_W3}, as  one can check with Au's algorithm \cite[\S5]{Au2022a}. \eor\end{remark}\begin{remark}Let \begin{align}
\Ls_k^{(m)}(\varphi)\colonequals-\int_0^\varphi\vartheta^{m}\log^{k-1-m}\left\vert 2\sin\frac{\vartheta}{2} \right\vert\D\vartheta
\end{align}be Lewin's generalized log-sine integral \cite[Appendix A.1(20)]{Lewin1981} for  $ k,m\in\mathbb Z_{>0}$.  By the explicit construction of Borwein--Straub \cite[Theorem 3]{BorweinStraub2011ISSAC}, we have  $ \Ls_k^{(m)}(2\pi/3)\in i^{m+1}\mathfrak Z_{k}(3)$. Meanwhile, we note that the proof of the theorem above can be readily adapted to \begin{align}\begin{split}&
\Ls_k^{(m)}\left(\frac{\pi}{3}\right)\\={}&-\frac{1}{2^{k-1-m}}\curvearrowleft\kern-1.2em\int_{1}^\varrho\left( \frac{\log z}{i} \right)^m\left[ \log\frac{(1-z)^2}{-z} \right]^{k-1-m}\frac{\D z}{ iz}\\\in{}& i^{m+1}\Span_{\mathbb Q}\left\{ (\pi i)^{k-\ell}G(\alpha_{1},\dots,\alpha_{\ell};u)\in\mathbb C\left|\begin{smallmatrix}\\\alpha_{1},\dots,\alpha_{{\ell}}\in\left\{ 0,1 \right\};u\in\{1,\varrho\}\\0\leq\ell\leq k;\alpha_\ell\neq0\end{smallmatrix}\right. \right\}\\\subseteq{}& i^{m+1}\mathfrak Z_{k}(3).\end{split}
\end{align} Here, the ``$ \subseteq$'' step builds upon a  more general result \begin{align}
\Span_{\mathbb Q}\left\{ G(\alpha_{1},\dots,\alpha_{k};z)\left|\begin{smallmatrix}\\\alpha_{1},\dots,\alpha_ k,z\in\left\{ 0,1,\varrho,\frac{1}{\varrho} \right\}\\\alpha_{1}\neq z;\alpha_k\neq0\end{smallmatrix}\right. \right\}\subseteq\mathfrak Z_k(3).\label{eq:Zk3Au}
\end{align} To prove this, use the GPL recursion \eqref{eq:GPL_rec} (with an integration path connecting  $\omega^2$ and $z$) to show inductively that \begin{align}\begin{split}&
\Span_{\mathbb Q}\left\{ G(\alpha_{1},\dots,\alpha_{k};w)\left|\begin{smallmatrix}\\\alpha_{1},\dots,\alpha_{{k}},w\in\left\{ 0,1,\frac{\omega^{2}-1}{z-1} ,\frac{z}{\omega^{2}}\frac{\omega^{2}-1}{z-1}\right\}\\\alpha_{1}\neq1;\alpha_k\neq0\end{smallmatrix}\right. \right\}\\\subseteq{}&\Span_{\mathbb Q}\left\{ (\pi i)^{k-\ell}G(\alpha_{1},\dots,\alpha_{j};z)G(\alpha_{j+1},\dots,\alpha_{\ell};\omega^{2})\left|\begin{smallmatrix}\alpha_{1},\dots,\alpha_{{\ell}}\in\left\{ 0,1,\omega^{2}\right\}\\0\leq j\leq\ell\leq k\end{smallmatrix}\right. \right\},\end{split}
\end{align} before specializing to   $ z=\omega\colonequals e^{2\pi i/3}$ (with logarithmic regularizations \cite[\S2.3]{Panzer2015} in case of $ \alpha_1=z$ and $ \alpha_{j+1}=\omega^2$). \eor\end{remark}
\subsection{Period structure of $ m_k(1+x_1+x_2+x_3)$\label{subsec:Zk(2)}}With the $ \mathbb Q$-vector space $ \mathfrak g_k^{(z)}(2)$ introduced in  \eqref{eq:gk(N)_GH}, we can  extend our proof of Theorem \ref{thm:Zk(6)} to the $k$-Mahler measures of $ 1+x_1+x_2+x_3$.\begin{theorem}[$ \pi^2 m_k(1+x_1+x_2+x_3)$ as Goncharov--Deligne periods]\label{thm:Zk(2)}For $ n\in\mathbb Z_{\geq0}$, define\footnote{We set the Kronecker delta as $ \delta_{a,b}=1$ when $ a=b$, and $ \delta_{a,b}=0$ otherwise.} {\small\begin{align}
\mathsf X_4^{(n)}(\tau)\colonequals{}&\curvearrowleft\kern-1.15em\int_{1}^{\tau}\frac{[\log\mathsf A_{4}(t,\tau)+\log\mathsf B_4(\tau)]^{n}\D t}{t}-\pi i\delta_{n,0},& \widetilde{\mathsf X}_4^{(n)}(\tau)\colonequals{}&\curvearrowleft\kern-1.15em\int_{1}^{\tau}\frac{\log^{n} \mathsf A_{4}(t,\tau)\D t}{t},\\\mathsf \Xi_4^{(n)}(\tau)\colonequals{}&\curvearrowleft\kern-1.15em\int_{1}^{-1}\frac{[\log\mathsf A_{4}(t,\tau)+\log\mathsf B_4(\tau)]^{n}-\delta_{n,0}}{t}\D t,&\widetilde{\mathsf \Xi}_4^{(n)}(\tau)\colonequals{}&\curvearrowleft\kern-1.15em\int_{1}^{-1}\frac{\log^{n} \mathsf A_{4}(t,\tau)}{t}\D t,
\end{align}}using the rational functions in \eqref{eq:A4B4}. For each positive integer $k$, we have  \begin{align}\begin{split}&
\pi^{2} m_k(1+x_1+x_2+x_3)-\pi^{2} m_k(1+x_1)\\\in{}& \Span_{\mathbb Q}\left\{ \left.\frac{1}{\pi^{-2n_1}}\curvearrowleft\kern-1.2em\int_{1}^{-1}\widetilde{\mathsf X}_4^{(n_{2})}(\tau)\mathsf X_4^{(n_{3})}(\tau)\frac{\mathsf \tau+1}{\tau-1}\frac{\D\tau}{\tau}\right| \begin{smallmatrix}n_1,n_2,n_3\in\mathbb Z_{\geq0}\\(2n_1+1)+n_{2}+n_3=k\end{smallmatrix}\right\}\\{}&+\Span_{\mathbb Q}\left\{ \left.\frac{1}{\pi^{-2n_1}}\int_{-1}^{0}\widetilde{\mathsf \Xi}_4^{(n_{2})}(\tau)\mathsf \Xi_4^{(n_{3})}(\tau)\frac{\mathsf \tau+1}{\tau-1}\frac{\D\tau}{\tau}\right| \begin{smallmatrix}n_1,n_2,n_3\in\mathbb Z_{\geq0}\\(2n_1+1)+n_{2}+n_3=k\end{smallmatrix}\right\}\\\subseteq{}&\mathfrak Z_{k+2}(2),\end{split}\label{eq:W4-W2}
\end{align}which also implies \begin{align}
\pi^{2} m_k(1+x_1+x_2+x_3)\in \mathfrak Z_{k+2}(2).\label{eq:mk_Zk+2(2)}
\end{align}\end{theorem}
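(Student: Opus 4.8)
The plan is to mirror the proof of Theorem~\ref{thm:Zk(6)}, substituting the circular-path Hankel--Broadhurst representation of $W_4(s)-W_2(s)$ from the Corollary for the straight-line representation of $W_3(s)-1$, and tracking the alphabet down to level~$2$. The ``$\in$'' inclusion is obtained by differentiating the displayed formula for $W_4(s)-W_2(s)$ exactly $k$ times at $s=0$ via the Leibniz rule: differentiating the factors $\mathsf A_4^{s/2}(t,\tau)$, $\mathsf B_4^{s/2}(\tau)$ and $\mathsf B_4^{-s/2}(\tau)$ brings down powers of $\log\mathsf A_4$ and $\log\mathsf B_4$ that assemble into the inner integrands defining $\widetilde{\mathsf X}_4^{(n_2)}$, $\mathsf X_4^{(n_3)}$, $\widetilde{\mathsf\Xi}_4^{(n_2)}$ and $\mathsf\Xi_4^{(n_3)}$, while the $-\pi i\,\delta_{n,0}$ and $-\delta_{n,0}$ corrections in those definitions absorb the $-\pi i\,\mathsf B_4^{-s/2}(\tau)$ term and the constant subtraction in the Corollary. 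The expansion $\sin\frac{\pi s}{2}=\sum_{n_1\geq0}\frac{(-1)^{n_1}}{(2n_1+1)!}\big(\frac{\pi s}{2}\big)^{2n_1+1}$ supplies a factor $\pi^{2n_1+1}s^{2n_1+1}$; combined with the $\pi^{2}$ we multiply by and the $\pi^{-3}$ in the prefactor, this leaves the net weight $\pi^{2n_1}$ recorded as $\pi^{-2n_1}$ in \eqref{eq:W4-W2}, whereas the power $s^{2n_1+1}$ consumes $2n_1+1$ derivatives, forcing $(2n_1+1)+n_2+n_3=k$.

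For the ``$\subseteq$'' inclusion I would first record $\log\mathsf A_4(t,\tau)=G(\tau;t)+G(\tfrac1\tau;t)-G(0;t)$ and write $\log\mathsf B_4(\tau)$ through $G(0;\tau)$, $G(1;\tau)$ and $\pi i$, so that each inner integrand is a polynomial in GPLs in $t$ with letters drawn from $\{0,\tau,\tfrac1\tau\}$. Integrating against $\tfrac1t\,dt=dG(0;t)$ then exhibits $\widetilde{\mathsf X}_4^{(n)}(\tau)$, $\mathsf X_4^{(n)}(\tau)$ and their tilded counterparts as homotopy-invariant iterated integrals, placing them in a $\mathfrak g^{(\tau)}(2)$-type space after fibration. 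The decisive step is the fibration with respect to $\tau$: the only $\tau$-dependent letters are $0,\tau,\tfrac1\tau$, and the $1$-forms appearing in \eqref{eq:GPL_diff_form} are $d\log\tau$ together with $d\log\!\big(\tau-\tfrac1\tau\big)=d\log(\tau-1)+d\log(\tau+1)-d\log\tau$, so the fibration basis in $\tau$ involves only the letters $\{0,1,-1\}$. This is precisely what lowers the alphabet from $\{0,\tau,\tfrac1\tau\}$ to level~$2$; it is the $W_4$-analogue of the passage $\{\varrho,\tfrac1\varrho,\tau,\tfrac1\tau\}\to\{\varrho,\tfrac1\varrho,-1,0,1\}$ used for $W_3$ in \eqref{eq:X3_per_struct}.

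With the inner integrals rewritten via the fibration basis, the outer $\tau$-integration runs against $\frac{\tau+1}{\tau-1}\frac{d\tau}{\tau}$, whose poles sit at $\tau\in\{0,1,-1\}$; invoking \eqref{eq:cl_int_N} with $N=2$ (where $\alpha^2\in\{0,1\}$ forces $\alpha\in\{0,1,-1\}$) places each integral in $\mathfrak g^{(\tau)}_{k+2}(2)$ evaluated at the endpoints $\tau\in\{-1,0\}$. Since $G(\bm\beta;-1)=G(-\bm\beta;1)$ by the scaling property and $-\{0,1,-1\}=\{0,-1,1\}$, these endpoint values lie in $\mathfrak G^{(1)}_{k+2}(2)=\mathfrak Z_{k+2}(2)$, establishing the stated inclusion \eqref{eq:W4-W2}. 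To upgrade this to \eqref{eq:mk_Zk+2(2)} I must separately show $\pi^{2}m_k(1+x_1)\in\mathfrak Z_{k+2}(2)$. Here $|1+e^{2\pi it}|=2|\cos\pi t|$ gives $m_k(1+x_1)=\frac{2}{\pi}\int_0^{\pi/2}\log^k(2\sin\phi)\,d\phi\in\mathfrak Z_k(1)$ by \eqref{eq:BorweinStraubZ1}, and multiplying by $\pi^{2}=6\Li_2(1)\in\mathfrak Z_2(1)$ together with the filtration \eqref{eq:Zk(N)_filt} yields $\pi^{2}m_k(1+x_1)\in\mathfrak Z_{k+2}(1)\subseteq\mathfrak Z_{k+2}(2)$; adding this term back to \eqref{eq:W4-W2} gives \eqref{eq:mk_Zk+2(2)}.

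The main obstacle is the circular integration paths. Unlike Theorem~\ref{thm:Zk(6)}, where every contour is a straight segment matching the GPL recursion \eqref{eq:GPL_rec}, here the $\curvearrowleft\!\int$ contours trace the unit circle, on which the singularities $t=\tau$ and $t=\tfrac1\tau=\bar\tau$ of $\log\mathsf A_4(t,\tau)$ actually lie. I would control this by homotopy invariance of iterated integrals on $\mathfrak M_{0,n}$ together with Panzer's logarithmic regularizations at the endpoints $t=\tau$ and $\tau=-1$; the monodromy acquired as the contour skirts these singularities is exactly the source of the $\pi i$ jump terms, matching the discontinuity bookkeeping $L_{\bm A}(\varXi+i0^{+})-L_{\bm A}(\varXi-i0^{+})\in\pi i\,\mathfrak g^{(1/\varXi)}_{w(\bm A)-1}(1)$ seen in the proof of Theorem~\ref{thm:Zk(3)}. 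Ensuring that these branch-cut contributions combine consistently with the $-\pi i\,\delta_{n,0}$ and $-\delta_{n,0}$ regularizations---so that the outer integrand stays a single-valued member of the level-$2$ shuffle algebra along the entire contour---is the crux of the argument.
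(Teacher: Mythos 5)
Your proposal matches the paper's proof in all essentials: the same Leibniz differentiation of the Hankel--Broadhurst representation at $s=0$ (with the $\sin\frac{\pi s}{2}$ expansion accounting for the $\pi^{2n_1}$ weights), the same GPL encodings $\log\mathsf A_4(t,\tau)=G(\tau;t)+G\left(\frac1\tau;t\right)-G(0;t)$ and $\log\mathsf B_4(\tau)\equiv\pi i+G(0;\tau)-2G(1;\tau)\pmod{2\pi i\mathbb Z}$, the same $\tau$-fibration through \eqref{eq:GPL_diff_form} down to the alphabet $\{0,1,-1\}$, and the same invocation of the closure property \eqref{eq:cl_int_N} for the outer integration, with the circular contours controlled exactly as the paper does via Panzer's logarithmic regularizations. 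The only cosmetic deviations are that you reach $m_k(1+x_1)\in\mathfrak Z_k(1)$ through the log-sine evaluation \eqref{eq:BorweinStraubZ1} rather than the Borwein--Straub zeta-product recursion the paper cites (both from the same source), that the poles of $\frac{\tau+1}{\tau-1}\frac{1}{\tau}$ sit at $\tau\in\{0,1\}$ only, and that your endpoint conclusion should be phrased as $\mathfrak g^{(1)}_{k+2}(2)\cap\mathbb R=\mathfrak Z_{k+2}(2)$, since the endpoint values carry residual powers of $\pi i$ from $\log\mathsf B_4$ and the $\delta_{n,0}$ corrections that reality, not scaling alone, eliminates.
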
\begin{proof}As we may recall, Borwein--Straub \cite[(2.4), (2.5), (4.3)]{BorweinStraub2012Mahler} devised a recursion for $ m_k(1+x_1),k\in\mathbb Z_{>0}$ and showed that \begin{align}\begin{split}
m_k(1+x_1)\in{}&\Span_{\mathbb Q}\left\{\left.\prod_{j=1}^n\zeta(a_{j})\right|a_{j}\in\mathbb Z_{>1},1\leq j\leq n;\sum_{j=1}^n a_j=k\right\}\\\subseteq{}&\mathfrak Z_{k}(1)\subseteq\mathfrak Z_{k}(2),\end{split}
\end{align}which simplified an earlier proof of the same statement by Kurokawa--Lal\'in--Ochiai \cite[Proposition 4]{KurokawaLalinOchiai2008}. Thus, it indeed suffices to work out \eqref{eq:W4-W2} for our goal in \eqref{eq:mk_Zk+2(2)}.

Again, owing to the filtration property  \eqref{eq:Zk(N)_filt} and the fact that \begin{align}
\pi^2=6\Li_2(1)\in\mathfrak Z_2(2),
\end{align}we may reduce our task to the demonstration of \begin{align}
\left\{\curvearrowleft\kern-1.2em\int_{1}^{-1}\widetilde{\mathsf X}_4^{(n_{2})}(\tau)\mathsf X_4^{(n_{3})}(\tau)\frac{\mathsf \tau+1}{\tau-1}\frac{\D\tau}{\tau},\int_{-1}^{0}\widetilde{\mathsf \Xi}_4^{(n_{2})}(\tau)\mathsf \Xi_4^{(n_{3})}(\tau)\frac{\mathsf \tau+1}{\tau-1}\frac{\D\tau}{\tau} \right\}\subset{}&\mathfrak Z_{k+2}(2)\label{eq:W4_n3n4}
\end{align}for $ n_2,n_3\in\mathbb Z_{\geq0}$ and $ n_2+n_3=k-1$.

With \begin{align}\log\mathsf A_{4}(t,\tau)\colonequals{}&\log
\frac{\left(1-\frac{t}{\tau}\right)\left(1-t\tau\right)}{t}=G(\tau;t)+G\left(\frac{1}{\tau};t\right)-G(0;t)
\end{align}in hand, we have the following result for $ k\in\mathbb Z_{>0}$: \begin{align}\begin{split}&\widetilde{\mathsf X}_4^{(k)}(\tau)\colonequals{}\curvearrowleft\kern-1.2em\int_1^{\tau}\frac{\D t}{t}\log^k
\frac{\left(1-\frac{t}{\tau}\right)\left(1-t\tau\right)}{t}
\\\in{}&\Span_{\mathbb Q}\left\{ G(0,\alpha_{2},\dots,\alpha_{k+1};z)\left|\begin{smallmatrix}\alpha_{2},\dots,\alpha_{k+1}\in\left\{\tau,\frac{1}{\tau},0\right\}\\z\in\{1,\tau\}\end{smallmatrix}\right.\right\}\\\subseteq{}& \mathfrak g^{(\tau)}_{k+1}(2).\end{split}\end{align}Here, the rationales behind both the ``$\in$'' step and the ``$ \subseteq$'' step are identical to what we provided for \eqref{eq:X3_per_struct}. For the ``$ \subseteq$'' step, the fibration output resides in the following $ \mathbb Z$-module:\begin{align}
\Span_{\mathbb Z}\left\{ (\pi i)^{\ell}G(\beta_{1},\dots,\beta_{j};\tau)Z_{m}\in\mathbb C\left| \begin{smallmatrix}\beta_1,\dots,\beta_j\in\{-1,0,1\};Z_{m}\in\mathfrak{Z}_{m}(1)\\j,\ell,m\in\mathbb Z_{\geq0};j+\ell+ m=k+1\\\end{smallmatrix} \right.\right\},
\end{align}
as we inductively apply the GPL recursion \eqref{eq:GPL_rec} (with $ z=\tau$ and $z=1$) to the GPL differential form \eqref{eq:GPL_diff_form} \big[where $ \D\log(\alpha_{j-1}-\alpha_{j}),\D\log(\alpha_{j+1}-\alpha_{j})\in\big\{\D\log\tau,\D\log(\tau-1),\D\log\left( \frac{1}{\tau}-1 \right),\D\log\left(\tau-\frac{1}{\tau} \right),0\big\}$\big]. Taking  into account\begin{align}
\log\mathsf B_4(\tau)\colonequals{}&\log \frac{-\tau}{(1-\tau)^2}\equiv\pi i+G(0;\tau)-2G(1;\tau)\pmod{2\pi i\mathbb{Z}},
\end{align}we may check that  $\mathsf X_4^{(k)}(\tau)\in\pi i\log^k\mathsf B_4(\tau)+\Span_{\mathbb Q} \big\{\widetilde{\mathsf X}_4^{(j)}(\tau)\log^{k-j}\mathsf B_4(\tau)\big|0\leq j\leq k\big\}\subseteq \mathfrak g^{(\tau)}_{k+1}(2) $.

Likewise,  one can verify that   $ \big\{\widetilde {\mathsf \Xi}_4^{(k)}(\tau),\mathsf \Xi_4^{(k)}(\tau)\big\}\subset i \mathfrak g^{(\tau)}_{k+1}(2)$ holds for all  $k\in\mathbb Z_{\geq0}$.

Thus far, we see that \begin{align}
\widetilde{\mathsf X}_4^{(n_{2})}(\tau)\mathsf X_4^{(n_{3})}(\tau)\in{}& \mathfrak g^{(\tau)}_{n_2+n_3+2}(2),\\\widetilde{\mathsf \Xi}_4^{(n_{2})}(\tau)\mathsf \Xi_4^{(n_{3})}(\tau)\in{}& \mathfrak g^{(\tau)}_{n_2+n_3+2}(2).
\end{align} As we expand\begin{align}
\frac{\mathsf \tau+1}{\tau-1}\frac{1}{\tau}=\frac{2}{\tau-1}-\frac{1}{\tau},
\end{align} and invoke the closure property \eqref{eq:cl_int_N} for integrations over $\tau$, we may deduce $ \pi^2 m_k(1+x_1+x_2+x_3)\in \mathfrak g^{(1)}_{k+2}(2)\cap\mathbb R=\mathfrak Z_{k+2}(2)$. \end{proof}

Thanks to the data mine of Bl\"umlein--Broadhurst--Vermaseren \cite{MZVdatamine2010},
we can reduce every AMZV (up to weight $8$) to a polynomial in $ \mathbb Q[\log2,\pi^2,\zeta_3,\zeta_{-3,1},\zeta_{5},\linebreak\zeta_{-3,1,1},\dots]$, using a  look-up table that ships with \texttt{HyperInt}. This leads us to the evaluations in Table \ref{tab:mk_eval}.

All the entries in the aforementioned look-up table for $ \mathfrak Z_k(2),1\leq k\leq8$ are analytically provable by Deligne's standard relations  \cite[Definition 1.1]{Zhao2010} for  $\mathbb Q$-linearly dependent multiple polylogarithms.\footnote{Some provable non-standard $ \mathbb Q$-linear relations in $ \mathfrak Z(N)$ are covered by Au's method \cite[\S5.1]{Au2022a}, where $ N\in\{6,8,10,12\}$.}

Before closing this section, we compare our results to  some related works on (hyper-)Mahler measures.

In their foundational paper \cite[Theorem 3]{KurokawaLalinOchiai2008}, Kuro\-kawa, Lal\'in, and Ochiai established a
remarkable connection between  the  $ \mathbb Q$-vector space $ \mathfrak Z_k(1)$ incorporating MZVs of weight $k$ and the hyper-Mahler measure $ m_k(1+x_1)$ for $k\in\mathbb Z_{>1}$:\begin{align}
m_k(1+x_1)=(-1)^kk!\sum_{\substack{a_1+\dots+a_n=k\\a_j\geq2,1\leq j\leq n}}\frac{\zeta_{{a_1}, \dots,a_n}}{2^{2n}}\in\mathfrak Z_k(1).\label{eq:KLOsum}
\end{align}The same extends to the $k=1$ case, where $ m_1(1+x_1)=0$ and  an empty sum over  $ \mathfrak Z_1=\{0\}$ must vanish.\

  Using Lal\'in's method \cite{Lalin2003}, Kurokawa--Lal\'in--Ochiai   originally demonstrated  \eqref{eq:KLOsum}  by writing the hyper-Mahler measures  $ m_k(1+x_1)$ as analogs of the Drinfel'd integral representation of MZVs (see \cite[\S2]{Drinfeld1990} or \cite[\S9]{Zagier1994}), the latter of which  are special cases of Goncharov--Manin periods \cite{GoncharovManin2004} on $ \mathfrak M_{0,n}$.

Our Theorems \ref{thm:Zk(3)} and \ref{thm:Zk(2)}  extend the Kurokawa--Lal\'in--Ochiai sum rule \eqref{eq:KLOsum} to Goncharov--Deligne periods in $ \mathfrak Z_{k+1}(3)$ and $ \mathfrak Z_{k+2}(2)$ for $ k\in\mathbb Z_{>0}$:\begin{align}
\pi i\,m_k^{}(1+x_1+x_2)\in {}&\mathfrak Z_{k+1}^{}(3),\\(\pi i)^{2} m_k^{}(1+x_1+x_2+x_{3})\in {}&\mathfrak Z_{k+2}^{}(2).
\end{align}If we set $ m_0(1+x_1+x_2)=m_0(1+x_1+x_2+x_{3})=1$, then the same is true for  $k=0$.

The hyper-Mahler measures for the multivariate family $ R_m(x_1,\dots, x_m,z)=z+\prod_{j=1}^m\frac{1-x_j}{1+x_j}$  are expressible as $ \mathbb Q$-linear combinations of multiple polylogarithms whose arguments are fourth roots of unity, as established by Lal\'in--Lechasseur \cite[Theorem 1.1]{LalinLechasseur2016}. The results of  Lal\'in--Lechasseur hinged on a representation of $ m_k(a+x_{1})$ via multi-dimensional polylogarithms of $|a|^{-2\sgn\log |a|}$ \cite[Theorem 4.5]{LalinLechasseur2016} (extending a result of Akatsuka \cite[Theorem 7]{Akatsuka2009}, in a similar spirit as Borwein--Borwein--Straub--Wan \cite[Theorem 26]{BBSW2012Mahler}) and subsequent computations of certain motivic periods on   $ \mathfrak M_{0,n}$ \cite[\S\S7--8]{LalinLechasseur2016}.

We are not sure whether there are systematic classifications of all  the hyper-Mahler measures $ m_k\big(1+\sum_{j=1}^\ell x_j\big)$, where $k$ and $\ell $ are positive integers. The  conjectural characterizations of $ m_1\big(1+\sum_{j=1}^4x_j\big)$ and  $ m_1\big(1+\sum_{j=1}^5x_j\big)$ by Rodr\'iguez-Villegas \cite[\S8]{BoydLindRVDeninger2003} seem to suggest that the moduli spaces $\mathfrak M_{g,n} $ for $g>0$   are needed (probably by the routes of elliptic polylogarithms \cite{Rogers2011IMRN,BlochKerrVanhove2015,Samart2020} and elliptic multiple zeta values \cite{ZerbiniThesis,BSZ2019,ZagierZerbini2020}) to quantify these logarithmic Mahler measures for polynomials in $4$ and $5$ variables.

  At present, we do not have natural generalizations of Proposition \ref{prop:WS} to \begin{align}\int_0^\infty J_{1+\frac{s}{2}}\left(\sqrt{u}x\right ) \frac{J_{0}^3(x)}{x^{s/2}}\D x\text{ or }\int_0^\infty J_{1+\frac{s}{2}}\left(\sqrt{u}x\right ) \frac{J_{0}^{4}(x)}{x^{s/2}}\D x,\end{align}where $s$ is generic.
 On a different note, for $ s=-2$, Kluyver's random walk integrals \cite{Kluyver1906}\begin{align}
\int_0^\infty J_{0}\left(\sqrt{u}x\right ) J_{0}^3(x)x\D x\text{ and }\int_0^\infty J_{0}\left(\sqrt{u}x\right ) J_{0}^4(x)x\D x
\end{align} have been scrutinized by Borwein and coworkers \cite{BNSW2011,BSWZ2012,BSW2013}. A potential link from these random walk integrals to the two conjectures of  Rodr\'iguez-Villegas \cite[\S8]{BoydLindRVDeninger2003} has been explored by Straub--Zudilin \cite[\S\S5--7]{StraubZudilin2018} and Zudilin \cite[Chapter 6]{BrunaultZudilin2020}.

\section{Some infinite series behind hyper-Mahler measures\label{sec:series_Zk(N)}}

\subsection{Non-linear Euler sums revisited}We open this section with a non-exhaustive  overview of some  infinite sums that revolve around
the harmonic numbers
of order $r$  \begin{align} \mathsf H_{n}^{(r)}\colonequals\sum_{k=1}^\infty\left[ \frac{1}{k^r} -\frac{1}{(k+n)^{r}}\right],\quad n\in\mathbb C\smallsetminus\mathbb Z_{<0},\end{align}which become   $ \mathsf H_n^{(r)}=\sum_{k=1}^n\frac1{k^r}$ for $ n\in\mathbb Z_{>0}$. Aside from their connections to the hyper-Mahler measures  (see \S\S\ref{subsec:seriesW3}--\ref{subsec:seriesW4} for details) investigated in the current work,
these infinite series involving harmonic numbers  play significant r\^oles in the quantitative understanding of Feynman diagrams in quantum electrodynamics and quantum chromodynamics \cite{Broadhurst1996,KalmykovVeretin2000,DavydychevKalmykov2001,DavydychevKalmykov2004,MZVdatamine2010}.

Studies of ``linear Euler sums'' like \begin{align}
\sum_{n=1}^\infty\frac{\mathsf H_n^{(r)}}{n^{s}}=\zeta_{s,r}+\zeta_{r+s},\quad r\in\mathbb Z_{>0},s\in\mathbb Z_{>1}
\end{align}were initiated by Euler \cite{Euler1775}, in response to Goldbach's challenge  in 1742 \cite[\S1]{BZB2008}.  Borwein--Zucker--Boersma \cite{BZB2008} generalized the Euler--Goldbach double sums to $ [p,q](s,t)\colonequals\sum_{m>n>0}\frac{\chi_{p}(m)\chi_q(n)}{m^sn^t}$ for Dirichlet characters $p$ and $q$. This allows one to convert infinite sums like \cite[Table 1]{BZB2008}\begin{align}
\sum_{n=1}^\infty\frac{\mathsf H_n^{(r)}}{n^{s}},\quad
\sum_{n=1}^\infty\frac{(-1)^n\mathsf H_n^{(r)}}{n^{s}},\quad
\sum_{n=1}^\infty\frac{\mathsf H_n^{(r)}}{(2n-1)^{s}},\quad
\sum_{n=1}^\infty\frac{(-1)^n\mathsf H_n^{(r)}}{(2n-1)^{s}}
\end{align}into a single integral over polylogarithmic expressions \cite[Table 2]{BZB2008}. Although predating Brown's work  \cite{Brown2009a,Brown2009arXiv,Brown2009b}, the integral representations of Borwein--Zucker--Boersma \cite[Table 2]{BZB2008} belong to Brown's $ L(\mathfrak M_{0,n})$ class.
 A linear Euler sum $ \zeta_{- 3,1}$ in its polylogarithmic avatar   $ \zeta_{- 3,1}=2\Li_4\left( \frac{1}{2} \right)-\frac{\pi^{4}}{48}+\frac{7\zeta_3\log2}{4}-\frac{\pi^2\log^22}{12}+\frac{\log^42}{12}$  \cite[(5.5)]{BZB2008} featured prominently in the Laporta--Remiddi formula \cite[(5)]{LaportaRemiddi1996} for the 3-loop contribution to electron's magnetic moment.

The following ``non-linear Euler sums'' \begin{align}
\sum_{n=1}^\infty\frac{\left[\mathsf H_n^{(1)}\right]^2}{n^{2}},\quad
\sum_{n=1}^\infty\frac{(-1)^{n}\left[\mathsf H_n^{(1)}\right]^2}{n^{2}}, \quad
\sum_{n=1}^\infty\frac{\left[\mathsf H_n^{(2)}\right]^2}{n^{2}}\label{eq:nonlin_Euler_eg}
\end{align}had been considered by Borwein--Borwein \cite{BorweinBorwein1995zeta4}, De Doelder \cite{DeDoelder1991}, and Mez\H{o} \cite{Mezo2014} before   Panzer's \texttt{HyperInt} package \cite{Panzer2015} became available. In retrospect, these authors have effectively  converted their series of interest into  members of  $ L(\mathfrak M_{0,n})$ and have  referred to the classical treatise on polylogarithms \cite{Lewin1981} for integral evaluations, without regard to the algebraic geometry of the moduli spaces $ \mathfrak M_{0,n}$. Like their linear brethren, the non-linear Euler sums  found their ways into perturbative expansions in high energy physics. While considering on-shell charge renormalization, Broadhurst \cite[(42)]{Broadhurst1996} differentiated a certain hypergeometric expression and encountered non-linear Euler sums in the form of\begin{align}
\sum_{n=1}^\infty\frac{1}{(2n-1)^a}\prod_{i=1}^{k-1}\sum_{\ell_i=1}^{n-1}\frac{1}{(2\ell_i-1)^{b_i}}.
\label{eq:BroadhurstEuler}\end{align}More sophisticated analogs of such infinite series  showed up in the $\varepsilon$-expansions of Kalmykov--Veretin \cite[(4)]{KalmykovVeretin2000}, Davydychev--Kalmykov \cite[(1.11)]{DavydychevKalmykov2001} and Kalmykov--Ward--Yost \cite[Theorems A and B]{Kalmykov2007}.

For the prototypic non-linear Euler sums like those in \eqref{eq:nonlin_Euler_eg}, we have the following characterization of their period structure, which in turn, generalizes the Xu--Wang algorithm \cite{XuWang2020}.

\begin{theorem}[Non-linear Euler sums, HPLs, and AMZVs]\label{thm:nonlin_Euler}

\begin{enumerate}[leftmargin=*,  label=\emph{(\alph*)},ref=(\alph*),
widest=a, align=left]
\item For $ |z|<1$ and   $ r_1,\dots,r_M\in\mathbb Z_{>0}$, we have \begin{align}
\sum_{n=1}^\infty \left[\prod_{j=1}^M\mathsf H_{n}^{(r_j)}\right]z^n\in{}&\frac{1 }{1-z}\Span_{\mathbb Q}\left\{ L_{\bm A}(z)\left| w(\bm A)=\sum_{j=1}^M r_j\right.\right\}\equiv\frac{\mathfrak G^{(z)}_{\sum_{j=1}^M r_j}(1)}{1-z},\label{eq:nonlin_Euler_per}
\end{align}where the multi-dimensional polylogarithms $ L_{\bm A}(z)$ are defined by \eqref{eq:mLi_defn}, and the  $\mathbb Q $-vector space $ \mathfrak G^{(z)}_k(1)$ is given in \eqref{eq:Gkz(N)_Qvec}.
 By extension,  we have  (alternating) MZV representations (cf.\  \cite[Theorems 2.2 and 4.2]{XuWang2020})\begin{align}
\sum_{n=1}^\infty \left[\prod_{j=1}^M\mathsf H_{n}^{(r_j)}\right]\frac{1}{n^{s+1}}\in{}&\mathfrak Z_{k+1}(1),\label{eq:nonlin_Euler_Zk(1)}\\
\sum_{n=1}^\infty \left[\prod_{j=1}^M\mathsf H_{n}^{(r_j)}\right]\frac{(-1)^{n}}{n^{s}}\in{}&\mathfrak Z_{k}(2),\label{eq:nonlin_Euler_Zk(2)}\end{align}where $ s\in\mathbb Z_{>0}$ and $ k=s+\sum_{j=1}^M r_j$.\item Define\begin{align}
\smash[t]{\overline{\mathsf H}}_n^{(r)}\colonequals\sum_{k=1}^n\frac{(-1)^{k-1}}{k^r},\quad n\in\mathbb Z_{>0}.
\end{align}For $ |z|<1$,  $ r_1,\dots,r_M\in\mathbb Z_{>0}$, and  $ \overline{r}_1,\dots,\overline{r}_{\overline{M}}\in\mathbb Z_{>0}$, we have
\begin{align}
\sum_{n=1}^\infty \left[\prod_{j=1}^M\mathsf H_{n}^{(r_j)}\right]\left[\prod_{\smash[b]{\overline{j}}=1}^{\overline M}\smash[t]{\overline{\mathsf H}}_{n}^{(\overline{r}_{\overline{j}})}\right]z^n\in{}&\frac{\mathfrak G^{(z)}_{\sum_{j=1}^M r_j+\sum_{\overline{j}=1}^{\overline{M}} \overline{r}_{\overline{j}}}(2)}{1-z},\label{eq:nonlin_Euler_per_alt}
\end{align}where $ \mathfrak G^{(z)}_k(2)$ [defined in \eqref{eq:Gkz(N)_Qvec}] is  a subspace of  $ \mathfrak H^{(z)}_k(2)$  [defined in \eqref{eq:Hkz(N)_Qvec}], the latter being the  $\mathbb Q $-vector space spanned by all the Remiddi--Vermaseren harmonic polylogarithms (HPLs) \cite{RemiddiVermaseren2000} of weight $k$.  By extension,  we have   AMZV representations (cf.\  \cite[Theorem 4.2]{XuWang2020})\begin{align}\sum_{n=1}^\infty \left[\prod_{j=1}^M\mathsf H_{n}^{(r_j)}\right]\left[\prod_{\smash[b]{\overline{j}}=1}^{\overline M}\smash[t]{\overline{\mathsf H}}_{n}^{(\overline{r}_{\overline{j}})}\right]\frac{1}{n^{s+1}}\in{}&\mathfrak Z_{k+1}(2),\label{eq:nonlin_Euler_Zk(2)_alt1}\\\sum_{n=1}^\infty \left[\prod_{j=1}^M\mathsf H_{n}^{(r_j)}\right]\left[\prod_{\smash[b]{\overline{j}}=1}^{\overline M}\smash[t]{\overline{\mathsf H}}_{n}^{(\overline{r}_{\overline{j}})}\right]\frac{(-1)^{n}}{n^{s}}\in{}&\mathfrak Z_{k}(2),\label{eq:nonlin_Euler_Zk(2)_alt2}
\end{align} where $ s\in\mathbb Z_{>0}$ and $ k=s+\sum_{j=1}^M r_j+\sum_{\overline{j}=1}^{\overline{M}} \overline{r}_{\overline{j}}$. \end{enumerate} \end{theorem}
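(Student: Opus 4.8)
The plan is to route everything through the two generating-function identities \eqref{eq:nonlin_Euler_per} and \eqref{eq:nonlin_Euler_per_alt}, from which the (alternating) multiple-zeta-value statements follow by applying a weight-raising integral operator and then specializing the variable. The first move is to linearize the products of harmonic numbers. By Hoffman's quasi-shuffle (stuffle) algebra \cite{Hoffman1992}, each product $\prod_{j=1}^M\mathsf H_n^{(r_j)}$ is a $\mathbb Z$-linear combination of \emph{nested} harmonic sums
\begin{align}
\mathsf H_n^{(a_1,\dots,a_p)}\colonequals\sum_{n\geq k_1>\dots>k_p\geq1}\prod_{i=1}^p\frac{1}{k_i^{a_i}},
\end{align}
each of weight $w(\bm a)=\sum_i a_i=\sum_j r_j$, obtained by splitting a double sum $\sum_{k\leq n}\sum_{\ell\leq n}$ into the ranges $k>\ell$, $k<\ell$, and the diagonal $k=\ell$ (which merges two indices and adds their exponents, preserving weight). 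Interchanging the order of summation then gives, for a single nested sum,
\begin{align}
\sum_{n=1}^\infty\mathsf H_n^{(\bm a)}z^n=\sum_{k_1>\dots>k_p\geq1}\frac{1}{\prod_i k_i^{a_i}}\sum_{n\geq k_1}z^n=\frac{L_{\bm a}(z)}{1-z},
\end{align}
with $L_{\bm a}(z)$ the multi-dimensional polylogarithm of \eqref{eq:mLi_defn}. Summing over the stuffle decomposition places $\sum_n[\prod_j\mathsf H_n^{(r_j)}]z^n$ in $\tfrac{1}{1-z}\mathfrak G^{(z)}_{\sum_j r_j}(1)$, which is \eqref{eq:nonlin_Euler_per}.

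For part (b) I would run the same reduction, now letting the characters of $\overline{\mathsf H}_n^{(\bar r)}=-\sum_{k\leq n}(-1)^k k^{-\bar r}$ propagate. The products collapse to \emph{signed} nested sums with each index carrying a sign $\sigma_i\in\{\pm1\}$ (signs multiply when indices merge on the diagonal) and total weight $\sum_j r_j+\sum_{\bar j}\bar r_{\bar j}$; the same interchange yields
\begin{align}
\sum_{n=1}^\infty z^n\sum_{n\geq k_1>\dots>k_p\geq1}\prod_{i=1}^p\frac{\sigma_i^{k_i}}{k_i^{a_i}}=\frac{\Li_{a_1,\dots,a_p}(\sigma_1 z,\sigma_2,\dots,\sigma_p)}{1-z}.
\end{align}
The key structural claim is that, \emph{as a function of $z$}, this multiple polylogarithm is a $\mathbb Z$-linear combination of generalized polylogarithms $G(\bm\beta;z)$ whose letters lie in $\{0,1,-1\}$, of weight $w(\bm a)$. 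This follows by induction on weight from the differential equations satisfied by $\Li_{a_1,\dots,a_p}(x_1,\dots,x_p)$ in its first argument, whose connection one-forms are $\D z/z$ (reducing $a_1>1$) and $\sigma_1\,\D z/(1-\sigma_1 z)=\D z/(\sigma_1-z)$ with $\sigma_1\in\{\pm1\}$ (reducing $a_1=1$ and sending the leading twist to $\sigma_1\sigma_2\in\{\pm1\}$); the base case is $\Li_1(\sigma_1 z)=-G(\sigma_1;z)$. Hence each such generating function lies in $\tfrac{1}{1-z}\mathfrak G^{(z)}_{w}(2)$, which is \eqref{eq:nonlin_Euler_per_alt}; part (a) is recovered as the $\overline M=0$ case.

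To pass from generating functions to the (alternating) multiple zeta values, I would use the operator $I\colon F(z)\mapsto\int_0^z F(w)\,\D w/w$, which sends $\sum_n c_n z^n$ to $\sum_n c_n z^n/n$. Writing a generating function as $\phi(z)/(1-z)$ with $\phi\in\mathfrak G^{(z)}_w(N)$ and using $1/[w(1-w)]=1/w+1/(1-w)$, one application produces $\int_0^z\phi(w)\,\D w/w+\int_0^z\phi(w)\,\D w/(1-w)\in\mathfrak G^{(z)}_{w+1}(N)$, after which each further application merely prepends a letter $0$; this is exactly the closure property already invoked in \eqref{eq:cl_int_N}. Thus $I^{\,s+1}$ applied to \eqref{eq:nonlin_Euler_per} (respectively to \eqref{eq:nonlin_Euler_per_alt}) and evaluated at $z=1$ lands in $\mathfrak G^{(1)}_{k+1}(1)=\mathfrak Z_{k+1}(1)$ (respectively $\mathfrak Z_{k+1}(2)$), giving \eqref{eq:nonlin_Euler_Zk(1)} and \eqref{eq:nonlin_Euler_Zk(2)_alt1}. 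For the sums alternating in $n$ I would instead apply $I^{\,s}$ and specialize $z=-1$; rescaling the argument by $-1$ via the GPL scaling property \cite[(2.3)]{Frellesvig2016} turns the letters $\{0,\pm1\}$ into $\{0,\mp1\}$, exhibiting the value as a level-$2$ cyclotomic multiple zeta value and yielding \eqref{eq:nonlin_Euler_Zk(2)} and \eqref{eq:nonlin_Euler_Zk(2)_alt2}. Equivalently, all four extensions follow directly by splitting each nested sum according to whether $n=k_1$ or $n>k_1$, which produces honest (alternating) multiple zeta values of the advertised weight and level.

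The steps needing the most care are convergence and level bookkeeping at the specialization. Because $I$ only ever prepends the letters $0$ or $1$, the leading letter of every resulting GPL lies in $\{0,1\}$, so evaluation at $z=-1$ avoids the branch point at the letter $-1$, while the trailing letter, being unchanged, stays in $\{1,-1\}$, so the scaling identity (valid for nonzero last letter) applies and the resulting CMZV at argument $1$ is convergent; the presence of at least one factor $n^{-1}$ (that is, $s\geq1$, or $s+1\geq2$) guarantees that the series themselves converge. One must also verify that the diagonal merges in the stuffle step preserve total weight—so the outputs sit in weight $k+1$ or $k$ as claimed—and that merging two alternating indices restores a trivial sign, keeping the level equal to $2$ rather than escalating. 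Granting these points, the membership relations \eqref{eq:nonlin_Euler_Zk(1)}, \eqref{eq:nonlin_Euler_Zk(2)}, \eqref{eq:nonlin_Euler_Zk(2)_alt1}, and \eqref{eq:nonlin_Euler_Zk(2)_alt2} follow, recovering and extending the Xu--Wang reductions \cite{XuWang2020}.
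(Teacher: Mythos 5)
Your proof is correct, but it takes a genuinely different route from the paper's. The paper establishes \eqref{eq:nonlin_Euler_per} and \eqref{eq:nonlin_Euler_per_alt} by induction on $M$ (and then on $\overline M$): it multiplies the generating function by $1-z$, telescopes via $\mathsf H_n^{(r)}-\mathsf H_{n-\smash{1}}^{(r)}=n^{-r}$ to reduce the $(M_0+1)$-fold product to lower-$M$ sums carrying extra factors $n^{-s}$, handles those by the GPL weight-raising steps \eqref{eq:LA_promo}--\eqref{eq:Lx_int} [resp.\ \eqref{eq:G_promo}], and reads off convergence at $z=\pm1$ from the observation that the first component of $\bm A$ is at least $s$. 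You instead linearize the product of harmonic numbers \emph{upfront} via Hoffman's quasi-shuffle algebra \cite{Hoffman1992} into nested (signed) harmonic sums and interchange the order of summation, obtaining the closed forms $L_{\bm A}(z)/(1-z)$ and $\Li_{a_1,\dots,a_p}(\sigma_1z,\sigma_2,\dots,\sigma_p)/(1-z)$ in a single step; your inductive differential-equation argument for the GPL representation of the latter is sound (and could be shortened further: it is immediate from \eqref{eq:MPL_GPL} together with the scaling property, since the letters $1/(\sigma_1\cdots\sigma_j z)$ rescale by $z$ to $\pm1$). Your operator $I$ with the partial fraction $1/[w(1-w)]=1/w+1/(1-w)$ reproduces exactly the integration the paper performs inside its induction, and your closing alternative---splitting each nested sum on $n=k_1$ versus $n>k_1$---proves the memberships \eqref{eq:nonlin_Euler_Zk(1)}--\eqref{eq:nonlin_Euler_Zk(2)_alt2} with no generating functions at all; this is precisely the generalization of the referee's remark recorded after Table \ref{tab:G_Mezo}, where the $M=1$ case is confirmed by writing $\mathsf H_n^{(r)}=n^{-r}+\sum_{0<m<n}m^{-r}$. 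What each approach buys: yours delivers explicit stuffle decompositions immediately (the entries of Table \ref{tab:G_Mezo} can be read off from them), whereas the paper's double induction stays closer to Mez\H{o}'s original derivation \cite{Mezo2014} and sets up the lifting machinery \eqref{eq:LA_promo}, \eqref{eq:Lx_int}, \eqref{eq:G_promo} that is reused verbatim in the later corollaries on (inverse) binomial sums. One small point worth making explicit: your careful convergence paragraph covers only $z=-1$; for the $z=1$ specializations \eqref{eq:nonlin_Euler_Zk(1)} and \eqref{eq:nonlin_Euler_Zk(2)_alt1} the first application of $I$ can produce a leading letter $1$, and it is the hypothesis $s\geq1$ (hence $s+1\geq2$ applications of $I$, the last of which prepends $0$) that guarantees the leading letter is $0$ and the GPL converges at argument $1$---your argument supplies this, but only implicitly.
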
\begin{proof}\begin{enumerate}[leftmargin=*,  label={(\alph*)},ref=(\alph*),
widest=a, align=left]
\item First, we note that \cite[\S5.1]{Mezo2014}\begin{align}
\sum_{n=1}^\infty \mathsf H_{n}^{(r)}z^n=\frac{\Li_{r}(z)}{1-z}
\end{align}fits into \eqref{eq:nonlin_Euler_per} when $M=1$. Suppose that \eqref{eq:nonlin_Euler_per} holds for positive integers $M$ up to a certain $M_{0}\in\mathbb Z_{>1}$. In the next paragraph, we show that the same will apply to the $M_{0}+1$ case.

According to our induction hypothesis, we have the following relation for  $ 1\leq M\leq M_0$:\begin{align}\begin{split}
\sum_{n=1}^\infty \left[\prod_{j=1}^M\mathsf H_{n}^{(r_j)}\right]\frac{z^n}{n}\in{}&\Span_{\mathbb Q}\left\{ \int_0^z\frac{L_{\bm A}(x)\D x}{x(1-x)}\left| w(\bm A)=\sum_{j=1}^M r_j\right.\right\}\\\subseteq{}&\Span_{\mathbb Q}\left\{  L_{\bm A}(z)\left| w(\bm A)=1+\sum_{j=1}^M r_j\right.\right\},\end{split}\label{eq:LA_promo}
\end{align} where the ``$ \subseteq$'' step involves the standard recursion for GPL [see \eqref{eq:GPL_rec}] and the conversions between MPL\ and GPL [see \eqref{eq:MPL_GPL}--\eqref{eq:GPL_MPL}]. Since \begin{align}
\int_0^z\frac{L_{a_1,a_{2},\dots,a_n}(x)}{x}\D x=L_{a_1+1,a_{2},\dots,a_n}(z),\label{eq:LA_promo1}
\end{align}it also follows that \begin{align}
\sum_{n=1}^\infty \left[\prod_{j=1}^M\mathsf H_{n}^{(r_j)}\right]\frac{z^n}{n^s}\in\Span_{\mathbb Q}\left\{  L_{\bm A}(z)\left| w(\bm A)=s+\sum_{j=1}^M r_j\right.\right\}\label{eq:Lx_int}
\end{align}for $ s\in\mathbb Z_{>0}$ and   $ 1\leq M\leq M_0$. As we note that $ \mathsf H_0^{(r)}=0$  and $ \mathsf H_n^{(r)}-\mathsf H_{n-\smash1}^{(r)}=\frac{1}{n^r}$, we may deduce the following result for $ k=\sum_{j=1}^{M_0+1} r_j$:{\small\begin{align}\begin{split}&
(1-z)\sum_{n=1}^\infty \left[\prod_{j=1}^{M_{0}+1}\mathsf H_{n}^{(r_j)}\right]z^n=\sum_{n=1}^\infty \left[\prod_{j=1}^{M_{0}+1}\mathsf H_{n}^{(r_j)}-\prod_{j=1}^{M_{0}+1}\mathsf H_{n-\smash[t]{1}}^{(r_j)}\right]z^n
\\\in{}&\mathbb Q\Li_{k}(z)+\Span_{\mathbb Q}\left\{ \left.\sum_{n=1}^\infty \left[\prod_{j=1}^{M'}\mathsf H_{n}^{(r_j')}\right]\frac{z^n}{n^s}\right|\begin{smallmatrix} k=s+\sum_{j=1}^{M '}r_j\\s\in\mathbb Z_{>0}\\1\leq M'\leq M_0\end{smallmatrix}\right\}\\\subseteq{}&\Span_{\mathbb Q}\left\{  L_{\bm A}(z)\left| w(\bm A)=k\right.\right\},\end{split}\label{eq:Mezo_ind}\end{align}}where the ``$ \subseteq$'' step  follows from induction.

Therefore, the relation in  \eqref{eq:nonlin_Euler_per} holds for all positive integers $M$.
As a consequence, we may also extend
 \eqref{eq:Lx_int} to all $ M\in\mathbb Z_{>0}$.

  As we closely examine the derivation of \eqref{eq:Lx_int}, we  see that on its right-hand side, the first component of the vector $ \bm A$ is at least $ s$. This implies that the left-hand side of \eqref{eq:Lx_int} converges at the point  $ z=1$ (resp.\ $z=-1$) when $ s\in\mathbb Z_{>1}$ (resp.\ $ s\in\mathbb Z_{>0}$). Specializing \eqref{eq:Lx_int}  to  the $ z=1$ (resp.\ $z=-1$) scenario, we arrive at \eqref{eq:nonlin_Euler_Zk(1)} [resp.\ \eqref{eq:nonlin_Euler_Zk(2)}].   \item Direct computations reveal that \begin{align}
\sum_{n=1}^\infty\smash[t]{\overline{\mathsf H}}_n^{(\overline{r})}z^{n}=-\frac{\Li_{\overline{r}}(-z)}{1-z}
\end{align}holds for $ |z|<1$, where \begin{align}
-\Li_{\overline{r}}(-z)=G\left( \smash[b]{\underset{\overline{r}-1}{\underbrace{0,\dots,0}}},1;-z \right)=\vphantom{\underset{r-1}{\underbrace{0,\dots,0}}}G\left( \smash[b]{\underset{\overline{r}-1}{\underbrace{0,\dots,0}}},-1;z \right)\in\mathfrak G^{(z)}_{\overline{r}}(2).
\end{align}Here, in the penultimate step, we have resorted to the scaling property  \cite[(2.3)]{Frellesvig2016} for GPLs: $ G(a_1,a_{2},\dots,a_k;-z)=G(-a_1,-a_{2},\dots,-a_k;z)$ if $ a_k\neq0$. This scaling property also entails the relation $\mathfrak G^{(z)}_k(2)=\mathfrak G^{(-z)}_k(2)$ for all $ k\in\mathbb Z_{>0}$.

 After this, repeated invocations of
the GPL recursion in \eqref{eq:GPL_rec} take us to\begin{align}
\sum_{n=1}^\infty\smash[t]{\overline{\mathsf H}}_n^{(\overline r)}\frac{z^{n}}{n^{s}}\in\mathfrak  G^{(z)}_{\overline r+s}(2),
\end{align}so it follows that \begin{align}
(1-z)\sum_{n=1}^\infty\mathsf H_n^{(r)}\smash[t]{\overline{\mathsf H}}_n^{(\overline{r})}z^{n}=\sum_{n=1}^\infty\smash[t]{\overline{\mathsf H}}_n^{(\overline{r})}\frac{z^{n}}{n^{r}}-\sum_{n=1}^\infty\mathsf H_n^{(r)}\frac{(-z)^{n}}{n^{\overline r}}\in\mathfrak G^{(z)}_{r+\overline r}(2),
\end{align}which confirms \eqref{eq:nonlin_Euler_per_alt} for $ M=\overline M=1$.

Subsequently, we can prove \eqref{eq:nonlin_Euler_per_alt} by inductions on $M$ and $\overline M$,  as we did for part (a).

Like what we have encountered before, the relation in  \eqref{eq:nonlin_Euler_per_alt} lifts to\begin{align}
\sum_{n=1}^\infty \left[\prod_{j=1}^M\mathsf H_{n}^{(r_j)}\right]\left[\prod_{\smash[b]{\overline{j}}=1}^{\overline M}\smash[t]{\overline{\mathsf H}}_{n}^{(\overline{r}_{\overline{j}})}\right]\frac{z^n}{n^{s}}\in{}&\mathfrak{G}^{(z)}_w(2)\label{eq:nonlin_Euler_per_alt_lift}
\end{align}for $ s\in\mathbb Z_{>0}$ and $ w=s+\sum_{j=1}^M r_j+\sum_{\overline{j}=1}^{\overline{M}} \overline{r}_{\overline{j}}$.  Here,  if $ s=1$, then the left-hand side of  \eqref{eq:nonlin_Euler_per_alt_lift}  is a  $ \mathbb Q$-linear combination of functions in the form of \begin{align}
\int_0^z\frac{G(\alpha_1,\dots,\alpha_k;x)}{x(1-x)}\D x=G(0,\alpha_1,\dots,\alpha_ k;z)-G(1,\alpha_1,\dots,\alpha_k;z)
\end{align}where $ \alpha_1,\dots,\alpha_k\in\{-1,0,1\}$, $ k=\sum_{j=1}^M r_j+\sum_{\overline{j}=1}^{\overline{M}} \overline{r}_{\overline{j}}$, so that it converges when $ z=-1$; if  $ s\in\mathbb Z_{>1}$, then the left-hand side of  \eqref{eq:nonlin_Euler_per_alt_lift}  is a  $ \mathbb Q$-linear combination of functions in the form of  \begin{align}
\int_0^z\frac{G(\alpha_1,\dots,\alpha_k;x)}{x}\D x=G(0,\alpha_1,\dots,\alpha_k;z)
\end{align}where $ \alpha_1,\dots,\alpha_k\in\{-1,0,1\}$, $ k=s-1+\sum_{j=1}^M r_j+\sum_{\overline{j}=1}^{\overline{M}} \overline{r}_{\overline{j}}$, so that it converges when $ z=\pm1$. Therefore, special cases of   \eqref{eq:nonlin_Euler_per_alt_lift}   lead us to \eqref{eq:nonlin_Euler_Zk(2)_alt1} and \eqref{eq:nonlin_Euler_Zk(2)_alt2}.    \qedhere\end{enumerate}\end{proof}\begin{remark}In the statements of the theorem above and its corollaries in  two subsections
to follow, we only pinpoint the $ \mathbb Q$-vector spaces in which the infinite sums reside. The accompanying proofs contain effective algorithms to evaluate each individual  series explicitly.

For example, in Table \ref{tab:G_Mezo}, we compute a few generating functions by the procedures in the proof of the theorem above, and convert some of them to classical polylogarithms via the  Frellesvig--Tommasini--Wever method \cite{Frellesvig2016}. \begin{table}[t]\caption{Some generating functions of Mez\H{o} type \label{tab:G_Mezo}}
\begin{scriptsize}\begin{align*}\begin{array}{c|r@{}c@{}l}\hline\hline r&\multicolumn{3}{c}{ \mathsf G^{(r),m}(z)\colonequals \displaystyle\sum_{n=1}^\infty\left[\mathsf H_n^{(r)}\right]^mz^n,0<|z|<1\vphantom{\frac{\int}{\int}}}\\\hline1&\mathsf G^{(1),1}(z)&{}={}&\displaystyle\frac{\Li_1(z)}{1-z}\vphantom{\dfrac{\frac\int1}{\int}}\\[8pt]&\mathsf G^{(1),2}(z)&{}={}&\displaystyle\frac{\Li_2(z)+2L_{1,1}(z)}{1-z}=\frac{[\Li_1(z)]^2+\Li_2(z)}{1-z} \\[8pt]&\mathsf G^{(1),3}(z)&{}={}&\displaystyle\frac{\Li_3(z)+3L_{2,1}(z)+3L_{1,2}(z)+6L_{1,1,1}(z)}{1-z}\\&&{}={}&\displaystyle\frac{\frac{\pi^{2}}{2}\Li_1(z)+\left[ \Li_1 (z)\right]^3+\frac{3}{2}\left[ \Li_1 (z)\right]^2\log z+3\Li_3(1-z)+\Li_3(z)-3\zeta_3}{1-z}\\[8pt]&\mathsf G^{(1),4}(z)&{}={}&\displaystyle\frac{\Li_4(z)+4L_{3,1}(z)+6L_{2,2}(z)+4L_{1,3}(z)}{1-z}\\&&&{}+\dfrac{12L_{2,1,1}(z)+12L_{1,2,1}(z)+12L_{1,1,2}(z)+24L_{1,1,1,1}(z)}{1-z}\\&&{}={}&\displaystyle\frac{-4\zeta_3\Li_1(z)+\frac{5\pi ^2}{3}  \left[ \Li_1 (z)\right]^2-\frac{2}{3} \left[ \Li_1 (z)\right]^3[2\log(-z)-3\log z]+\frac{1}{3} \left[ \Li_1 (z)\right]^4+\frac{2 \pi ^4}{9}}{1-z}\\[5pt]&&&{} \displaystyle+\frac{[\Li_2(z)]{}^2-4 \Li_1(z) \Li_3(z)-8 \Li_4\left(\frac{1}{1-z}\right)-8 \Li_4\left(\frac{z}{z-1}\right)-12 \Li_4(1-z)-7 \Li_4(z)}{1-z}\\[8pt]\hline2&\mathsf G^{(2),1}(z)&{}={}&\displaystyle\frac{\Li_2(z)}{1-z}\vphantom{\dfrac{\frac\int1}{\int}}\\[8pt]&\mathsf G^{(2),2}(z)&{}={}&\displaystyle\frac{\Li_4(z)+2L_{2,2}(z)}{1-z}\\&&{}={}&\dfrac{4\zeta_3\Li_1(z)+\frac{\pi ^2}{3} [ \Li_1(z)]{}^2-\frac{2}{3} [\Li_1(z)]^3 \log(-z)-\frac{1}{3} \left[ \Li_1 (z)\right]^4+\frac{2 \pi ^4}{45}}{1-z}\vphantom{\frac{\int}{\int}}\\&&&{}\displaystyle+\frac{[\Li_2(z)]{}^2-4 \Li_1(z) \Li_3(z)-4 \Li_4\left(\frac{1}{1-z}\right)-4 \Li_4\left(\frac{z}{z-1}\right)-3 \Li_4(z)}{1-z}\\[8pt]\hline\hline\end{array}\end{align*}

\begin{align*}\begin{array}{c|l}\hline\hline (r,m,\ell)&\multicolumn{1}{c}{ \mathsf G^{(r),m}_\ell(z)\colonequals \displaystyle\sum_{n=1}^\infty\left[\mathsf H_n^{(r)}\right]^m\frac{z^n}{n^{\ell}},|z|<1\vphantom{\frac{\int}{\int}}}\\\hline(r,1,\ell)&\Li_{\ell+r}(z)+L_{\ell,r}(z),\quad r,\ell\in\mathbb Z_{>0}\vphantom{\dfrac11}\\[3pt](1,2,1)&\Li_3(z)+2L_{2,1}(z)+L_{1,2}(z)+2L_{1,1,1}(z)\\[3pt](1,2,2)&\Li_4(z)+2L_{3,1}(z)+L_{2,2}(z)+2L_{2,1,1}(z)\\[3pt](1,3,1)&\Li_4(z)+3L_{3,1}(z)+3L_{2,2}(z)+L_{1,3}(z)\\&{}+6L_{2,1,1}(z)+3L_{1,2,1}(z)+3L_{1,1,2}(z)+6L_{1,1,1,1}(z)\\[3pt]\hline\hline\end{array}\end{align*}
\end{scriptsize}\end{table}Here, the generating functions $ \mathsf G^{(1),2}(x)$ and $  \mathsf G^{(1),3}(x)$ have been reported by Mez\H{o} \cite[Theorem 9]{Mezo2014} in the form of  classical polylogarithms. Our induction step  \eqref{eq:Mezo_ind} is inspired by Mez\H{o}'s derivation of  $ \mathsf G^{(1),2}(x)$ \cite[(24)]{Mezo2014}.

For $ r,\ell\in\mathbb Z_{>0}$ and $ |z|<1$, the formula $ \sum_{n=1}^\infty\mathsf H_n^{(r)}\frac{z^n}{n^{\ell}}=\Li_{\ell+r}(z)+\Li_{\ell,r}(z,1)\in\mathfrak G_{\ell+r}^{(z)}(1)$ in Table \ref{tab:G_Mezo} builds inductively on  GPL recursions, as in our proof of
 \eqref{eq:Lx_int}. [As pointed out by an anonymous referee, one can  confirm the same by writing $ \mathsf H_n^{(r)}=\frac{1}{n^r}+\sum_{0<m<n}\frac{1}{m^r}$ and appealing to the definition of MPLs in \eqref{eq:Mpl_defn}.] Likewise, for    $ r$, $\ell$, and $z$ in the same ranges, one can prove  $ \sum_{n=1}^\infty\overline{\mathsf H}_n^{(r)}\frac{z^n}{n^{\ell}}=-\Li_{\ell+r}(-z)-\Li_{\ell,r}(z,-1)\in\mathfrak G_{\ell+r}^{(z)}(2)$  recursively [or through an application of $\overline{ \mathsf H}_n^{(r)}=\frac{(-1)^{n-1}}{n^r}+\sum_{0<m<n}\frac{(-1)^{m-1}}{m^r}$ to \eqref{eq:Mpl_defn}].
       \eor\end{remark}
\begin{remark}From the generating functions given by the theorem above, one can construct moment sequences that express harmonic numbers. For instance, if we pick a generating function \begin{align}\mathsf G_s^{(r_1,\dots,r_M)}(z)\colonequals\sum_{n=1}^\infty \left[\prod_{j=1}^M\mathsf H_{n}^{(r_j)}\right]\frac{z^n}{n^{s}}\in\mathfrak G^{(z)}_{s+\sum_{j=1}^M r_j}(1)\label{eq:G1_gfun}\end{align}for certain positive integers $ r_1,\dots,r_M$  and  $ s$, then we can extract its Taylor coefficients by residue calculus:\begin{align}
\left[\prod_{j=1}^M\mathsf H_{n}^{(r_j)}\right]\frac{1}{n^{s}}={\counterint}_{\!\!\!\!|z|=1}\frac{\mathsf G_s^{(r_1,\dots,r_M)}(z)}{z^{n}}\frac{\D z}{2\pi iz}={\counterint}_{\!\!\!\!|w|=1}\frac{\mathsf G_s^{(r_1,\dots,r_M)}(1/w)}{(1/w)^{n}}\frac{\D w}{2\pi iw},
\end{align} where $ n\in\mathbb Z_{>0}$. In view of the jump discontinuities   \cite{Maitre2012,Panzer2015}  $ \mathsf G_s^{(r_1,\dots,r_M)}(1/w+i0^+)-\mathsf G_s^{(r_1,\dots,r_M)}(1/w-i0^+)\in\pi i\mathfrak g^{(w)}_{s-1+\sum_{j=1}^M r_j}(1)$ [cf.\ \eqref{eq:gk(N)_GH}] for $0<w<1$, we can reformulate the last contour integral in the complex $w $-plane as a moment relation\begin{align}
\left[\prod_{j=1}^M\mathsf H_{n}^{(r_j)}\right]\frac{1}{n^{s}}=\int_{0}^1 x^{n-1}\mathsf L_s^{(r_1,\dots,r_M)}(x)\D x,
\end{align} where $n\in\mathbb Z_{>0}$ and $ \mathsf L_s^{(r_1,\dots,r_M)}(x)\in\mathfrak g^{(x)}_{s-1+\sum_{j=1}^M r_j}(1)$.  Since the difference between the two sides of the equation above defines a bounded holomorphic function of $n$ for $ \R n>0$ that vanishes at all the positive integers, it is in fact identically vanishing for all  $ \R n>0$ \cite[Theorem 2.8.1]{AAR}. In particular, for the same  $ \mathsf L_s^{(r_1,\dots,r_M)}(x)\in\mathfrak g^{(x)}_{s-1+\sum_{j=1}^M r_j}(1)$ as given above, we have
\begin{align}\begin{split}&
\left[\prod_{j=1}^M\mathsf H_{n-\smash[t]{\frac{1}{2}}}^{(r_j)}\right]\frac{1}{(2n-1)^{s}}=\frac{1}{2^{s}}\int_{0}^1 x^{n-\frac{3}{2}}\mathsf L_s^{(r_1,\dots,r_M)}(x)\D x\\={}&\frac{1}{2^{s-1}}\int_{0}^1 X^{2n-2}\mathsf L_s^{(r_1,\dots,r_M)}(X^{2})\D X,\end{split}\label{eq:Hhalf_prod}
\end{align}a moment relation that will become useful later in Corollary \ref{cor:Sun}.   \eor\end{remark}
\subsection{Infinite series related to $ m_k(1+x_1+x_2)$\label{subsec:seriesW3}}
Following the footsteps of Broadhurst \cite[\S\S2 and 6]{Broadhurst1999}, Davydychev--Kalmykov \cite[\S2]{DavydychevKalmykov2004}, and Borwein--Borwein--Straub--Wan \cite[\S5]{BBSW2012Mahler} at an early stage of the present research, we   converted $ m_k(1+x_1+x_2)$ into infinite series  by differentiating a formula of Borwein--Straub--Wan--Zudilin \cite[Theorem 7.2]{BSWZ2012} (see also \cite[Theorem 4.2]{BorweinStraub2012Mahler}):\begin{align}
W_{3}(s)=\frac{3^{s+\frac{3}{2}}}{2 \pi }  \frac{\left[ {\Gamma \left(1+\frac{s}{2}\right)}{} \right]^2}{\Gamma \left(2+s\right)}{_3F_2}\left(\!\!\left.\begin{smallmatrix}1+\frac{s}{2},1+\frac{s}{2},1+\frac{s}{2}\\[2pt]1,\frac{3+s}{2}\end{smallmatrix}\right|\frac{1}{4}\right),\quad |s|<2.
\end{align}With  $ c_n\colonequals\frac{(n!)^{2}}{(2n+1)!}=\frac{1}{(n+1)\binom{2n+2}{n+1}}=\frac12\int_{0}^1 x^n(1-x)^n\D x$ and $ \mathfrak h_n^{(r)}\colonequals3\mathsf H_n^{(r)}-\mathsf H_{n+\smash{\frac{1}{2}}}^{(r)}+2 \delta _{1,r}\log \frac{3}{2}$, we found (cf.\ \cite[\S4]{Kalmykov2007})\begin{align}
\frac{\pi}{\sqrt{3}}\left.\dfrac{\D^k W_{3}(s)}{\D s^k}\right|_{s=0}\in\Span_{\mathbb Q}\left\{\left.\sum_{n=0}^\infty c_n\prod_{j=1}^M\mathfrak h_n^{(r_j)}\right|r_1,\dots,r_M\in\mathbb Z_{>0};\sum_{j=1}^Mr_j=k\right\},\label{eq:W3deriv}
\end{align}by slightly modifying the proof of \cite[Theorem 10]{BBSW2012Mahler}.

  In the terminology of Kalmykov--Ward--Yost \cite[\S2.1]{Kalmykov2007}, the infinite series appearing in \eqref{eq:W3deriv} are special cases of ``inverse binomial sums''. In the next two corollaries, we explore such infinite sums in broader context. As precursors to Goncharov--Deligne periods in $ \mathfrak Z_k(N)$, we will also need  new  $ \mathbb Q$-vector spaces\footnote{For   $ S\subset\mathbb C$, our notation $ \mathfrak g^S_{k,N}$ is related (but not identical) to Au's $ \mathsf {MZV}^S_k$ \cite[\S5.3]{Au2022a}.  The sets $ \mathfrak g_{k}[z_1,\dots,z_n;z](N)$ and $ \mathfrak g_{k,N}^{\{ z_1,\dots,z_n\}}$ are filtered by weight just as $ \mathfrak g_k^{(z)}(N)$, while  $ \mathfrak g_{k}[z_1,\dots,z_n;z](N)$  also behaves like \eqref{eq:cl_int_N} under GPL recursions with respect to the variable $z$. In the corollaries below, one will encounter  the  $ \mathbb Q$-vector spaces in \eqref{eq:gkNS_prep} and \eqref{eq:gkNS} as natural outcomes of  shuffle, recursion, or fibration of GPLs.} {\small\begin{align}
\mathfrak g_{k}[z_1,\dots,z_n;z](N)\colonequals{}&\Span_{\mathbb Q}\left\{(\pi i)^{k-\ell}G(\alpha_1,\dots,\alpha_j;z)Z_{\ell-j}\in\mathbb C\left|\begin{smallmatrix}\alpha_1,\dots,\alpha _j\in\{0\}\cup\{z_1,\dots,z_n\}\\Z_{\ell-j}\in\mathfrak Z_{\ell-j}(N)\\0\leq j\leq\ell\leq k\end{smallmatrix}\right.\right\},\label{eq:gkNS_prep}\\\mathfrak g_{k,N}^{\{ z_1,\dots,z_n\}}\colonequals{}&\Span_{\mathbb Q}\left\{\left.\prod_{j=1}^J G_{j}\right|\begin{smallmatrix}G_{j}\in \mathfrak g_{k_{j}}[z_1,\dots,z_n;z](N)\\z\in\{z_1,\dots,z_n\}\\\sum_{j=1}^Jk_{j}=k\end{smallmatrix}\right\},\label{eq:gkNS}
\end{align}}which extend $ \mathfrak g_k^{(z)}(N)=\mathfrak g_k\big[1,e^{2\pi i/N},\dots,e^{2\pi i(N-1)/N};z\big](N)$ [defined in\ \eqref{eq:gk(N)_GH}].\begin{corollary}[Inverse binomial sums via GPLs] \label{cor:inv_binom_sum}In the following, we set    $ r_1,\dots,r_M\in\mathbb Z_{>0}$ and  $ {r}'_1,\dots,{r}'_{{M'}}\in\mathbb Z_{>0}$.\begin{enumerate}[leftmargin=*,  label=\emph{(\alph*)},ref=(\alph*),
widest=a, align=left]
\item If\begin{align}
\begin{cases}|1-\xi|^{2}\leq 4|\xi|, \\
s\in\mathbb Z_{>0},  \\
\end{cases}\text{or }\;\begin{cases}|1-\xi|^{2}< 4|\xi|, \\
s\in\mathbb Z_{\geq0},  \\
\end{cases}
\end{align}  then (cf.\ \cite[(1.1), (5.1a), and   (5.1b)]{Kalmykov2007})\begin{align}
\sum_{n=1}^\infty \left[ \prod_{j=1}^M\mathsf H_{n-\smash[t]{1}}^{(r_j)} \right]\frac{\big(\frac{1+\xi}{1-\xi} \big)^{\delta_{0,s}}}{n^{s+1}\binom{2n}{n}}\left[ -\frac{(1-\xi)^{2}}{\xi} \right]^{n}\in\mathfrak g^{(\xi)}_{k+1}(2)\cap  \mathfrak g_{k+1,1}^{\big\{1,\frac{1}{1-\xi},\frac{\xi}{\xi-1}\big\}}\label{eq:KWY_inv_binom}
\end{align}where $ k=s+\sum_{j=1}^Mr_j$. More generally, for any moment  sequence\footnote{For example, the function $ f_1(x)=\log x\in \mathfrak H_{1}^{(x)}(1)$ produces a sequence $a_{n,1}=\frac{2}{n\binom{2n}{n}}\big[\mathsf H^{(1)}_{n-1}-\mathsf H^{(1)}_{2n-1}\big] $, while $f_2(x) =\log^2 x\in  \mathfrak H_{2}^{(x)}(1)$ brings us another sequence $ a_{n,2}=\frac{2}{n\binom{2n}{n}}\left\{\big[\mathsf H^{(1)}_{n-1}-\mathsf H^{(1)}_{2n-1}\big]^{2}+\mathsf H^{(2)}_{n-1}-\mathsf H^{(2)}_{2n-1}\right\}$.} $
a_{n,M'}=\int_0^1 x^{n-1}(1-x)^{n-1}f_{M'}(x)\D x, n\in\mathbb Z_{>0}
$ satisfying $ f_{M'}(x)\in\mathfrak H_{M'}^{(x)}(1)$, we have\begin{align}
\sum_{n=1}^\infty \left[ \prod_{j=1}^M\mathsf H_{n-\smash[t]{1}}^{(r_j)} \right]\frac{\big(\frac{1+\xi}{1-\xi} \big)^{\delta_{0,s}}}{n^{s}}\left[ -\frac{(1-\xi)^{2}}{\xi} \right]^{n}a_{n,M'}\in \mathfrak g^{(\xi)}_{k+1}(2)\cap  \mathfrak g_{k+1,1}^{\big\{1,\frac{1}{1-\xi},\frac{\xi}{\xi-1}\big\}}\label{eq:KWY_inv_binom_star}\tag{\ref{eq:KWY_inv_binom}$^*$}
\end{align} for  $ k=s+M'+\sum_{j=1}^Mr_j$, so long as the infinite series are convergent.
\item If\begin{align}
\begin{cases}|1+\tau^{2}|\leq 2|\tau|, \\
s\in\mathbb Z_{>0},  \\
\end{cases}\text{or }\;\begin{cases}|1+\tau^{2}|< 2|\tau|, \\
s\in\mathbb Z_{\geq0},  \\
\end{cases}
\end{align}   then we have \begin{align}\begin{split}&
\sum_{n=1}^\infty \left[\prod_{j=1}^M{{\mathsf H}}_{\smash[t]{}n-1}^{( {r}_{j})}\right]\left\{\prod_{\smash[b]{j'}=1}^{M'}\Big[{{\mathsf H}}_{\smash[t]{2}n-1}^{( {r}'_{j'})}+\smash[t]{\overline{\mathsf H}}_{\smash[t]{2}n-1}^{( {r}'_{j'})}\Big]\right\}\frac{\big(\frac{1-\tau^{2}}{1+\tau^{2}} \big)^{\delta_{0,s}}}{n^{s+1}\binom{2n}{n}}\left( \frac{1+\tau^{2}}{\tau} \right)^{2n}\\\in{}&\mathfrak g_{k+1}^{(\tau)}(4)\cap\mathfrak g_{k+1,2}^{\left\{ i,-i,\tau ,-\frac{1}{\tau},-\tau,\frac{1}{\tau}\right\}},\label{eq:KWY_inv_binom_alt}\end{split}
\end{align}where $ k=s+\sum_{j=1}^M r_j+\sum_{{j}'=1}^{{M'}} {r}'_{j'}$.
More generally, for any moment  sequence $
b_{n,M''}=\int_0^1 x^{n-1}(1-x)^{n-1}g_{M''}(x)\D x, n\in\mathbb Z_{>0}
$ satisfying $ g_{M''}(x)\in\mathfrak G_{M''}^{(x)}(1)$,  we have \begin{align}
\begin{split}{}&
\sum_{n=1}^\infty \left[\prod_{j=1}^M{{\mathsf H}}_{\smash[t]{}n-1}^{( {r}_{j})}\right]\left\{\prod_{\smash[b]{j'}=1}^{M'}\Big[{{\mathsf H}}_{\smash[t]{2}n-1}^{( {r}'_{j'})}+\smash[t]{\overline{\mathsf H}}_{\smash[t]{2}n-1}^{( {r}'_{j'})}\Big]\right\}\frac{\big(\frac{1-\tau^{2}}{1+\tau^{2}} \big)^{\delta_{0,s}}}{n^{s}}\left( \frac{1+\tau^{2}}{\tau} \right)^{2n}b_{n,M''}\\\in{}&\mathfrak g_{k+1}^{(\tau)}(4)\cap\mathfrak g_{k+1,2}^{\left\{ i,-i,\tau ,-\frac{1}{\tau},-\tau,\frac{1}{\tau}\right\}}\end{split}\label{eq:KWY_inv_binom_alt_star_star}\tag{\ref{eq:KWY_inv_binom_alt}$^{*}$}
\end{align}for  $ k=s+M''+\sum_{j=1}^M r_j+\sum_{{j}'=1}^{{M'}} {r}'_{j'}$, so long as the infinite series are convergent.
\end{enumerate}\end{corollary}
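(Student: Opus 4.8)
The plan is to reduce each inverse binomial sum to an iterated integral by combining the Beta-integral identity for $\frac{1}{n\binom{2n}{n}}$ with the generating functions supplied by Theorem~\ref{thm:nonlin_Euler}, and then to read off the resulting object in two ways: as a GPL in an auxiliary integration variable evaluated at a fixed endpoint, and as a GPL in $\xi$ itself. I treat part~(a) in detail; part~(b) is entirely parallel. First I would record $\frac{1}{n\binom{2n}{n}}=B(n,n+1)=\int_0^1 t^{n-1}(1-t)^n\,\D t$, so that with $w:=-\frac{(1-\xi)^2}{\xi}$ and $t^{n-1}(1-t)^n=\frac1t[t(1-t)]^n$,
\[
\sum_{n=1}^\infty \Big[\prod_{j=1}^M \mathsf H_{n-1}^{(r_j)}\Big]\frac{w^n}{n^{s+1}\binom{2n}{n}}=\int_0^1 \frac{\Phi_s\big(w\,t(1-t)\big)}{t}\,\D t,\qquad \Phi_s(Z):=\sum_{n\ge1}\Big[\prod_{j}\mathsf H_{n-1}^{(r_j)}\Big]\frac{Z^n}{n^s}.
\]
Using $\mathsf H_{n-1}^{(r)}=\mathsf H_n^{(r)}-n^{-r}$ to pass from $\mathsf H_n$ to $\mathsf H_{n-1}$, Theorem~\ref{thm:nonlin_Euler}(a) gives $\Phi_s(Z)\in\mathfrak G^{(Z)}_k(1)$ for $s\ge1$ and $\Phi_0(Z)\in\frac{1}{1-Z}\mathfrak G^{(Z)}_k(1)$, where $k=s+\sum_j r_j$. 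The convergence hypotheses $|1-\xi|^2\le4|\xi|$ (with $s\ge1$) or $|1-\xi|^2<4|\xi|$ (with $s\ge0$) are precisely what force $|w\,t(1-t)|\le|w|/4\le1$ on the path, legitimizing the termwise integration.

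The decisive computation is that $w\,t(1-t)=1$ has roots $t_\pm$ with $t_+=\frac{1}{1-\xi}$ and $t_-=\frac{\xi}{\xi-1}$, because $1-4/w=\frac{(1+\xi)^2}{(1-\xi)^2}$; moreover $t_++t_-=1$ and $t_+-t_-=\frac{1+\xi}{1-\xi}$. Substituting $Z=w\,t(1-t)$ into each $L_{\bm A}(Z)\in\mathfrak G^{(Z)}_k(1)$ (whose letters lie in $\{0,1\}$) and applying the GPL recursion~\eqref{eq:GPL_rec} together with the differential form~\eqref{eq:GPL_diff_form}, one rewrites $L_{\bm A}(w\,t(1-t))$, by induction on $w(\bm A)$, as a $\mathbb Q$-combination of GPLs in $t$ whose letters lie in $\{0,1,\frac{1}{1-\xi},\frac{\xi}{\xi-1}\}$ (the values $Z=0$ give $t\in\{0,1\}$, the values $Z=1$ give $t\in\{t_+,t_-\}$). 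In the $s=0$ case the partial fraction $\frac{1}{1-Z}=\frac{-1/w}{t_+-t_-}\big(\frac{1}{t-t_+}-\frac{1}{t-t_-}\big)$ produces exactly the factor $\frac{1-\xi}{1+\xi}$, which the stated prefactor $\big(\frac{1+\xi}{1-\xi}\big)^{\delta_{0,s}}$ is engineered to cancel, preserving the same alphabet. Integrating the residual weight $\frac1t$ over $t\in(0,1)$ by one further use of the closure property~\eqref{eq:cl_int_N}, with Panzer's logarithmic regularizations~\cite{Panzer2015}, raises the weight to $k+1$, evaluates at the endpoint $t=1$, and thereby lands the sum in $\mathfrak g_{k+1,1}^{\{1,\frac{1}{1-\xi},\frac{\xi}{\xi-1}\}}$.

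To secure the second membership $\mathfrak g^{(\xi)}_{k+1}(2)$, I would take the resulting function $F(\xi)$ and fiber it over the formal variable $\xi$: its letters $\frac{1}{1-\xi},\frac{\xi}{\xi-1}$, its endpoint $1$, and the basepoint $0$ can coincide with one another or reach $\{0,\infty\}$ only at $\xi\in\{-1,0,1\}$, so computing $\D F/\D\xi$ and integrating (again via~\eqref{eq:GPL_rec}--\eqref{eq:GPL_diff_form}) rewrites $F(\xi)$ as a $\mathbb Q$-combination of GPLs in $\xi$ with letters in $\{-1,0,1\}$, times powers of $\pi i$ and AMZVs, i.e.\ an element of $\mathfrak g^{(\xi)}_{k+1}(2)$. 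Part~(b) follows the same template with $w=\big(\frac{1+\tau^2}{\tau}\big)^2$: here $\mathsf H_{2n-1}^{(r)}+\overline{\mathsf H}_{2n-1}^{(r)}=2\sum_{m=1}^n(2m-1)^{-r}$ supplies the odd-index sums, the quadratic $w\,t(1-t)=1$ now introduces the fourth-root-of-unity letters $\pm i$ alongside $\tau,-\tau,\pm\tau^{-1}$, and one lands in $\mathfrak g^{(\tau)}_{k+1}(4)\cap\mathfrak g_{k+1,2}^{\{i,-i,\tau,-1/\tau,-\tau,1/\tau\}}$. The generalized forms~\eqref{eq:KWY_inv_binom_star} and~\eqref{eq:KWY_inv_binom_alt_star_star} are obtained by inserting the second Beta moment $a_{n,M'}=\int_0^1 x^{n-1}(1-x)^{n-1}f_{M'}(x)\,\D x$ (resp.\ $b_{n,M''}$) as an extra integration against $f_{M'}\in\mathfrak H^{(x)}_{M'}(1)$ (resp.\ $g_{M''}\in\mathfrak G^{(x)}_{M''}(1)$), which raises the weight by $M'$ (resp.\ $M''$) without enlarging the alphabet.

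I expect the main obstacle to be the boundary and endpoint regularizations: verifying convergence and the legitimacy of the termwise integration exactly on $|1-\xi|^2=4|\xi|$, where $Z$ attains the value $1$ at $t=\tfrac12$, and controlling the $\frac{1}{1-Z}$ singularity of $\Phi_0$ against the prefactor $\frac{1+\xi}{1-\xi}$ so that the shuffle-regularized GPLs stay well defined and retain the claimed alphabet. Keeping the two fibrations—over $t$ and over $\xi$—mutually consistent, and ensuring the regularizations respect both simultaneously, is the delicate bookkeeping on which the argument rests.
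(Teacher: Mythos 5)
Your part (a) is essentially the paper's own proof: the same Beta-integral reduction (the paper uses $\frac{1}{n\binom{2n}{n}}=\frac{1}{2}\int_0^1 x^{n-1}(1-x)^{n-1}\,\D x$ where you use $B(n,n+1)$, an immaterial difference), the same identification of the roots $\frac{1}{1-\xi},\frac{\xi}{\xi-1}$ of $1-w\,t(1-t)$, the same partial-fraction cancellation of the $s=0$ prefactor $\frac{1+\xi}{1-\xi}$, one further GPL recursion to raise the weight to $k+1$, and a closing fibration in $\xi$ for the $\mathfrak g^{(\xi)}_{k+1}(2)$ membership. Observe that the second membership in (a) comes for free exactly because the anchor set $\big\{1,\frac{1}{1-\xi},\frac{\xi}{\xi-1}\big\}$ contains the base point $1$ at which your GPLs end up being evaluated.

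That observation is where part (b) breaks, and your claim that it is ``entirely parallel'' conceals a genuine gap. By the definition \eqref{eq:gkNS}, membership in $\mathfrak g_{k+1,2}^{\left\{ i,-i,\tau ,-\frac{1}{\tau},-\tau,\frac{1}{\tau}\right\}}$ requires every GPL factor to be based at a point of the six-element set itself, and $1$ is (generically) not in that set. Your template --- rationalize with $x=\frac{1}{1+t^2}$, integrate $t$ over $(0,1)$, fibrate --- produces GPLs $G(\beta_1,\dots,\beta_{k+1};1)$ with letters in $\big\{0,\pm i,\pm\tau,\pm\frac{1}{\tau}\big\}$, which after fibration in $\tau$ yields only the first membership $\mathfrak g^{(\tau)}_{k+1}(4)$; this is precisely what the paper calls ``a weaker version'' of \eqref{eq:KWY_inv_binom_alt}. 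To secure the second membership, the paper runs a separate contour-deformation argument: it rewrites the half-line integral $\int_0^\infty(\cdots)h(t)\,\D t$ as a difference of two contours via the factor $\frac{\log(-t)}{2\pi i}$, collapses these onto tight loops enveloping the branch cuts of $f_k\big(\pm\frac{(1+\tau^2)t}{(1+t^{2})\tau}\big)$ and $g_{M''}\big(\frac{1}{1+t^2}\big)$ --- cuts whose endpoints lie exactly in $\big\{i,-i,\tau,-\frac{1}{\tau},-\tau,\frac{1}{\tau}\big\}$ --- and then integrates the jump discontinuities (members of $\pi i\,\mathfrak g^{(\cdot)}_{k-1}(2)$, resp.\ $\pi i\,\mathfrak g^{(\cdot)}_{M''-1}(1)$) along those cuts, so that all resulting GPLs are anchored at points of the six-element set. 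Your proposal contains no substitute for this step, so as written it proves \eqref{eq:KWY_inv_binom} and \eqref{eq:KWY_inv_binom_star} in full, but only the $\mathfrak g^{(\tau)}_{k+1}(4)$ halves of \eqref{eq:KWY_inv_binom_alt} and \eqref{eq:KWY_inv_binom_alt_star_star}. The boundary-convergence and regularization issues you flag at the end are real but routine; the missing idea is the relocation of base points by contour deformation.
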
\begin{proof}From the asymptotic behavior $ \frac{1}{n\binom{2n}n}=\frac{1}{4^{n}}\sqrt{\frac{\pi}{n}}\left[ 1+O\left( \frac{1}{n} \right) \right]$ as $n\to\infty $, we see that all the infinite sums of interest indeed converge for the prescribed ranges of $s$, $\xi$, and $\tau$. \begin{enumerate}[leftmargin=*,  label={(\alph*)},ref=(\alph*),
widest=a, align=left]\item First, we consider the $s=0$ case in \eqref{eq:KWY_inv_binom}. By \eqref{eq:nonlin_Euler_per}, we have {\small\begin{align}\begin{split}{}& \sum_{n=1}^\infty \left[ \prod_{j=1}^M\mathsf H_{n-\smash[t]{1}}^{(r_j)} \right]\frac{1}{n\binom{2n}{n}}\left[ -\frac{(1-\xi)^{2}}{\xi} \right]^{n}\\={}&-\frac{(1-\xi)^{2}}{2\xi}\sum_{n=1}^\infty \left[ \prod_{j=1}^M\mathsf H_{n-\smash[t]{1}}^{(r_j)} \right]\left[ -\frac{(1-\xi)^{2}}{\xi} \right]^{n-1}\int_0^1 x^{n-1}(1-x)^{n-1}\D x\\\in{}&-\frac{(1-\xi)^{2}}{2\xi}\Span_{\mathbb Q}\left\{ \int_0^1L_{\bm A}\left(-\frac{(1-\xi)^{2}}{\xi}x(1-x)\right)\frac{\D x}{1+\frac{(1-\xi)^{2}}{\xi}x(1-x)}\left| w(\bm A)=\sum_{j=1}^M r_j\right.\right\}.\end{split}
\end{align}}Like the proof of Theorem \ref{thm:Zk(6)}, we will compute the integrals over multi-dimensional polylogarithms using Brown's method. By direct applications of the GPL recursion \eqref{eq:GPL_rec} \big[with $z =-\frac{(1-\xi)^{2}}{\xi}x(1-x)$\big], we can show inductively that \begin{align}\begin{split}
&L_{\bm A}\left(-\frac{(1-\xi)^{2}}{\xi}x(1-x)\right)\\\in{}&\Span_{\mathbb Z}\left\{ G(\beta_{1},\dots, \beta_{w(\bm A)};x) \left| \begin{smallmatrix}\beta_{1},\dots, \beta_{w(\bm A)}\in \big\{0,1,\frac{1}{1-\xi},\frac\xi{\xi-1}\big\} \\ \beta_{w(\bm A)}\neq0\end{smallmatrix} \right.\right\}.\end{split}
\end{align} Furthermore,  the partial fraction expansion\begin{align}
\frac{-\frac{(1-\xi)^{2}}{2\xi}}{1+\frac{(1-\xi)^{2}}{\xi}x(1-x)}=\frac{1-\xi}{1+\xi}\left(\frac{1}{x-\frac{1}{1-\xi}}-\frac{1}{x-\frac\xi{\xi-1}}\right)
\end{align} tells us that \begin{align}\begin{split}&
-\frac{1+\xi}{1-\xi}\frac{(1-\xi)^{2}}{2\xi}\int_0^1L_{\bm A}\left(-\frac{(1-\xi)^{2}}{\xi}x(1-x)\right)\frac{\D x}{1+\frac{(1-\xi)^{2}}{\xi}x(1-x)}\\\in{}& \mathfrak \Span_{\mathbb Q}\left\{G(\beta_{1},\dots, \beta_{w(\bm A)+1};1)  \left|\begin{smallmatrix}\beta_1,\dots,\beta_{w(\bm A)+1}\in\left\{0,1,\frac{1}{1-\xi},\frac\xi{\xi-1}\right\} \\\beta_{1}\in\left\{\frac{1}{1-\xi},\frac\xi{\xi-1}\right\},\beta_{w(\bm A)+1}\neq0\\\end{smallmatrix}\right.\right\}\\\subseteq{}&\mathfrak g_{w(\bm A)+1}^{(\xi)}(2)\cap  \mathfrak g_{w(\bm A)+1,1}^{\big\{1,\frac{1}{1-\xi},\frac{\xi}{\xi-1}\big\}},\end{split}\label{eq:LA_xi1}
\end{align}where  the ``$\in$'' step comes from the GPL recursion in \eqref{eq:GPL_rec}, and the ``$\subseteq$'' step draws on another round of  fibration with respect to $\xi$ \big[which is deducible from repeated applications of the $z=1 $ and $z=\xi $ cases of the GPL recursion \eqref{eq:GPL_rec} to the GPL differential form \eqref{eq:GPL_diff_form} for  $ \D\log(\alpha_{j-1}-\alpha_{j}),\D\log(\alpha_{j+1}-\alpha_{j})\in\big\{\D\log\frac{1}{1-\xi},\D\log\frac{\xi}{\xi-1},\D\log\frac{1+\xi}{1-\xi},0\big\} $\big].

Then, we work with positive integers  $ s$. By the GPL recursion in \eqref{eq:GPL_rec}, we  have [cf.\ \eqref{eq:LA_promo}]\begin{align}\begin{split}&
\sum_{n=1}^\infty \left[ \prod_{j=1}^M\mathsf H_{n-\smash[t]{1}}^{(r_j)} \right]\frac{z^{n}}{n}\\={}&\sum_{n=1}^\infty \left[ \prod_{j=1}^M\mathsf H_{n-\smash[t]{1}}^{(r_j)} \right]\int_{0}^z x^{n-1}\D x\in \Span_{\mathbb Q}\left\{ \int_0^z\frac{L_{\bm A}(x)}{1-x}\D x\left| w(\bm A)=\sum_{j=1}^M r_j\right.\right\}\\\subseteq{}&\Span_{\mathbb Q}\left\{ L_{\bm A}(z)\left| w(\bm A)=1+\sum_{j=1}^M r_j\right.\right\}.\end{split}
\end{align}Subsequent inductions in the spirit of \eqref{eq:LA_promo1} bring us \begin{align}
\sum_{n=1}^\infty \left[ \prod_{j=1}^M\mathsf H_{n-\smash[t]{1}}^{(r_j)} \right]\frac{z^{n}}{n^s}\in{}&\Span_{\mathbb Q}\left\{ L_{\bm A}(z)\left| w(\bm A)=s+\sum_{j=1}^M r_j\right.\right\}
\end{align}for all $ s\in\mathbb Z_{>0}$. Accordingly, we arrive at
\begin{align}\begin{split}&
\sum_{n=1}^\infty \left[ \prod_{j=1}^M\mathsf H_{n-\smash[t]{1}}^{(r_j)} \right]\frac{1}{n^{s+1}\binom{2n}{n}}\left[ -\frac{(1-\xi)^{2}}{\xi} \right]^{n}\\={}&
\sum_{n=1}^\infty \left[ \prod_{j=1}^M\mathsf H_{n-\smash[t]{1}}^{(r_j)} \right]\frac{1}{n^{s}}\left[ -\frac{(1-\xi)^{2}}{\xi} \right]^{n}\int_0^1 x^{n}(1-x)^{n}\frac{\D x}{x(1-x)}
\\\in{}&\Span_{\mathbb Q}\left\{ \int_0^1L_{\bm A}\left(-\frac{(1-\xi)^{2}}{\xi}x(1-x)\right)\frac{\D x}{x(1-x)}
\left| w(\bm A)=s+\sum_{j=1}^M r_j\right.\right\},
\end{split}
\end{align}
where \begin{align}\begin{split}&
\int_0^1L_{\bm A}\left(-\frac{(1-\xi)^{2}}{\xi}x(1-x)\right)\frac{\D x}{x(1-x)}\\\in{}& \mathfrak \Span_{\mathbb Q}\left\{G(\beta_{1},\dots, \beta_{w(\bm A)+1};1)  \left|\begin{smallmatrix}\beta_1,\dots,\beta_{w(\bm A)+1}\in\left\{0,1,\frac{1}{1-\xi},\frac\xi{\xi-1}\right\} \\\beta_{1}\in\left\{0,1\right\},\beta_{w(\bm A)+1}\neq0\\\end{smallmatrix}\right.\right\}\\\subseteq{}& \mathfrak g_{w(\bm A)+1}^{(\xi)}(2)\cap  \mathfrak g_{w(\bm A)+1,1}^{\big\{1,\frac{1}{1-\xi},\frac{\xi}{\xi-1}\big\}}\end{split}
\label{eq:LA_xi2}\end{align}follows from recursions and fibrations of  GPLs (along with logarithmic regularizations \cite[\S2.3]{Panzer2015}  when $\beta_1=1$).

This completes the proof of \eqref{eq:KWY_inv_binom}. To generalize it to \eqref{eq:KWY_inv_binom_star}, simply note that \begin{align}
\begin{split}&
L_{\bm A}\left(-\frac{(1-\xi)^{2}}{\xi}x(1-x)\right)f_{M'}(x)\\\in{}& \mathfrak \Span_{\mathbb Q}\left\{ \vphantom{\Big\{}G(\beta_{1},\dots, \beta_{w(\bm A)};x)  \left|\beta_1,\dots,\beta_{w(\bm A)}\in\left\{0,1,\frac{1}{1-\xi},\frac\xi{\xi-1}\right\} \right. \right\}\end{split}
\end{align}follows from the condition $ f_{M'}(x)\in\mathfrak H^{(x)}_{M'}(1)$ and the GPL shuffle algebra.

\item Combining \eqref{eq:nonlin_Euler_per_alt} with the observation that \begin{align}
{{\mathsf H}}_{\vphantom{1}n-1}^{(r)}=2^{r-1}\left[{{\mathsf H}}_{{2}n-1}^{(r)}-\smash[t]{\overline{\mathsf H}}_{2n-1}^{(r)}\right],\quad r\in\mathbb Z_{>0},\label{eq:2n-1_H}
\end{align} we may put down \begin{align}\begin{split}&
\sum_{n=1}^\infty \left[\prod_{j=1}^M{{\mathsf H}}_{\smash[t]{}n-1}^{( {r}_{j})}\right]\left\{\prod_{\smash[b]{j'}=1}^{M'}\Big[{{\mathsf H}}_{\smash[t]{2}n-1}^{( {r}'_{j'})}+\smash[t]{\overline{\mathsf H}}_{\smash[t]{2}n-1}^{( {r}'_{j'})}\Big]\right\}z^{2n-1}\\\in{}&\Span_{\mathbb Q}\left\{ \left.\frac{g(z)}{1-z} -\frac{g(-z)}{1+z}\right|g(z)\in\mathfrak G^{(z)}_{\sum_{j=1}^M r_j+\sum_{j'=1}^{M'} r'_{j'}}(2)\right\}.\end{split}\label{eq:Gsymsum}
\end{align}When $s=0$, to evaluate the integral representation for the left-hand side of \eqref{eq:KWY_inv_binom_alt}, we need \begin{align}\begin{split}
{}&\frac{1-\tau^{2}}{1+\tau^{2}}\frac{1+\tau^2}{\tau}\int_0^1\frac{G\left(\alpha_1,\dots, \alpha_w;\pm\frac{1+\tau^2}{\tau}\sqrt{X(1-X)}\right)\D X}{\sqrt{X(1-X)}\left[1\mp\frac{1+\tau^2}{\tau}\sqrt{X(1-X)}\right]}\\={}&\frac{2(1-\tau^2)}{\tau}\int_{1/2}^{1}\frac{G\left(\alpha_1,\dots, \alpha_w;\pm\frac{1+\tau^2}{\tau}\sqrt{x(1-x)}\right)\D x}{\sqrt{x(1-x)}\left[1\mp\frac{1+\tau^2}{\tau}\sqrt{x(1-x)}\right]}\\={}&4\int_0^1G\left(\alpha_1,\dots,\alpha_w;\pm\frac{(1+\tau^2)t}{(1+t^{2})\tau}\right)\left( \frac{1}{\tau\mp t}-\frac{1}{\frac{1}{\tau}\mp t} \right)\D t,\end{split}
\end{align}where $ x=\frac{1}{1+t^2}$, $ \alpha_1,\dots, \alpha_w\in\{-1,0,1\},\alpha_w\neq0$ and $ w=\sum_{j=1}^M r_j+\sum_{j'=1}^{M'} r'_{j'}$. Again,  by  recursive applications of  \eqref{eq:GPL_rec} for $z =\pm\frac{(1+\tau^2)t}{(1+t^{2})\tau}$, we can show that \begin{align}\begin{split}
&G\left(\alpha_1,\dots,\alpha_w;\pm\frac{(1+\tau^2)t}{(1+t^{2})\tau}\right)\\\in{}&\Span_{\mathbb Z}\left\{ G(\beta_{1},\dots, \beta_{w};t) \left| \begin{smallmatrix}\beta_{1},\dots, \beta_{w}\in\left\{ 0,\tau ,i,-\frac{1}{\tau},-\tau,-i,\frac{1}{\tau}\right\} \\ \beta_{w}\neq0\end{smallmatrix} \right.\right\}.\end{split}
\end{align} Therefore, the left-hand side of \eqref{eq:KWY_inv_binom_alt} belongs to the $ \mathbb Q$-vector space{\small\begin{align}
\Span_{\mathbb Q}\left\{G(\beta_{1},\dots, \beta_{k+1};1)\left|\beta_1,\dots, \beta_{k+1}\in\left\{ 0,\tau ,i,-\frac{1}{\tau},-\tau,-i,\frac{1}{\tau}\right\}\right.\right\},\label{eq:horQvec}
\end{align}}when $s=0$.
 Fibrating the GPLs in \eqref{eq:horQvec} with respect to $\tau$, we realize that  \eqref{eq:horQvec}  is a subspace of $ \mathfrak g^{(\tau)}_{k+1}(4)$.

Then we handle  \eqref{eq:KWY_inv_binom_alt} for $ s\in\mathbb Z_{>0}$. By the GPL recursion in \eqref{eq:GPL_rec},  we may promote  \eqref{eq:Gsymsum} to\begin{align}\begin{split}&
\sum_{n=1}^\infty \left[ \prod_{j=1}^M\mathsf H_{n-\smash[t]{1}}^{(r_j)} \right]\left\{\prod_{\smash[b]{j'}=1}^{M'}\Big[{{\mathsf H}}_{\smash[t]{2}n-1}^{( {r}'_{j'})}+\smash[t]{\overline{\mathsf H}}_{\smash[t]{2}n-1}^{( {r}'_{j'})}\Big]\right\}\frac{z^{2n}}{n^{s}}\\\in{}&\mathfrak  \Span_{\mathbb Q}\left\{ g(z)+g(-z)\left|g(z)\in\mathfrak G^{(z)}_{s+\sum_{j=1}^M r_j+\sum_{j'=1}^{M'} r'_{j'}}(2)\right.\right\}\end{split}\label{eq:G_promo}
\end{align} for all $ s\in\mathbb Z_{>0}$. Accordingly, the integral representation of\begin{align}
\sum_{n=1}^\infty \left[ \prod_{j=1}^M\mathsf H_{n-\smash[t]{1}}^{(r_j)} \right]\left\{\prod_{\smash[b]{j'}=1}^{M'}\Big[{{\mathsf H}}_{\smash[t]{2}n-1}^{( {r}'_{j'})}+\smash[t]{\overline{\mathsf H}}_{\smash[t]{2}n-1}^{( {r}'_{j'})}\Big]\right\}\frac{1}{n^{s+1}\binom{2n}{n}}\left( \frac{1+\tau^{2}}{\tau} \right)^{2n}\label{eq:hor0}
\end{align} requires us to evaluate\begin{align}\begin{split}&
\int_0^1\frac{G\left(\widetilde{\alpha}_1,\dots, \widetilde{\alpha}_{w+s};\pm \frac{1+\tau^{2}}{\tau}\sqrt{X(1-X)}\right)}{X(1-X)}\D X\\={}&4\int_0^1G\left(\widetilde{\alpha}_1,\dots, \widetilde{\alpha}_{w+s};\pm\frac{(1+\tau^2)t}{(1+t^{2})\tau}\right)\frac{\D t}{t},\end{split}
\end{align} where $ \widetilde{\alpha}_1,\dots, \widetilde{\alpha}_{w+s}\in \{-1,0,1\},\alpha_w\neq0$ and $ w=\sum_{j=1}^M r_j+\sum_{j'=1}^{M'} r'_{j'}$.  Therefore, the left-hand side of \eqref{eq:KWY_inv_binom_alt}  is still in the $ \mathbb Q$-vector space $ \mathfrak g^{(\tau)}_{k+1}(4)$.

This completes the proof for a weaker version of    \eqref{eq:KWY_inv_binom_alt}. The same arguments extend to the $  \mathfrak g^{(\tau)}_{k+1}(4)$ part of  \eqref{eq:KWY_inv_binom_alt_star_star}, since   $ g_{M''}(x)\in\mathfrak G_{M''}^{(x)}(1)$ entails $ g_{M''}\big(\frac{1}{1+t^2}\big)+ g_{M''}\big(1-\frac{1}{1+t^2}\big)\in\mathfrak g_{M''}[i,-i;t](2)$.

To verify    \eqref{eq:KWY_inv_binom_alt} and    \eqref{eq:KWY_inv_binom_alt_star_star} in their entirety, we need contour deformations. Concretely speaking, if we have $f _k(z)\in\mathfrak G_k^{(z)}(2)$ and    $ g_{M''}(z)\in\mathfrak G_{M''}^{(z)}(1)$  for $ |z|<1$, then we can deform the contour of\begin{align}\begin{split}&
\int_0^\infty f_{k}\left(\pm\frac{(1+\tau^2)t}{(1+t^{2})\tau}\right)g_{M''}\left(\frac{1}{1+t^2}\right)h(t)\D t\\={}&\left( \int_{-i0^+}^{\infty-i0^+}-  \int_{i0^+}^{\infty+ i0^+}\right)f_{k}\left(\pm\frac{(1+\tau^2)t}{(1+t^{2})\tau}\right)g_{M''}\left(\frac{1}{1+t^2}\right)h(t)\frac{\log(-t)\D t}{2\pi i},\\{}&\text{where }h(t)\in\left\{ \frac{1}{\tau\mp t}-\frac{1}{\frac{1}{\tau}\mp t},\frac{1}{t} \right\}\end{split}
\end{align} to tight loops enveloping the branch cuts, so that  the net contribution to the integral results from the following jump discontinuities  \cite{Maitre2012,Panzer2015}\begin{align}
\left\{\begin{array}{@{}l}f_{k}(\pm (\varXi+i0^+))-f_{k}(\pm (\varXi-i0^+))\in\pi i\mathfrak g^{(\pm1/\varXi)}_{k-1}(2),\\[5pt]g_{M''}(\varXi+i0^+)-g_{M''} (\varXi-i0^+)\in\pi i\mathfrak g^{(1/\varXi)}_{M''-1}(1),\end{array} \right.
\end{align}for $\varXi>1 $ (with the understanding that $ \mathfrak g_{-1}^{(z)}(1)=\{0\}$). Here, in the complex $t$-plane, the end points of these branch cuts belong to the set $ \left\{ i,-i,\tau ,-\tfrac{1}{\tau},-\tau,\tfrac{1}{\tau}\right\}$. Recursive applications of   \eqref{eq:GPL_rec} with $z=i$ and $z=t$ lead us to \begin{align}
\mathfrak g^{(\pm1/\varXi)}_{k-1}(2)\subseteq\mathfrak g_{k-1}^{(t)}(4)\quad\text{and}\quad \mathfrak g^{(1+t^{2})}_{M''-1}(1)\subseteq\mathfrak g_{M''-1}^{(t)}(4)
\end{align} for $ \varXi=\frac{(1+\tau^2)t}{(1+t^{2})\tau}$, hence the full characterization stated in \eqref{eq:KWY_inv_binom_alt} and    \eqref{eq:KWY_inv_binom_alt_star_star}.
\qedhere\end{enumerate}\end{proof}
\begin{corollary}[Inverse binomial sums via Goncharov--Deligne periods]Let $ \bm r=( r_1,\dots,r_M)$ and  $\bm r'=( {r}'_1,\dots,{r}'_{{M'}})$ be vectors whose components are positive integers. Define their $1$-norms by $ \Vert \bm r\Vert_1\colonequals\sum_{j=1}^Mr_j$ and  $ \Vert \bm r'\Vert_1\colonequals\sum_{j=1}^{M'}r'_j$. We will examine the following inverse binomial sums \begin{align}
\mathsf s_{\bm r}(u,s)\colonequals {}&\sum_{n=1}^\infty \left[ \prod_{j=1}^M\mathsf H_{n-\smash[t]{1}}^{(r_j)} \right]\frac{u^{n}}{n^{s+1}\binom{2n}{n}},\\\mathsf S_{\bm r;\bm r'}(U,s)\colonequals{}&\sum_{n=1}^\infty \left[\prod_{j=1}^M\mathsf H_{n-\smash[t]{1}}^{(r_j)}\right]\left\{\prod_{\smash[b]{j'}=1}^{M'}\Big[{{\mathsf H}}_{\smash[t]{2}n-1}^{( {r}'_{j'})}+\smash[t]{\overline{\mathsf H}}_{\smash[t]{2}n-1}^{( {r}'_{j'})}\Big]\right\}{\frac{U^{n}}{n^{s+1}\binom{2n}{n}}},
\end{align} for certain special values of $ u=-\frac{(1-\xi)^{2}}{\xi}$ and $ U=\left( \frac{1+\tau^{2}}{\tau} \right)^{2}$.\begin{enumerate}[leftmargin=*,  label=\emph{(\alph*)},ref=(\alph*),
widest=a, align=left]
\item
For  $s\in\mathbb Z_{\geq0}$, $ k=s+\Vert \bm r\Vert_1$, we have the following particular cases of \eqref{eq:KWY_inv_binom},  where $ \xi\in\big\{\varrho\colonequals e^{\pi i/3},1-\rho\colonequals\frac{3-\sqrt{5}}{2}=\rho^2,i,\omega\colonequals e^{2\pi i/3}\big\}$:\footnote{In this work, the  Greek letters in the notations for roots of unity  $ \omega\colonequals e^{2\pi i/3}$, $ \varsigma\colonequals e^{2\pi i/5}$, $\varrho\colonequals e^{2\pi i/6}$, and $ \theta\colonequals e^{2\pi i/8}$ are  chosen for their visual  resemblance
to the corresponding Arabic numerals. The  notation $\rho\colonequals\frac{\sqrt{5}-1}{2}$ is standard in modern literature on polylogarithms \cite{Lewin1991coll,BorweinBroadhurstKamnitzer2001}. } {\allowdisplaybreaks\begin{align}
\big(i\sqrt{3} \big)^{\delta_{0,s}}\mathsf s_{\bm r}(1,s)\in{}& \mathfrak g^{(\varrho)}_{k+1}(2)\cap\mathfrak Z_{k+1}^{}(3),\label{eq:KWY0}
\\
\big(\sqrt{5} \big)^{\delta_{0,s}}\mathsf s_{\bm r}(-1,s)\in{}&  \mathfrak g^{(\rho^2)}_{k+1}(2)\cap\mathfrak Z_{k+1}^{}(10), \label{eq:genApery}\\
i^{\delta_{0,s}}\mathsf s_{\bm r}(2,s)\in {}&\mathfrak g^{(i)}_{k+1}(2)\cap\mathfrak Z_{k+1}^{}(4),\label{eq:KMY_i}\\
\big(i\sqrt{3} \big)^{\delta_{0,s}}\mathsf s_{\bm r}(3,s)\in {}&\mathfrak g^{(\omega)}_{k+1}(2)\cap\mathfrak Z_{k+1}^{}(6).\label{eq:KMY_omega}
\end{align}}For   $s\in\mathbb Z_{>0}$, $ k=s+\Vert \bm r\Vert_1$, we have  \begin{align}
\mathsf s_{\bm r}(4,s)=\sum_{n=1}^\infty \left[ \prod_{j=1}^M\mathsf H_{n-\smash[t]{1}}^{(r_j)} \right]\frac{2^{2n}}{n^{s+1}\binom{2n}{n}}\in{}&\mathfrak  Z_{k+1}^{}(2),\label{eq:KWY_-1}
\intertext{ along with the following quadratic analog of inverse binomial sums:}
\sum_{n=1}^\infty \left[ \prod_{j=1}^M\mathsf H_{n-\smash[t]{1}}^{(r_j)} \right]\frac{2^{4n}}{n^{s+2}\binom{2n}{n}^2}\in {}&\mathfrak Z_{k+2}(4).\label{eq:KWY_quad}
\end{align}
 \item

For  $s\in\mathbb Z_{\geq0}$, $ k=s+\Vert \bm r\Vert_1+\Vert \bm r'\Vert_1$, we have the following particular cases of \eqref{eq:KWY_inv_binom_alt},  where $ \tau\in\big\{\varrho\colonequals e^{\pi i/3},i\rho\colonequals i\frac{\sqrt{5}-1}{2},\theta\colonequals e^{\pi i/4},i/\varrho=e^{ \pi i/6}\big\}$: {\allowdisplaybreaks\begin{align}
\big(i\sqrt{3} \big)^{\delta_{0,s}}\mathsf S_{\bm r;\bm r'}(1,s)\in {}&\mathfrak g^{(\varrho)}_{k+1}(4)\cap\mathfrak Z_{k+1}^{}(12),\label{eq:KWY1}\\
\big(\sqrt{5} \big)^{\delta_{0,s}}\mathsf S_{\bm r;\bm r'}(-1,s)\in {}& \mathfrak g_{k+1}^{(i\rho)}(4)\cap\mathfrak Z_{k+1}(60),\label{eq:Zk60}
\\
i^{\delta_{0,s}}\mathsf S_{\bm r;\bm r'}(2,s)\in{}& \mathfrak g^{(\theta)}_{k+1}(4)\cap\mathfrak Z_{k+1}^{}(8),\\\big(i\sqrt{3} \big)^{\delta_{0,s}}\mathsf S_{\bm r;\bm r'}(3,s)\in{}& \mathfrak g^{(i/\varrho)}_{k+1}(4)\cap\mathfrak Z_{k+1}(6).\label{eq:Zk6i}\end{align} }For  $s\in\mathbb Z_{>0}$, $ k=s+\Vert \bm r\Vert_1+\Vert \bm r'\Vert_1$, we have \begin{align}
\mathsf S_{\bm r;\bm r'}(4,s)=\sum_{n=1}^\infty \left[ \prod_{j=1}^M\mathsf H_{n-\smash[t]{1}}^{(r_j)} \right]\left\{\prod_{\smash[b]{j'}=1}^{M'}\Big[{{\mathsf H}}_{\smash[t]{2}n-1}^{( {r}'_{j'})}+\smash[t]{\overline{\mathsf H}}_{\smash[t]{2}n-1}^{( {r}'_{j'})}\Big]\right\}\frac{2^{2n}}{n^{s+1}\binom{2n}{n}}\in{}&\mathfrak Z_{k+1}^{}(4),\label{eq:KWY_-1_alt}
\intertext{as well as the following quadratic analog of inverse binomial sums:}
\sum_{n=1}^\infty \left[ \prod_{j=1}^M\mathsf H_{n-\smash[t]{1}}^{(r_j)} \right]\left\{\prod_{\smash[b]{j'}=1}^{M'}\Big[{{\mathsf H}}_{\smash[t]{2}n-1}^{( {r}'_{j'})}+\smash[t]{\overline{\mathsf H}}_{\smash[t]{2}n-1}^{( {r}'_{j'})}\Big]\right\}\frac{2^{4n}}{n^{s+2}\binom{2n}{n}^2}\in{}& \mathfrak Z_{k+2}(4).\label{eq:KWY_quad'}
\end{align}\end{enumerate}

\end{corollary}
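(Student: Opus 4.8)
The plan is to read every displayed membership off the general inverse-binomial-sum results \eqref{eq:KWY_inv_binom} and \eqref{eq:KWY_inv_binom_alt} of Corollary \ref{cor:inv_binom_sum} by a single substitution, and then to collapse the ambient GPL space to a cyclotomic one. First I would match parameters. Solving $u=-\frac{(1-\xi)^2}{\xi}$ amounts to $\xi^2+(u-2)\xi+1=0$, so each prescribed value $u\in\{1,-1,2,3,4\}$ selects a reciprocal pair of algebraic numbers, of which one representative is $\xi\in\{\varrho,\rho^2,i,\omega,-1\}$; likewise $U=\big(\tfrac{1+\tau^2}{\tau}\big)^2$ forces $\tau\in\{\varrho,i\rho,\theta,i/\varrho\}$ for $U\in\{1,-1,2,3\}$. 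Since $\mathsf s_{\bm r}(u,s)$ and $\mathsf S_{\bm r;\bm r'}(U,s)$ are the sums of Corollary \ref{cor:inv_binom_sum} stripped of their $\big(\tfrac{1+\xi}{1-\xi}\big)^{\delta_{0,s}}$, respectively $\big(\tfrac{1+\tau^2}{1-\tau^2}\big)^{\delta_{0,s}}$, prefactors, I would evaluate $\tfrac{1+\xi}{1-\xi}$ (resp.\ $\tfrac{1+\tau^2}{1-\tau^2}$) at each point: these equal $i\sqrt3,\sqrt5,i,i\sqrt3$ up to a nonzero rational, which is exactly the normalization appearing in \eqref{eq:KWY0}--\eqref{eq:KMY_omega} and \eqref{eq:KWY1}--\eqref{eq:Zk6i}, the rational ambiguity being harmless because the targets are $\mathbb Q$-vector spaces. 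The value $u=U=4$ is the boundary case (double root $\xi=-1$, resp.\ $\tau=1$), where this prefactor vanishes and the series converges only for $s>0$; this accounts for the hypothesis $s\in\mathbb Z_{>0}$ in \eqref{eq:KWY_-1} and \eqref{eq:KWY_-1_alt}.

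With each sum placed in $\mathfrak g^{(\xi)}_{k+1}(2)$ or $\mathfrak g^{(\tau)}_{k+1}(4)$, the next step is the reduction to $\mathfrak Z_{k+1}(N)$. When the chosen point is a root of unity, I would convert each generating GPL $G(\bm\beta;\xi)$ (entries of $\bm\beta$ in $\{0,\pm1\}$) into a multiple polylogarithm via \eqref{eq:GPL_MPL}: its arguments are the consecutive ratios $\tfrac{\xi}{\widetilde\beta_1},\tfrac{\widetilde\beta_1}{\widetilde\beta_2},\dots$, all lying in the group generated by $\xi$ and $-1$, i.e.\ the $N$-th roots of unity with $N=\operatorname{lcm}(\operatorname{ord}\xi,2)$ (resp.\ $\operatorname{lcm}(\operatorname{ord}\tau,4)$). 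Together with $\pi i\in\mathfrak Z_1(N)$ [cf.\ \eqref{eq:piIZ1}] and the filtration \eqref{eq:Zk(N)_filt}, this yields level $4$ for $\xi=i$, level $6$ for $\xi=\omega$, level $2$ for $\xi=-1$, and, with base level $4$, levels $12$ and $8$ for $\tau\in\{\varrho,\theta\}$. The two sharper claims---level $3$ in \eqref{eq:KWY0} and level $6$ in \eqref{eq:Zk6i}---do not follow from this lcm bookkeeping and instead exploit the second description $\mathfrak g_{k+1,1}^{\{1,1/(1-\xi),\xi/(\xi-1)\}}$: at $\xi=\varrho$ one checks $\tfrac1{1-\varrho}=\varrho$ and $\tfrac\varrho{\varrho-1}=\tfrac1\varrho$, so all GPL letters lie in $\{0,1,\varrho,\tfrac1\varrho\}$ and the collapse from level $6$ to level $3$ is precisely the content of \eqref{eq:Zk3Au}; the analogous level-$12\to6$ reduction for $\tau=i/\varrho$ I would secure either in the same fashion or through Au's provable reductions \cite{Au2022a}.

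The hard part will be the pair of golden-ratio levels---$10$ in \eqref{eq:genApery} and $60$ in \eqref{eq:Zk60}---together with the quadratic sums \eqref{eq:KWY_quad} and \eqref{eq:KWY_quad'}. For $\xi=\rho^2$ and $\tau=i\rho$ the base point is \emph{not} a root of unity, so the ratio argument above breaks down and one must instead invoke the reduction of polylogarithms at the golden-ratio points $\{\rho,\rho^2,1/\rho,-\rho\}$ to cyclotomic values at fifth roots of unity; this gives $N=\operatorname{lcm}(5,2)=10$ in part (a), while in part (b) the fifth-root data combine with the level-four letters and the further roots of unity introduced during the fibration in the proof of \eqref{eq:KWY_inv_binom_alt} to produce level $60$. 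Verifying that this golden-ratio reduction respects the weight filtration and delivers provable---rather than merely numerical---$\mathbb Q$-linear relations is the technical crux, and is where I would lean most heavily on Au's \texttt{MultipleZetaValues} machinery \cite{Au2022a}. Finally, for the quadratic analogs I would replace the single beta integral $\tfrac1{\binom{2n}{n}}=(2n+1)\int_0^1[x(1-x)]^n\,\D x$ by a two-fold integral representation of $\binom{2n}{n}^{-2}$; this adds one unit of weight---hence $\mathfrak Z_{k+2}(4)$ rather than $\mathfrak Z_{k+1}$---and, after fibering the resulting double GPL integral in the two auxiliary variables, lands in level $4$ by the same root-of-unity bookkeeping as above.
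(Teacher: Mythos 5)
Your treatment of the routine cases is sound and coincides with the paper's: the parameter matching via $\xi^{2}+(u-2)\xi+1=0$, the absorption of the prefactors $\tfrac{1+\xi}{1-\xi}$ (resp.\ $\tfrac{1-\tau^{2}}{1+\tau^{2}}$) up to harmless rationals, the lcm bookkeeping for $\xi\in\{i,\omega,-1\}$ and $\tau\in\{\varrho,\theta\}$, and the collapse to level $3$ at $\xi=\varrho$ via \eqref{eq:Zk3Au} are exactly the paper's steps (for \eqref{eq:Zk6i} the paper's actual device is the GPL scaling property, which maps the letter set $\big\{i,-i,\tau,-\tfrac1\tau,-\tau,\tfrac1\tau\big\}\big|_{\tau=i/\varrho}$ onto sixth roots of unity; your ``same fashion'' hedge happens to be repairable that way). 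But at the two places you yourself flag as the crux, the plan has genuine holes. For \eqref{eq:genApery} and \eqref{eq:Zk60} you invoke ``the reduction of polylogarithms at golden-ratio points to fifth roots of unity'' and propose to lean on Au's \texttt{MultipleZetaValues} package for provability. That package furnishes provable reductions only for $N\in\{6,8,10,12\}$; there is no automated machinery at level $60$, so it cannot close \eqref{eq:Zk60}. The paper instead \emph{constructs} the reduction: for level $10$ it exhibits the fibration \eqref{eq:pentagon_fib}, whose rational letters $-\tfrac{(1+\varsigma)\varsigma(\varsigma+z)}{\varsigma^{4}+z}$ and $-\tfrac{(1+\varsigma)\varsigma(\varsigma^{3}+z)}{\varsigma^{6}+z}$ specialize at $z=1$ to $\tfrac{1\pm\sqrt5}{2}$; for level $60$ it realizes $\tau=i\rho$ as the specialization $z=e^{\pi i/5}$ of $R_n(z)=i\big(ze^{n\pi i/5}+\tfrac{1}{ze^{n\pi i/5}}\big)$ in \eqref{eq:pentagon_prep} and fibrates in $z$. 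Some such explicit parametrization is indispensable; without one, your argument at these two memberships is an assertion rather than a proof.

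The quadratic analogs \eqref{eq:KWY_quad} and \eqref{eq:KWY_quad'} are also out of reach by your two-fold beta integral \emph{as stated}. Writing $\tfrac{1}{n^{2}\binom{2n}{n}^{2}}=\tfrac14\int_0^1\int_0^1[xy(1-x)(1-y)]^{n-1}\D x\,\D y$ and fibrating first in $x$ produces GPL letters $\tfrac{1}{1-\eta},\tfrac{\eta}{\eta-1}$ with $-\tfrac{(1-\eta)^{2}}{\eta}=16y(1-y)$, so $\eta$ carries $\sqrt{v(v-4)}$ for $v=16y(1-y)$: the second fibration then faces square-root letters outside the GPL class unless you rationalize, and nothing in your sketch does so. The paper sidesteps this with a one-line contour device: $\int_1^i\big(\tfrac{1-z^{2}}{iz}\big)^{2n-1}\tfrac{\D z}{iz}=-\tfrac{2^{4n-2}}{n\binom{2n}{n}}$, i.e.\ the second inverse-binomial factor is manufactured by integrating the already-proved \eqref{eq:KWY_inv_binom} with $\xi=z^{2}$ along the segment from $1$ to $i$; the inclusion $\mathfrak g^{(z^{2})}_{k+1}(2)\subseteq\mathfrak g^{(z)}_{k+1}(4)$ accounts for the level $4$, and the final $z$-integration for the extra unit of weight, yielding $\mathfrak Z_{k+2}(4)$. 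The analogue $\int_1^i\big(\tfrac{1+\tau^{2}}{\tau}\big)^{2n}\tfrac{i\D\tau}{1+\tau^{2}}=-\tfrac{2^{4n-2}}{n\binom{2n}{n}}$, with Panzer's logarithmic regularization at $\tau=i$, gives \eqref{eq:KWY_quad'}. Your route could in fact be salvaged by the rationalization $y=\tfrac{1}{1+t^{2}}$ (then $\sqrt{v-4}=\tfrac{2i(1-t^{2})}{1+t^{2}}$, so $\eta$ becomes rational in $t$ and $i$), but that substitution---precisely where level $4$ and weight $k+2$ come from---is the missing ingredient in your proposal.
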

\begin{proof} \begin{enumerate}[leftmargin=*,  label={(\alph*)},ref=(\alph*),
widest=a, align=left]
\item
\begin{table}[t]\caption{Selected closed forms  for inverse binomial sums, where $ \omega\colonequals e^{2\pi i/3}$, $ \varrho\colonequals e^{\pi i/3}$, $\rho\colonequals\frac{\sqrt{5}-1}{2}$,  $ \varsigma\colonequals e^{2\pi i/5}$,  $ G\colonequals\I\Li_2(i)$,  $ \lambda\colonequals\log2$, and $ \varLambda\colonequals\log3$\label{tab:inv_binom}}\vspace{-1em}\begin{scriptsize}\begin{align*}\begin{array}{c|l}\hline\hline (r,m,\xi)&{  \displaystyle\sum_{n=1}^\infty\left[\mathsf H_{n-1}^{(r)}\right]^m\frac{1}{n\binom{2n}{n}}\left[ -\frac{(1-\xi)^{2}}{\xi} \right]^{n-1}\vphantom{\frac{\frac1\int}{\frac\int1}}=\frac{1}{2}\int_0^1\mathsf G^{(r),m}\left(-\frac{(1-\xi)^{2}}{\xi}x(1-x)\right)\D x\vphantom{\frac{\int}{\int}}}\\\hline(1,1,\varrho)&\frac{2\I\Li_{2}(\omega)}{\sqrt{3}}-\frac{\pi\varLambda}{3\sqrt{3}}\vphantom{\dfrac{\int}{1}}\\[5pt](1,2,\varrho)&-\frac{2^{3}\I L_{2,1}(\omega)}{\sqrt{3}}-\frac{2^{2}\varLambda\I \Li_{2}(\omega)}{\sqrt{3}}+\frac{\pi\varLambda^{2}}{3\sqrt{3}}+\frac{\pi^{3}}{2\cdot3^3\sqrt{3}}\\[5pt](1,3,\varrho)&-\frac{3^{2}\cdot13\I\Li_{4}(\omega)}{2^2 \sqrt{3}}+\frac{2^{4}\cdot3\I L_{2,1,1}(\omega)}{\sqrt{3}}+\frac{7^{2}\pi\zeta_{3}}{3^{2}\sqrt{3}}+\frac{2^3\cdot3\varLambda\I L_{2,1}(\omega)}{\sqrt{3}}\\&{}+\left( \frac{5\pi^{2}}{2\cdot3} +2\cdot3\varLambda^{2}\right)\frac{\I\Li_{2}(\omega)}{\sqrt{3}}-\left( \frac{\pi^{2}}{2\cdot3}+\varLambda^{2} \right)\frac{\pi\varLambda}{3\sqrt{3}}\\[5pt](1,4,\varrho)&-\frac{2^2\cdot3^2\cdot79\I L_{4,1}(\omega)}{13\sqrt{3}}-\frac{2\cdot3^{2}\cdot109\I L_{3,2}(\omega)}{13\sqrt{3}}-\frac{2^7\cdot3\I L_{2,1,1,1}(\omega)}{\sqrt{3}}+\frac{3^{2}\cdot13\varLambda\I \Li_4(\omega)}{\sqrt{3}}\\&{}+\frac{2^5\pi\R L_{3,1}(\omega)}{\sqrt{3}}-\frac{2^6\cdot3\varLambda\I L_{2,1,1}(\omega)}{\sqrt{3}}+\frac{2^3\zeta_3\I \Li_2(\omega)}{\sqrt{3}}-\frac{2^2\cdot7^2\pi\zeta_3\varLambda}{3^2\sqrt{3}}\\&{}-\frac{2^2(5\pi^2+2^2\cdot3^2\varLambda^2)\I L_{2,1}(\omega)}{3\sqrt{3}}-\frac{3\pi[\I \Li_2(\omega)]^2}{2\sqrt{3}}-\left( \frac{5\pi^2}{3} +2^{2}\varLambda^2\right)\frac{2\varLambda\I\Li_2(\omega)}{\sqrt{3}} \\&{}+\left(\frac{\pi^{2}}{3}+\varLambda^2 \right)\frac{\pi\varLambda^2}{3\sqrt{3}}-\frac{2267 \pi ^5}{2^2 \cdot3^4 \cdot5\cdot13\sqrt{3}}\\[5pt](2,1,\varrho)&\frac{\pi^3}{2\cdot3^4\sqrt{3}}\\[5pt](2,2,\varrho)&\frac{2^2\cdot3^3\I L_{4,1}(\omega)}{13\sqrt{3}}+\frac{2^2\cdot3^2\I L_{3,2}(\omega)}{13\sqrt{3}}+\frac{2^{2}\zeta_3\I \Li_2(\omega)}{\sqrt{3}}-\frac{3\pi[\I \Li_2(\omega)]^2}{2\sqrt{3}}-\frac{5^{2}\pi^5}{2^2\cdot3^6\cdot13\sqrt{3}}\\[5pt]\hline (1,1,\rho^2)&\frac{2^2}{\sqrt{5}}\R\Li_{1,1}\left(-\frac{1}{\varsigma^2}=e^{\pi i/5},\varsigma\right)+\frac{11 \pi ^2}{2\cdot5^{2} \sqrt{5}}\vphantom{\frac{\frac1\int}{1\frac\int1}}\\[5pt](1,2,\rho^2)&-\frac{2^{3}}{\sqrt{5}}\left[ 2^{2}\R\Li_{1,1,1}\left( \varsigma,1,-\frac{1}{\varsigma^2} \right)+\R\Li_{1,1,1}\left( -\varsigma,1,-\frac{1}{\varsigma^2} \right)\right]-\frac{5\cdot11}{3\sqrt{5}}\R\Li_3(\varsigma)-\frac{2^{6}\zeta_3}{5\sqrt{5}}\\&{}+\frac{2^{2}\pi\left[ \I\Li_2(\varsigma)+2\I\Li_2(\varsigma^2) \right]}{\sqrt{5}}+\frac{2^{4}}{\sqrt{5}}[\R\Li_1(\varsigma)]^{2}\R[\Li_1(\varsigma)-\Li_1(\varsigma^2)]\\&{}-\frac{2 \pi ^2}{3\cdot 5^2\sqrt{5}}\R[37\Li_1(\varsigma)-11\Li_1(\varsigma^2)]\\[5pt](2,1,\rho^2)&\frac{2^{2}}{3\sqrt{5}}\left\{ \R[\Li_1(\varsigma^2)-\Li_1(\varsigma)] \right\}^3\\[5pt]\hline (1,1,i)&G-\frac{\pi\lambda}{2^{2}}\vphantom{\frac{\int}1}\\[5pt](1,2,i)&-2^{2}\I L_{2,1}(i)-(2^{3}G-\pi\lambda)\frac{\lambda}{2^{2}}+\frac{\pi^3}{2^{5}}\\[5pt](1,3,i)&-3\cdot5\I \Li_{4}(i)+2^3\cdot3\I L_{2,1,1}(i)+\frac{7\cdot11\pi\zeta_3}{2^5}+2^2\cdot3\lambda\I L_{2,1}(i)+G(\pi^2+3\lambda^2)\\&{}-\left(\frac{3\pi^{2}}{2^{3}}+\lambda^{2} \right)\frac{\pi\lambda}{2^2}\\[5pt](1,4,i)&-\frac{2\cdot3^3\cdot17\I L_{4,1}(i)}{7} -\frac{2\cdot293\I L_{3,2}(i)}{7}-2^6\cdot3\I L_{2,1,1,1}(i)+2^2\cdot3\cdot5\lambda\I \Li_{4}(i)\\&{}+\frac{3\cdot5\pi\zeta_{-3,1}}{2}-2^5\cdot3\lambda\I L_{2,1,1}(i)-\left( \frac{5\cdot7\pi^{2}}{2^{2}}+2^{3}\cdot3\lambda^{2}\right)\I L_{2,1}(i)\\&{}+\frac{7\cdot11\zeta_3}{2^{4}}(3G-2\pi \lambda)-\frac{5\pi G^2}{2}-2^2 G\lambda(\pi^2+\lambda^2)+\frac{\pi\lambda^2}{2^2}\left( \frac{3\pi^{2}}{2^{2}}+\lambda^{2} \right)-\frac{13^{2}\pi^5}{2^{7}\cdot3\cdot7}\\[5pt](2,1,i)&\frac{\pi ^3}{2^5\cdot 3}\\[5pt](2,2,i)&2\cdot3 \I L_{4,1}(i)+2\I L_{3,2}(i)+\frac{5\cdot7\zeta_{3}G}{2^4}+\frac{\pi^2\I L_{2,1}(i)}{2^2}-\frac{\pi G^2}{2}-\frac{\pi^5}{2^7\cdot3^2}\\[5pt]\hline\hline\end{array}\end{align*}\end{scriptsize}\end{table}

With $ \xi=\varrho$ in  \eqref{eq:KWY_inv_binom}, we obtain $\big\{1,\tfrac{1}{1-\xi},\tfrac\xi{\xi-1}\big\} =\big\{ 1,\varrho,\tfrac{1}{\varrho} \big\} $. This hearkens back to  the scenario in  \eqref{eq:Zk3Au}, so  the left-hand side of  \eqref{eq:KWY0} is in $ \mathfrak Z_{k+1}^{}(3)$.

Setting $\xi=1-\rho=\frac{3-\sqrt{5}}{2}$ in  \eqref{eq:KWY_inv_binom}, we get $  \big\{1,\tfrac{1}{1-\xi},\tfrac\xi{\xi-1}\big\} =\big\{1,\tfrac{1+\sqrt{5}}{2},\tfrac{1-\sqrt{5}}{2}\big\}$. By a variation on the  arguments for \eqref{eq:Zk3Au}, we may use  GPL fibration in the variable  $z $  to prove\footnote{The rational functions in \eqref{eq:pentagon_fib} paraphrase \cite[Example 4.7]{Au2022a}. We also note that  the $M=1$ case of  \eqref{eq:genApery} has already been covered by \cite[Theorem  7.2]{Au2022a}. For the fibration here, the GPL recursion integrals  run from $-\varsigma$ (or $ -\varsigma^4$) to $z$, and the GPL differential forms involve terms like $ \D \log\big(1+\frac{(1+\varsigma )\varsigma (\varsigma+z)}{\varsigma^{4}+z}\big)=\D\log\frac{\varsigma^{2}+z}{\varsigma^{4}+z} $ and  $ \D \log\big(1+\frac{(1+\varsigma )\varsigma (\varsigma^{3}+z)}{\varsigma^{6}+z}\big)=\D\log\frac{\varsigma^{4}+z}{\varsigma^{6}+z} $.
} \begin{align}
\mathfrak g_{k+1,1}^{\big\{1,-\frac{(1+\varsigma )\varsigma (\varsigma+z)}{\varsigma^{4}+z},-\frac{(1+\varsigma )\varsigma (\varsigma^{3}+z)}{\varsigma^{6}+z}\big\}}\subseteq\mathfrak g_{k+1}\left[1,-\varsigma,-\varsigma^{2},-\varsigma^{3},-\varsigma^{4};z\right](10),\label{eq:pentagon_fib}
\end{align}  for  $\varsigma=e^{2\pi i/5}$, while checking that \begin{align}
-\frac{(1+\varsigma )\varsigma (\varsigma+z)}{\varsigma^{4}+z}=\frac{1+\sqrt{5}}{2},\quad -\frac{(1+\varsigma )\varsigma (\varsigma^{3}+z)}{\varsigma^{6}+z}=\frac{1-\sqrt{5}}{2}
\end{align}for $z=1$.  This identifies the left-hand side of \eqref{eq:genApery} as a CMZV of level 10  (cf.\ \cite[\S6]{BorweinBroadhurstKamnitzer2001}).

Specializing  \eqref{eq:KWY_inv_binom} to $ \xi\in\{i,\omega,-1\}$, while noting that   $ \pi i\in\mathfrak Z_{1}(N),N\in\mathbb Z_{\geq3}$ [cf.\ \eqref{eq:piIZ1}] and $ \mathfrak g^{(-1)}_{k+1}(2)\cap\mathbb R=\mathfrak Z_{k+1}(2)$, one can verify \eqref{eq:KMY_i}--\eqref{eq:KWY_-1}. Here, the relation \eqref{eq:KMY_i} [resp.\ \eqref{eq:KMY_omega}] is  consistent with \cite[Theorem 4.1]{Au2020}  \big(resp.\ \cite[Proposition 7.11]{Au2022a}   involving $ a_{n,1}=\frac{2}{n\binom{2n}{n}}\big[\mathsf H^{(1)}_{n-1}-\mathsf H^{(1)}_{2n-1}\big]$\big) for the $M=1 $ case.
The Euler--Ap\'ery-type series in \eqref{eq:KWY_-1} (cf.\ \cite[(1.1)]{WangXu2021}) were previously known to be AMZVs (namely CMZVs of level 2).

Combining the following  integral along a straight line contour (on which $ |1-z^2|\leq2|z|$ holds) \begin{align}
\int_1^i\left( \frac{1-z^2}{iz} \right)^{2n-1}\frac{\D z}{iz}=\int_{1}^i\left[ -\frac{(1-z^2)^2}{z^2} \right]^{n}\frac{\D z}{1-z^2}=-\frac{2^{4n-2}}{n\binom{2n}{n}}
\end{align}with the relation $ \mathfrak g_{k+1}^{(z^2)}(2)\subseteq\mathfrak g_{k+1}^{(z)}(4)$ (provable by  variable substitutions in the integrals  defining GPLs), we can confirm \eqref{eq:KWY_quad} by integrating   \eqref{eq:KWY_inv_binom}.

It is clear from the analysis so far that \eqref{eq:KWY_inv_binom} and \eqref{eq:KWY0}--\eqref{eq:KWY_quad} extend retroactively to the cases where $M=0$ (thus covering Zucker's sums \cite{Zucker1985} \begin{align}
\sum_{n=1}^\infty\frac{1}{n^{s+1}\binom{2n}n}
\end{align} and the like), and that the designated $ \mathbb Q$-vector spaces are not affected if we replace any single factor of   $ \mathsf H_{\vphantom{1}n-1}^{(r)}=\mathsf H_{\vphantom{1}n}^{(r)}-\frac{1}{n^r}$ by  $\mathsf H_{\vphantom{1}n}^{(r)}$.   \begin{table}[th!p]\caption{Selected closed forms for quadratic analogs of inverse binomial sums, where  $ G\colonequals\I\Li_2(i)$ and   $ \lambda\colonequals\log2$\label{tab:quad_inv_binom}}\begin{scriptsize}\begin{align*}\begin{array}{c|l@{}}\hline\hline (r,m,\ell)&{  \displaystyle\sum_{n=1}^\infty\left[\mathsf H_{n}^{(r)}\right]^m\frac{2^{4n}}{n^{\ell+2}\binom{2n}{n}^2}\vphantom{\frac{\frac1\int}{\frac\int\int}}=-2\int_1^i\left[\int_0^1\mathsf G^{(r),m}_\ell\left(-\frac{(1-z^{2})^{2}}{z^{2}}x(1-x)\right)\frac{\D x}{x(1-x)}\right]\frac{\D z}{1-z^2}\vphantom{\frac{\int}{\int}}}\\\hline(1,1,1)&2^4\cdot7\zeta_{-3,1}-2^{9}\R L_{2,1,1}(i)+2^5G(G+\pi\lambda)-2^{3}\cdot7\zeta_3\lambda-\frac{13\pi^4}{3^2\cdot5}\vphantom{\dfrac{\int}1}\\[5pt](1,1,2)&\frac{5\cdot211\zeta_5}{2^2}+2^5\cdot3^2\zeta_{-3,1,1}+2^9\R L_{2,2,1}(i)-2^5\cdot3^2\cdot5\pi\I \Li_{4}(i)+2^7\zeta_{-3,1}\lambda\\&{}+2^9\cdot3\pi \I L_{2,1,1}(i)+\zeta_3(11\cdot23\pi^2-2^4\cdot7\lambda^2)+2^9(G+\pi \lambda)\I L_{2,1}(i)+2^7G^2\lambda\\&{}+2^4\pi G(3\pi^2+2^2\lambda^2)+\frac{2\cdot7\pi^4\lambda}{3^2\cdot5}\\[5pt](1,1,3)&2^4\cdot3^3\cdot17\zeta_{-5,1}+2^{10}\cdot3\R L_{4,2}(i)+2^7\cdot3^{2} \zeta_{-3,1,1,1}+2^{12}\R L_{2,2,1,1}(i)+5\cdot211\zeta_5\lambda\\&{}+\frac{2^9\cdot3\cdot83\pi\I L_{4,1}(i) }{7}+\frac{2^{8}\cdot13\cdot23\pi\I L_{3,2}(i)}{7}+2^{7}\cdot3^{2}\zeta_{-3,1,1}\lambda+2^{11}\lambda\R L_{2,2,1}(i)\\&{}+2^{14}\pi\I L_{2,1,1,1}(i)-2^7\cdot3^2\cdot5\pi\lambda\I \Li_4(i)-2^4\zeta_{-3,1}(3^{2}\cdot5\pi^2-2^4\lambda^2)\\&{}+2^{11}(2G+3\pi \lambda)\I L_{2,1,1}(i)-4271\zeta_3^2+2^{2}\zeta_3\lambda\left( 11\cdot23\  \pi ^2-\frac{2^4 \cdot7\lambda^{2}}{3}  \right)\\{}&{}+2^{9}\pi\left( \frac{2^2G\lambda}{\pi}+\pi^2+2\lambda^2 \right)\I L_{2,1}(i)+2^6G^2(\pi^2+2^2\lambda^2)+\frac{2^{6}\pi G\lambda}{3}( 3^{2}\pi^{2}+2^{2}\lambda^{2})\\&{}+\frac{2^{2}\cdot7\pi^4\lambda^2}{3^2\cdot5}+\frac{118171\pi^6}{2^2\cdot3^4\cdot5\cdot7}\\[5pt](1,2,1)&\frac{7\cdot313\zeta_5}{2}+2^6\cdot17\zeta_{-3,1,1}+2^{10}\cdot3\R L_{2,2,1}(i)-2^5\cdot3^3\cdot5\pi\I \Li_{4}(i)+2^{9}\cdot7\pi \I L_{2,1,1}(i)\\{}&{}+2^7\zeta_{-3,1}\lambda+2^2\zeta_{3}\left( \frac{601\pi^{2}}{3} -2^{2}\cdot7\lambda^{2}\right)+2^8(2^3G+3\pi\lambda)\I L_{2,1}(i)+2^8 G^2\lambda\\&{}+2^2\pi G\left( \frac{5\cdot17\pi^{2}}{3} +2^{4}\lambda^{2}\right)+\frac{2\cdot7\pi^{4}\lambda}{3^2\cdot5}\\[5pt](1,2,2)&2^7\cdot13\cdot29\zeta_{-5,1}+2^{9}\cdot37\R L_{4,2}(i)+2^{8}\cdot17 \zeta_{-3,1,1,1}+2^{13}\cdot3\R L_{2,2,1,1}(i)\\&{}+2\cdot7\cdot313\zeta_{5}\lambda+\frac{2^8\cdot5^3\cdot11\pi\I L_{4,1}(i)}{7}+\frac{2^9\cdot11\cdot43\pi\I L_{3,2}(i)}{7}+2^{8}\cdot17\zeta_{-3,1,1}\lambda\\[2pt]&{}+2^{12}\cdot3\lambda\R L_{2,2,1}(i)+2^{12}\cdot13\pi\I L_{2,1,1,1}(i)-2^{7}\cdot3^3\cdot5\pi\lambda\I\Li_4(i)\\{}&{}-2^8\zeta_{-3,1}(2\cdot5\pi^2-\lambda^2)+2^{11}(2^2\cdot3G+7\pi \lambda)\I L_{2,1,1}(i)-2\cdot3^2\cdot1511\zeta_{3}^2\\{}&{}+\frac{2^4\zeta_3\lambda}{3}(601\pi^2-2^2\cdot7\lambda^2)-2^{12}[\I L_{2,1}(i)]^2+2^5\left(\frac{157\pi^{3}}{3}+2^{8}G\lambda+2^{4}\cdot3\pi\lambda^{2}\right)\I L_{2,1}(i)\\&{}+2^7G^2(3\pi^2+2^2\lambda^2)+\frac{2^4\pi G\lambda}{3}(5\cdot17\pi^2+2^4\lambda^2)+\frac{2^2\cdot7\pi^4\lambda^2}{3^{2}\cdot5}+\frac{11\cdot263\cdot409\pi^6}{2^3\cdot3^4\cdot5\cdot7}\\[5pt](1,3,1)&2^4\cdot29\cdot263\zeta_{-5,1}+2^9\cdot3\cdot31\R L_{4,2}(i)+2^7\cdot79\zeta_{-3,1,1,1}+2^{12}\cdot3\cdot5\R L_{2,2,1,1}(i)\\&{}+10321\zeta_{5}\lambda+\frac{2^{10}\cdot3^2\cdot103\pi\I L_{4,1}(i)}{7}+\frac{2^8\cdot2503\pi\I L_{3,2}(i)}{7}+2^{7}\cdot79\zeta_{-3,1,1}\lambda\\&{}+2^{11}\cdot3\cdot5\lambda\R L_{2,2,1}(i)+2^{13}\cdot17\pi\I L_{2,1,1,1}(i)-2^8\cdot3(5^2G+47\pi \lambda)\I\Li_4(i)\\{}&{}-2^{4}\zeta_{-3,1}(5\cdot83\pi^2-2^4\lambda^2)+2^{11}(2\cdot3\cdot7G+13\pi\lambda)\I L_{2,1,1}(i)-3\cdot5\cdot4561\zeta_3^2\\{}&{}+2^5\cdot109\pi\zeta_3G+\frac{2^{2}\zeta_3\lambda}{3}(5113\pi^2-2^4\cdot7\lambda^2)-2^{13}\cdot3[\I L_{2,1}(i)]^2\\&{}+2^6\pi\left( \frac{2^5\cdot3^2G\lambda}{\pi} +3\cdot23\pi^2+2^5\lambda^2\right)\I L_{2,1}(i)+2^{6}\cdot3 G^2(3^2\pi^{2}+2^2\lambda^2)\\&{}+\frac{2^4\pi G\lambda}{3}(3\cdot53\pi^2+2^4\lambda^2)+\frac{2^2\cdot7\pi^4\lambda^2}{3^2\cdot5}+\frac{13\cdot77213\pi^6}{2^3\cdot3^3\cdot5\cdot7}\\[5pt]\hline (2,1,1)&3\cdot7^2\zeta_5+2^7\zeta_{-3,1,1}-2^5\cdot13\pi \I\Li_4(i)+2^{7}\zeta_{-3,1}\lambda+2^9\pi\I L_{2,1,1}(i)\vphantom{\frac{\frac11}1}\\{}&{}+\frac{2\zeta_{3}}{3}(89\pi^2-2^3\cdot3\cdot7\lambda^2)+2^8\pi\lambda\I L_{2,1}(i)+2^2\pi G\left(\frac{13\pi^2}{3}+2^4\lambda ^2\right)+\frac{2\cdot7\pi^4\lambda}{3^2\cdot5}\\[5pt](2,1,2)&-2^6\cdot13\zeta_{-5,1}+2^9\zeta_{-3,1,1,1}+2^2\cdot3\cdot7^2\zeta_5\lambda+\frac{2^8\cdot5\cdot31\pi\I L_{4,1}(i)}{7}+\frac{2^9\cdot41\pi\I L_{3,2}(i)}{7}\\&{}+2^{9}\zeta_{-3,1,1}\lambda+2^{12}\pi\I L_{2,1,1,1}(i)-2^7\cdot13\pi\lambda\I \Li_4(i)-2^{5}\zeta_{-3,1}(5\pi^2-2^3\lambda^2)\\&{}+2^{11}\pi \lambda\I L_{2,1,1}(i)+2\cdot151\zeta_3^2+\frac{2^3\zeta_3\lambda}{3}(89\pi^2-2^3\cdot7\lambda^2)\\&{}+\frac{2^{5}\pi}{3}(13\pi^2+2^4\cdot3\lambda^2)\I L_{2,1}(i)+\frac{2^4\pi G\lambda}{3}(13\pi^2+2^4\lambda^2)+\frac{2^2\cdot7\pi^{4}\lambda^2}{3^2\cdot5}+\frac{197\pi^6}{2^2\cdot3^2\cdot7}\\[5pt]\hline\hline
\end{array}\end{align*}
\end{scriptsize}\end{table}\item
   With  $ \pi i\in\mathfrak Z_{1}(N),N\in\mathbb Z_{\geq3}$ [cf.\ \eqref{eq:piIZ1}] in mind, we can deduce    \eqref{eq:KWY1}--\eqref{eq:KWY_-1_alt} from     \eqref{eq:KWY_inv_binom_alt}, except that we need the next two  paragraphs to account for the CMZV portrayal in \eqref{eq:Zk60} and \eqref{eq:Zk6i}.

Setting $ R_n(z)\colonequals i\big( z e^{n\pi i/5}+\tfrac{1}{z e^{n\pi i/5}}\big)$, we consider\begin{align}\left.
\mathfrak g_{k+1,2}^{\left\{ i,-i,\tau ,-\frac{1}{\tau},-\tau,\frac{1}{\tau}\right\}}\right|_{\tau=i\rho}={}&\left.\mathfrak g_{k+1,2}^{\left\{i,-i,R_0(z),R_1(z),R_2(z),R_3(z)\right\}}\right|_{z=e^{\pi i/5}}.\label{eq:pentagon_prep}
\end{align}The right-hand side of the equation above is a   $ \mathbb Q$-linear  subspace of $ \mathfrak Z_{k+1}(60)$, as one can check by the fibration basis with respect to $z$.\footnote{To show that $ \left.\mathfrak g_{k+1,2}^{\left\{iz,-iz,zR_0(z),zR_1(z),zR_2(z),zR_3(z)\right\}}\right|_{z=e^{\pi i/5}}\subseteq \mathfrak Z_{k+1}(60) $, one may apply the GPL recursion \eqref{eq:GPL_rec} to the GPL differential form \eqref{eq:GPL_diff_form}, while noting that $ \D\log(i z+z R_1(z))=\D \log(z-e^{7\pi i/15})+\D \log(z+e^{2\pi i/15})$ and so forth.}

A na\"ive set inclusion  $  \mathfrak g^{(i/\varrho)}_{k+1}(4)\subseteq\mathfrak Z_{k+1}(12)$ [based on a weaker form of      \eqref{eq:KWY_inv_binom_alt}] is  sharpened by the following invocation of the scaling property  \cite[(2.3)]{Frellesvig2016} for GPLs:
\begin{align}
\left.
\mathfrak g_{k+1,2}^{\left\{ i,-i,\tau ,-\frac{1}{\tau},-\tau,\frac{1}{\tau}\right\}}\right|_{\tau=i/\varrho}\subseteq{}&\mathfrak g_{k+1,2}^{\left\{1,\varrho,\varrho^{2},-1,-\varrho,-\varrho^{2}\right\}}\subseteq\mathfrak Z_{k+1}(6),
\end{align}so the statement in  \eqref{eq:Zk6i} is true.\begin{table}[t]\caption{Continuation of Table \ref{tab:inv_binom}, with $ \omega\colonequals e^{2\pi i/3}$, $ \varrho\colonequals e^{\pi i/3}$, $ \theta=e^{\pi i/4}$, $ G\colonequals\I\Li_2(i)$,  $ \lambda\colonequals\log2$, and $ \varLambda\colonequals\log3$\label{tab:inv_binom2}}\begin{scriptsize}\begin{align*}\begin{array}{c|l}\hline\hline (r,m,\tau)&\begin{array}{@{}l}  \displaystyle\quad\sum_{n=1}^\infty\left[\mathsf H_{2n-1}^{(r)}\right]^m\frac{1}{n\binom{2n}{n}}\left( \frac{1+\tau^{2}}{\tau} \right)^{2n-1}\vphantom{\frac{\frac1\int}{\frac\int1}}\\\displaystyle=\int_0^1\left[\mathsf G^{(r),m}\left(\frac{(1+\tau^2)t}{(1+t^{2})\tau}\right)-\mathsf G^{(r),m}\left(-\frac{(1+\tau^2)t}{(1+t^{2})\tau}\right)\right]\frac{\D t}{1+t^{2}}\vphantom{\frac{\int}{\int}}\end{array}\\\hline(1,1,\varrho)&\frac{7}{2\sqrt{3}}\I\Li_{2}(\omega)-\frac{\pi\varLambda}{3\sqrt{3}}\vphantom{\dfrac{\int}{1}}\\[5pt](1,2,\varrho)&-\frac{2\cdot7}{\sqrt{3}}\I L_{2,1}(\omega)-\frac{2^{5}}{3\sqrt{3}}\I L_{2,1}(i)+\frac{2^3}{\sqrt{3}}\I \Li_{1,1,1}\big(\frac{i}{\varrho},1,-1\big)-\frac{7\varLambda}{\sqrt{3}}\I \Li_2(\omega)\\[2pt]&{}-\frac{2^4G\lambda}{3\sqrt{3}}+\frac{2\pi\lambda^2}{\sqrt{3}}+\frac{\pi\varLambda^2}{3\sqrt{3}}+\frac{2^{2}\pi\lambda}{\sqrt{3}}\R\Li_1\big(\frac{i}{\varrho}\big)-\frac{\pi}{3\sqrt{3}}\big[\R\Li_1\big(\frac{i}{\varrho}\big)\big]^2-\frac{97\pi^3}{2^4\cdot3^4\sqrt{3}}\\[5pt](2,1,\varrho)&\frac{2^{5}}{3\sqrt{3}}\I L_{2,1}(i)-\frac{2^3}{\sqrt{3}}\I \Li_{1,1,1}\big(\frac{i}{\varrho},1,-1\big)+\frac{2^4G\lambda}{3\sqrt{3}}-\frac{2\pi\lambda^2}{\sqrt{3}}-\frac{2^{2}\pi\lambda}{\sqrt{3}}\R\Li_1\big(\frac{i}{\varrho}\big)\\&{}+\frac{\pi}{3\sqrt{3}}\big[\R\Li_1\big(\frac{i}{\varrho}\big)\big]^2+\frac{7\cdot13\pi^{3}}{2^{4}\cdot3^{3}\sqrt{3}}\\[5pt]\hline(1,1,\theta)&\frac{3G}{\sqrt{2}}-\frac{\pi\lambda}{2^2\sqrt{2}}\vphantom{\dfrac{\int}{1}}\\[5pt](1,2,\theta)&-\frac{7}{\sqrt{2}}\I L_{2,1}(i)+\frac{2}{\sqrt{2}}\I \Li_{2,1}(\theta,\theta)-\frac{2\cdot3}{\sqrt{2}}\I L_{2,1}(\theta)+\frac{2}{\sqrt{2}}[\lambda+3\R \Li_1(\theta)]\I\Li_{2}(\theta)\\[2pt]{}&-\frac{7G\lambda}{2\sqrt{2}}+\frac{\pi\lambda^2}{2^{3}\sqrt{2}}-\frac{\pi}{2^2\sqrt{2}}[\R \Li_1(\theta)-\R \Li_1( i\theta)]^2+\frac{3\cdot11\pi^{3}}{2^{8}\sqrt{2}}\\[5pt](2,1,\theta)&\frac1{\sqrt{2}}\I L_{2,1}(i)-\frac{2}{\sqrt{2}}\I \Li_{2,1}(\theta,\theta)+\frac{2\cdot3}{\sqrt{2}}\I L_{2,1}(\theta)-\frac{2}{\sqrt{2}}[\lambda+3\R \Li_1(\theta)]\I\Li_{2}(\theta)\\&{}+\frac{G\lambda}{2\sqrt{2}}+\frac{\pi}{2^2\sqrt{2}}[\R \Li_1(\theta)-\R \Li_1( i\theta)]^2+\frac{61\pi^{3}}{2^{8}\cdot3\sqrt{2}}
\\[5pt]\hline\big(1,1,\frac{i}{\varrho}\big)&2^{2}\I\Li_{2}(\omega)+\frac{\pi\varLambda}{3}\vphantom{\dfrac{\int}{1}}\\[5pt]\big(1,2,\frac{i}{\varrho}\big)&13\I L_{2,1}(\omega)+\frac{13\varLambda}{2}\I\Li_2(\omega)+\frac{\pi\varLambda^{2}}{2^{2}\cdot3}+\frac{59\pi^3}{2^2\cdot3^4}\\[5pt]\big(2,1,\frac{i}{\varrho}\big)&-5\I L_{2,1}(\omega)-\frac{5\varLambda}{2}\I\Li_2(\omega)+\frac{\pi\varLambda^{2}}{2^{2}\cdot3}+\frac{5\cdot7\pi^3}{2^2\cdot3^4}\\[5pt]\hline\hline
\end{array}\end{align*}\end{scriptsize}
\end{table}

With \begin{align}
\int_1^i\left( \frac{1+\tau^{2}}{\tau} \right)^{2n}\frac{i\D \tau}{1+\tau^2}=-\frac{2^{4n-2}}{n\binom{2n}{n}}
\end{align}in hand,
we can prove \eqref{eq:KWY_quad'} by integrating \eqref{eq:KWY_inv_binom_alt} along a straight line segment (on which $ |1+\tau^2|\leq2|\tau|$) and applying Panzer's logarithmic regularization \cite[\S2.3]{Panzer2015} at $\tau=i$.

It is clear that the $ \mathbb Q$-vector spaces appearing on the right-hand sides of   \eqref{eq:KWY1}--\eqref{eq:KWY_quad'}    incorporate their counterparts in \eqref{eq:KWY0}--\eqref{eq:KWY_quad}  as subsets. Therefore, in view of  \eqref{eq:2n-1_H}, we can replace any occurrence of  $ {{\mathsf H}}_{\smash[t]{2}n-1}^{( r)}+\smash[t]{\overline{\mathsf H}}_{\smash[t]{2}n-1}^{(r)}$ in    \eqref{eq:KWY1}--\eqref{eq:KWY_quad'}     by   $ {{\mathsf H}}_{\smash[t]{2}n-1}^{( r)} $, without affecting the corresponding CMZV characterizations.
    \qedhere\end{enumerate}\end{proof}\begin{remark}With Panzer's \texttt{HyperInt} \cite{Panzer2015} and Au's \texttt{MultipleZetaValues} \cite{Au2022a}, one can verify all the entries of Tables \ref{tab:inv_binom}--\ref{tab:inv_binom2}. \eor\end{remark}
\begin{remark}In \cite[(2.1)]{Kalmykov2007}, Kalmykov--Ward--Yost considered\begin{align}
\sum_{n=1}^\infty \left[\prod_{j=1}^M\mathsf H_{n-\smash[t]{1}}^{(r_j)}\right]\left[\prod_{j'=1}^{M'}\mathsf H_{\smash[t]{2}n-\smash[t]{1}}^{(r_{j'}')}\right]\frac{\big(\frac{1-\tau^{2}}{1+\tau^{2}} \big)^{\delta_{0,s}}}{n^{s+1}\binom{2n}{n}}\left( \frac{1+\tau^{2}}{\tau} \right)^{2n}\label{eq:KMYform}
\end{align}among other things. In their capacities as $ \mathbb Q$-linear combinations of the left-hand side in \eqref{eq:KWY1}, these Kalmykov--Ward--Yost  series are also explicitly representable as members of  $\mathfrak g^{(\tau)}_{k+1}(4)$ for $ s\in\mathbb Z_{\geq0}$ and $ k=s+\sum_{j=1}^M r_j+\sum_{{j}'=1}^{{M'}} {r}'_{j'}$. For  $\tau=\varrho$ and small values of $k$, Kalmykov--Veretin  \cite[Appendix B]{KalmykovVeretin2000} proposed closed-form formulae for some special cases of \eqref{eq:KMYform}, based on numerical evidence. Diligent readers may check these experimental findings analytically, using  Au's package \cite{Au2022a}. For generic  $\tau$ and small values of $k$, Davydychev--Kalmykov \cite{DavydychevKalmykov2004} evaluated  \eqref{eq:KMYform} in terms of classical polylogarithms and their associates.      \eor\end{remark}

\begin{remark}
We note that \begin{align}\begin{split}{}&
{{\mathsf H}}_{n-\vphantom{1}\smash[t]{\frac12}}^{(r)}+2\delta_{1,r}\log2-2^{r-1}\left[{{\mathsf H}}_{2n-1}^{(r)}+\smash[t]{\overline{\mathsf H}}_{2n-1}^{(r)}\right]\\={}&
\begin{cases}0, &r=1,  \\
2(1-2^{r-1})\zeta_r, & r\in\mathbb Z_{>1}
\end{cases}\end{split}\label{eq:xi_r}
\end{align}is always a member of $  \mathfrak g_{r}^{(\varrho)}(2)$.  Therefore, the relation in   \eqref{eq:KWY1}  can be rewritten as
\begin{align}
\sum_{n=1}^\infty \left[ \prod_{j=1}^M\mathsf H_{n-\smash[t]{1}}^{(r_j)} \right]\left\{\prod_{\smash[b]{j'}=1}^{M'}\Big[{{\mathsf H}}_{n-\smash[t]{\frac12}}^{( {r}'_{j'})}+2\delta_{1,r'_{j'}}\log2\Big]\right\}\frac{\big(i\sqrt{3} \big)^{\delta_{0,s}}}{n^{s+1}\binom{2n}{n}}\in \mathfrak g^{(\varrho)}_{k+1}(4)\cap \mathfrak Z_{k+1}^{}(12),\tag{\ref{eq:KWY1}$'$}
\end{align}
 whose  $ s=0$ case combines with \eqref{eq:log3H} into\begin{align}
i\sqrt{3}\sum_{n=0}^\infty c_n\prod_{j=1}^M\mathfrak h_n^{(r_j)}\in \mathfrak g^{(\varrho)}_{k+1}(4)\cap  \mathfrak Z_{k+1}^{}(12).\label{eq:Z12_inv_binom}
\end{align} Unfortunately, this result  is not sharp enough  [cf.\ \eqref{eq:W3_Z_k(6)} in \S\ref{subsec:eps_expn}] to recover the  MPLs at sixth roots exhibited in Theorem \ref{thm:Zk(6)}, let alone the third roots sharpened by Theorem \ref{thm:Zk(3)}.

We initially confirmed the sufficiency of third roots  in the MPL characterizations of  $ m_k(1+x_1+x_2)$ for small positive  integers $k$\ by checking manually that these series representations [cf.\ \eqref{eq:W3deriv}] of hyper-Mahler measures fit into the framework of  \eqref{eq:KWY_inv_binom_star} and   \eqref{eq:KWY0}. Later on, we performed  variable substitutions  and  contour deformations on the Borwein--Borwein--Straub--Wan integrals (Theorem \ref{thm:Zk(3)}) that led us to $ \pi i\,m_k^{}(1+x_1+x_2)\in\mathfrak Z_{k+1}^{}(3)$ as a whole, for all positive integers $k$.
   \eor\end{remark}

In parallel to the inverse binomial sums treated in Corollary  \ref{cor:inv_binom_sum}, we will soon present period characterizations for the binomial sums, the latter of which played their part in  some hyper-Mahler measures studied by Kurokawa--Lal\'in--Ochiai \cite[Theorem 19]{KurokawaLalinOchiai2008}. \begin{corollary}[Binomial sums via GPLs]\label{cor:binom_sum}As before, we set    $ r_1,\dots,r_M\in\mathbb Z_{>0}$ and  $ {r}'_1,\dots,{r}'_{{M'}}\in\mathbb Z_{>0}$. \begin{enumerate}[leftmargin=*,  label=\emph{(\alph*)},ref=(\alph*),
widest=a, align=left]
\item If $ 4|\chi|\leq |1+\chi|^{2}$ and $s\in\mathbb Z_{>0} $, then we have (cf.\ \cite[(1.1) and  (5.2b)]{Kalmykov2007})\begin{align}\sum_{n=1}^\infty \left[ \prod_{j=1}^M\mathsf H_{n-\smash[t]{1}}^{(r_j)} \right]\frac{\binom{2n}{n}}{n^{s}}\left[ \frac{\chi}{(1+\chi)^{2}} \right]^{n}\in  \mathfrak g^{(\chi)}_{k}(2)\cap\mathfrak g_{k,1}^{\left\{-1,\chi,\frac{1}{\vphantom{1}\smash[b]{\chi}}\right\}},\label{eq:KWY2}\end{align}where $ k=s+\sum_{j=1}^Mr_j$. \item If $ 2|\upsilon|\leq |1+\upsilon^{2}| $ and $s\in\mathbb Z_{>0} $, then we have\begin{align}\begin{split}&
\sum_{n=1}^\infty \left[\prod_{j=1}^M{{\mathsf H}}_{\smash[t]{}n-1}^{( {r}_{j})}\right]\left\{\prod_{\smash[b]{j'}=1}^{M'}\Big[{{\mathsf H}}_{\smash[t]{2}n-1}^{( {r}'_{j'})}+\smash[t]{\overline{\mathsf H}}_{\smash[t]{2}n-1}^{( {r}'_{j'})}\Big]\right\}\frac{\binom{2n}{n}}{n^{s}}\left( \frac{\upsilon}{1+\upsilon^{2}} \right)^{2n}\\\in{}&\mathfrak g_{k}^{(\upsilon)}(4)\cap\mathfrak g_{k,2}^{{{\left\{ i,-i,\upsilon,-\frac{1}{\upsilon},-\upsilon,\frac{1}{\upsilon}\right\}}} },\end{split}\label{eq:KWY2_alt}
\end{align}where $ k=s+\sum_{j=1}^Mr_j+\sum_{{j}'=1}^{{M'}} {r}'_{j'}$.\end{enumerate}\end{corollary}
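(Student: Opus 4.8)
The plan is to follow the proof of Corollary~\ref{cor:inv_binom_sum} step for step, replacing the convergent Beta-integral representation of $\tfrac{1}{n\binom{2n}{n}}$ by a Cauchy-type contour representation of the \emph{growing} factor $\binom{2n}{n}$, and to reduce part~(b) to part~(a) through the double cover $\chi=\upsilon^2$ together with the even-part generating functions already assembled in \eqref{eq:Gsymsum} and \eqref{eq:G_promo}. The common starting point is the weight-$k$ membership
\[
F(z)\colonequals\sum_{n=1}^\infty\Bigl[\prod_{j=1}^M\mathsf H_{n-1}^{(r_j)}\Bigr]\frac{z^n}{n^s}\in\Span_{\mathbb Q}\bigl\{L_{\bm A}(z):w(\bm A)=k\bigr\},\qquad k=s+\textstyle\sum_j r_j,
\]
which is established inside the proof of Corollary~\ref{cor:inv_binom_sum} from the generating-function result \eqref{eq:nonlin_Euler_per} by iterating the GPL recursion \eqref{eq:GPL_rec}; here $F$ is a weight-$k$ generalized polylogarithm whose only singularities lie at $z\in\{0,1\}$.

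For part~(a) I would extract the binomial coefficient by $\binom{2n}{n}=\tfrac{1}{2\pi i}\oint(1+\eta)^{2n}\eta^{-n-1}\D\eta$ and interchange summation with integration to obtain
\[
\sum_{n=1}^\infty\Bigl[\prod_{j=1}^M\mathsf H_{n-1}^{(r_j)}\Bigr]\frac{\binom{2n}{n}}{n^s}\Bigl[\frac{\chi}{(1+\chi)^2}\Bigr]^n=\frac{1}{2\pi i}\oint\frac{1}{\eta}\,F\!\left(\frac{\chi(1+\eta)^2}{(1+\chi)^2\eta}\right)\D\eta,
\]
on a contour encircling $\eta=0$ along which $\bigl|\tfrac{\chi(1+\eta)^2}{(1+\chi)^2\eta}\bigr|<1$. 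The decisive point is that the substitution $z=\tfrac{\chi(1+\eta)^2}{(1+\chi)^2\eta}$ sends $z=0$ to $\eta=-1$ and $z=1$ to the two roots $\eta\in\{\chi,1/\chi\}$ of $\chi\eta^2-(1+\chi^2)\eta+\chi=0$; thus, by inducting on weight with the GPL differential form \eqref{eq:GPL_diff_form}, each $L_{\bm A}\bigl(\tfrac{\chi(1+\eta)^2}{(1+\chi)^2\eta}\bigr)$ rewrites as a $\mathbb Z$-combination of GPLs $G(\beta_1,\dots,\beta_k;\eta)$ with $\beta_i\in\{0,-1,\chi,1/\chi\}$. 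Deforming the contour onto tight loops around the branch points $\eta\in\{-1,\chi,1/\chi\}$ --- exactly the contour-deformation device used for \eqref{eq:KWY_inv_binom_alt} --- the net contribution comes from the jump discontinuities of $F$, each carrying a factor $\pi i$ that cancels against the prefactor $\tfrac{1}{2\pi i}$; this cancellation is what keeps the output at weight $k$. Collecting the terms places the sum in $\mathfrak g_{k,1}^{\{-1,\chi,1/\chi\}}$, and a final fibration with respect to $\chi$ (as in the passage to $\mathfrak g_{k+1}^{(\xi)}(2)$ in Corollary~\ref{cor:inv_binom_sum}, using powers of $\pi i$) yields the companion membership in $\mathfrak g_k^{(\chi)}(2)$.

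For part~(b) I would set $\chi=\upsilon^2$, so that $\tfrac{\chi}{(1+\chi)^2}=\bigl(\tfrac{\upsilon}{1+\upsilon^2}\bigr)^2$ reproduces the variable $U$ and the constraint $2|\upsilon|\le|1+\upsilon^2|$ becomes $4|\chi|\le|1+\chi|^2$. Running part~(a) over this double cover, each letter $\beta\in\{-1,\chi,1/\chi\}$ lifts to its two square roots $\pm\sqrt\beta$, which produces exactly the level-$4$ set $\{i,-i,\upsilon,-\upsilon,1/\upsilon,-1/\upsilon\}$; the symmetric harmonic combination $\mathsf H_{2n-1}^{(r')}+\overline{\mathsf H}_{2n-1}^{(r')}$, fed through \eqref{eq:Gsymsum} and \eqref{eq:G_promo}, makes the pulled-back integrand even in $\upsilon$ and hence compatible with the cover, so that the same contour deformation and fibration deliver membership in $\mathfrak g_k^{(\upsilon)}(4)\cap\mathfrak g_{k,2}^{\{i,-i,\upsilon,-1/\upsilon,-\upsilon,1/\upsilon\}}$. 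Finally, \eqref{eq:2n-1_H} shows the named spaces are unaffected if one replaces $\mathsf H_{2n-1}+\overline{\mathsf H}_{2n-1}$ by $\mathsf H_{2n-1}$ alone.

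The main obstacle is the contour analysis forced by the growth $\binom{2n}{n}\sim 4^n/\sqrt{\pi n}$. One must choose the $\eta$-contour so that it encircles $\eta=0$ yet separates it from the branch points $\eta=\chi,1/\chi$ at which the pulled-back $F$ attains its singularity at argument $1$, verify uniform convergence of the interchanged series there (this is where $4|\chi|\le|1+\chi|^2$ and $s\in\mathbb Z_{>0}$ enter, the latter ensuring convergence in the boundary case $|w|=\tfrac14$), and --- most delicately --- track the weight: the lone prefactor $\tfrac{1}{2\pi i}$ must absorb precisely one power of $\pi i$ from the monodromy of $F$ across each cut, so that the period has weight $k$ rather than the naive $k+1$. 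Once this bookkeeping is secured, the remaining manipulations reduce to routine applications of \eqref{eq:GPL_rec}, \eqref{eq:GPL_diff_form}, \eqref{eq:piIZ1} and Brown's fibration, precisely as in the inverse-binomial case.
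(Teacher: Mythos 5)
Your proposal is correct and takes essentially the same route as the paper: your Cauchy coefficient extraction $\binom{2n}{n}=\frac{1}{2\pi i}\oint(1+\eta)^{2n}\eta^{-n-1}\D\eta$ yields exactly the unit-circle integral ${\counterint}_{|t|=1}F(\varXi(\chi,t))\frac{\D t}{2\pi it}$ with $\varXi(\chi,t)=\frac{(1+t)^{2}\chi}{(1+\chi)^{2}t}$ that the paper reaches via the Beta-integral representation of $\binom{2n}{n}/16^{n}$ and the substitution $x=\frac12-\frac{1}{4i}\big(\sqrt{t}-\frac{1}{\sqrt{t}}\big)$ (and which the paper uses verbatim in part (b)). All the remaining steps — fibrating the pulled-back polylogarithms to expose branch cuts with endpoints in $\big\{0,-1,\chi,\frac1\chi\big\}$, deforming onto tight loops whose $\pi i$-valued jumps cancel the $\frac{1}{2\pi i}$ prefactor so the output stays at weight $k$, the concluding fibration in $\chi$, and the passage to part (b) through the square-root lift $\big\{i,-i,\pm\upsilon,\pm\frac1\upsilon\big\}$ using the even generating functions \eqref{eq:Gsymsum} and \eqref{eq:G_promo} — coincide with the paper's argument.
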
\begin{proof}All the infinite series of our concern are convergent, thanks to   the asymptotic behavior $ \binom{2n}n={\frac{4^{n}}{\sqrt{n\pi}}}\left[ 1+O\left( \frac{1}{n} \right) \right]$ as $ n\to\infty$.\begin{enumerate}[leftmargin=*,  label={(\alph*)},ref=(\alph*),
widest=a, align=left]
\item When $s\in\mathbb Z_{>0} $,  the left-hand side of \eqref{eq:KWY2} is \begin{align}\begin{split}&
\sum_{n=1}^\infty \left[ \prod_{j=1}^M\mathsf H_{n-\smash[t]{1}}^{(r_j)} \right]\left[ \frac{16\chi}{(1+\chi)^{2}} \right]^{n}\int_0^1\frac{x^{{n}}(1-x)^n}{\pi n^{s}}\frac{\D x}{\sqrt{x(1-x)}}\\\in{}&\Span_{\mathbb Q}\left\{ \int_0^1L_{\bm A}\left(\frac{16\chi x(1-x)}{(1+\chi)^{2}}\right)\frac{\D x}{\pi\sqrt{x(1-x)}}\left| w(\bm A)=s+\sum_{j=1}^M r_j\right.\right\}.\end{split}
\end{align} Setting $ x=\frac{1}{2}-\frac{1}{4i}\big( \sqrt{t} -\frac{1}{\sqrt{t}}\big)$ for $t$ on the unit circle, we have \begin{align}
\int_0^1L_{\bm A}\left(\frac{16\chi x(1-x)}{(1+\chi)^{2}}\right)\frac{\D x}{\pi\sqrt{x(1-x)}}={\counterint}_{\!\!\!\!|t|=1}L_{\bm A}\left( \varXi(\chi,t) \right)\frac{\D t}{2\pi it},
\end{align}where $\varXi(\chi,t)\colonequals \frac{(1+t)^{2}\chi}{(1+\chi)^{2}t} $.

The following fibration of the multi-dimensional polylogarithm\begin{align}\begin{split}&
L_{\bm A}\left( \varXi(\chi,t) \right)\\\in{}&\Span_{\mathbb Q}\left\{ \left.\left(\pi i\frac{\I  \varXi(\chi,t) }{|\I \varXi(\chi,t) |}\right)^\ell Z_j\log^r\frac{1}{ \varXi(\chi,t)}\right|\begin{smallmatrix}\ell,j,r\in\mathbb Z_{\geq0}\\Z_j\in\mathfrak Z_j(1)\\\ell+j+r=w(\bm A)\end{smallmatrix} \right\}\\{}&+\Span_{\mathbb Q}\left\{ \left.\left(\pi i\frac{\I  \varXi(\chi,t) }{|\I \varXi(\chi,t) |}\right)^\ell Z_jL_{\bm B}\left( \frac{1}{ \varXi(\chi,t)} \right)\log^r\frac{1}{ \varXi(\chi,t)}\right| \begin{smallmatrix}\ell,j,r\in\mathbb Z_{\geq0};w(\bm B)\in\mathbb Z_{>0}\\Z_j\in\mathfrak Z_j(1)\\\ell+j+w(\bm B)+r=w(\bm A)\end{smallmatrix}\right\}\end{split}\label{eq:recip_fib}
\end{align} highlights its branch cuts   \cite{Maitre2012,Panzer2015},   on which $\varXi(\chi,t)>1$. If $ f(\varXi(\chi,t))$ belongs to the first summand on the right-hand side of \eqref{eq:recip_fib}, then \begin{align}
{\counterint}_{\!\!\!\!|t|=1}f\left( \varXi(\chi,t) \right)\frac{\D t}{2\pi it}\in\mathfrak g_{w(\bm A)}^{(\chi)}(2)\cap\mathfrak g_{w(\bm A),1}^{\left\{-1,\chi,\frac{1}{\vphantom{1}\smash[b]{\chi}}\right\}}\label{eq:f_int_C}
\end{align}follows from the Borwein--Straub relation [the ``$\in$'' part of \eqref{eq:BorweinStraubZ1}] and the fact that $ \log\frac{\chi}{(1+\chi)^{2}}=G(0;\chi)-2G(-1;\chi)+2\pi i\mathbb Z\in\mathfrak g_{1}^{(\chi)}(2)\cap\mathfrak g_{1,1}^{\left\{-1,\chi,\frac{1}{\vphantom{1}\smash[b]{\chi}}\right\}}$. If $ f(\varXi(\chi,t))$ belongs to the second summand on the right-hand side of \eqref{eq:recip_fib}, then  \eqref{eq:f_int_C} remains valid, for two reasons: (1)~One can shrink the contour  to  tight loops wrapping around the branch cuts inside the unit circle,  whose boundary points belong to $ \big\{0,-1,\chi,\frac{1}{\chi}\big\} $; (2)~Across the branch cuts, the integrand $ f(\varXi(\chi,t))$ has jumps  in the $ \mathbb Q$-vector space $\pi i \mathfrak g_{w(\bm A)-1}^{(1/\varXi(\chi,t))}(1) $, so \eqref{eq:f_int_C} results from fibrations of GPLs with respect to $t$ and integrations of GPLs along the branch cuts.

 The arguments in the last paragraph take us to the the right-hand side of \eqref{eq:KWY2}.     \item When $s\in\mathbb Z_{>0} $,  the left-hand side of \eqref{eq:KWY2_alt} is \begin{align}\begin{split}&
\sum_{n=1}^\infty \left[ \prod_{j=1}^M\mathsf H_{n-\smash[t]{1}}^{(r_j)} \right]\left\{\prod_{\smash[b]{j'}=1}^{M'}\Big[{{\mathsf H}}_{\smash[t]{2}n-1}^{( {r}'_{j'})}+\smash[t]{\overline{\mathsf H}}_{\smash[t]{2}n-1}^{( {r}'_{j'})}\Big]\right\}\frac{1}{ n^{s}}{\counterint}_{\!\!\!\!|\tau|=1}\left[ \frac{(1+\tau^2)\upsilon}{(1+\upsilon^2)\tau} \right]^{2n}\frac{\D\tau}{2\pi i\tau}\\\in{}&\Span_{\mathbb Q}\left\{\left. {\counterint}_{\!\!\!\!|\tau|=1}\frac{G\left( \widetilde{\alpha}_1,\dots, \widetilde{\alpha}_{w};\frac{(1+\tau^2)\upsilon}{(1+\upsilon^2)\tau} \right)\D \tau}{2\pi i\tau}\right|\begin{smallmatrix}\widetilde{\alpha}_1,\dots, \widetilde{\alpha}_{w-1}\in\{-1,0,1\};\widetilde{\alpha}_w\in\{-1,1\}\\ w=s+\sum_{j=1}^M r_j+\sum_{{j}'=1}^{{M'}} {r}'_{j'}\end{smallmatrix}\right\},\end{split}
\end{align}  in the light of \eqref{eq:G_promo}.

 To verify the claim in   \eqref{eq:KWY2_alt}, use GPL fibrations and contour deformations as in our proof of \eqref{eq:KWY2}.   \qedhere\end{enumerate}\end{proof}\begin{remark}For the treatment of the $s=0$ case in binomial sums, see \cite[\S2]{Zhou2023SunCMZV}. The contour deformation techniques in the proof above can also be generalized to infinite sums involving $ \binom{3n}n$ and $ \binom{4n}{2n}$ \cite[\S3]{Zhou2023SunCMZV}.\eor\end{remark}
 \subsection{Infinite series related to $ m_k(1+x_1+x_2+x_{3})$\label{subsec:seriesW4}}Akin to the descriptions at the beginning of the last subsection, we once evaluated $ m_k(1+x_1+x_2+x_{3}),1\leq k\leq6$   by taking $ k$-th order derivatives of Crandall's hypergeometric representation  \cite[(6.8)]{BSWZ2012} $ W_4(s)=U_4(s)+V_4(s)$, where \begin{align}\label{eq:U4Crandall}
U_{4}(s)\colonequals{}&\frac{2^s}{\cot\frac{\pi  s}{2}}  \left[ \frac{\Gamma \left(1+\frac{s}{2}\right)}{\sqrt{\pi}\Gamma \left(\frac{3+s}{2}\right)} \right]^3{_4F_3}\left(\!\!\left.\begin{smallmatrix}\frac{1}{2},\frac{1}{2},\frac{1}{2},1+\frac{s}{2}\\[2pt]\frac{3+s}{2},\frac{3+s}{2},\frac{3+s}{2}\end{smallmatrix}\right|1\right),\\\label{eq:V4Crandall}V_{4}(s)\colonequals{}&\frac{2^s \Gamma \left(\frac{1+s}{2}\right) }{\sqrt{\pi } \Gamma \left(1+\frac{s}{2}\right)}{_4F_3}\left(\!\!\left.\begin{smallmatrix}\frac{1}{2},-\frac{s}{2},-\frac{s}{2},-\frac{s}{2}\\[2pt]1,1,\frac{1-s}{2}\end{smallmatrix}\right|1\right).
\end{align}During these evaluations, the derivative\begin{align}
\left.\frac{\D^k U_4(s)}{\D s^k}\right|_{s=0}\quad\left[ \text{resp. }\left.\frac{\D^k V_4(s)}{\D s^k}\right|_{s=0} \right]
\end{align} produced  a $ \mathbb Q$-linear combination of\begin{align}
\pi^{2(r_{0}-1)}{}&\sum_{n=1}^\infty \frac{\prod_{j=1}^\ell\left[\mathsf H_{n-\smash[t]{1}}^{(r_j)}-3{{\mathsf H}}_{\smash[t]{}n-\smash[t]{\frac12}}^{( {r}_{j})}+2\delta_{1,r_{j}}\log 2\right]}{(2n-1)^{3}}\label{eq:u4}\\\left(\text{resp. }\pi^{2\widetilde r_0} \vphantom{{\frac{\prod_{j=1}^{\widetilde{\ell}} \left[3\mathsf H_{n-\smash[t]{1}}^{(\widetilde{r}_j)}-{{\mathsf H}}_{\smash[t]{}n-\smash[t]{\frac12}}^{(\widetilde{ {r}}_{j})}-2\delta_{1,\widetilde{r}_{j}}\log 2\right]}{n^3}}}\right.{}&\!\!\left.\sum_{n=1}^\infty\frac{\prod_{j=1}^{\widetilde{\ell}} \left[3\mathsf H_{n-\smash[t]{1}}^{(\widetilde{r}_j)}-{{\mathsf H}}_{\smash[t]{}n-\smash[t]{\frac12}}^{(\widetilde{ {r}}_{j})}-2\delta_{1,\widetilde{r}_{j}}\log 2\right]}{n^3}\right), \label{eq:v4}\end{align}where $ 2r_0+\sum_{j=1}^\ell r_j=k-1$, $ 2\widetilde{r}_0+\sum_{j=1}^{\widetilde{\ell}} \widetilde{r}_j=k+1$, and  $ r_0,\widetilde{r}_0\in\mathbb Z_{\geq0}$. (Here, both $ \ell $ and $\widetilde \ell$ may be zero, whereupon the empty products are equal to 1.)

The next corollary will allow us to   strengthen Theorem \ref{thm:Zk(2)} as\begin{align}
\left.\frac{\D^k U_4(s)}{\D s^k}\right|_{s=0}\in\frac{1}{\pi^2}\mathfrak Z_{k+2}(2),\quad \left.\frac{\D^k V_4(s)}{\D s^k}\right|_{s=0}\in\mathfrak Z_{k}(2).\label{eq:UV_AMZV}
\end{align}Such AMZV characterizations will also apply to  Broadhurst's series in \eqref{eq:BroadhurstEuler}.

\begin{corollary}[Euler sums, HPLs, and Goncharov--Deligne periods]\label{cor:odd_var}\begin{enumerate}[leftmargin=*,  label=\emph{(\alph*)},ref=(\alph*),
widest=a, align=left]
\item For $ s\in\mathbb Z_{\geq0}$,   $ r_1,\dots,r_M\in\mathbb Z_{>0}$, and  $ {r}'_1,\dots,{r}'_{{M'}}\in\mathbb Z_{>0}$, we have\begin{align}
\sum_{n=1}^\infty \left[\prod_{j=1}^M\mathsf H_{n-\smash[t]{1}}^{(r_j)}\right]\left\{\prod_{\smash[b]{j'}=1}^{M'}\Big[{{\mathsf H}}_{\smash[t]{2}n-1}^{( {r}'_{j'})}+\smash[t]{\overline{\mathsf H}}_{\smash[t]{2}n-1}^{( {r}'_{j'})}\Big]\right\}\frac{z^{2n-1}}{(2n-1)^{s+1}}\in\mathfrak G^{(z)}_{k+1}(2),\label{eq:BE_HPL}
\end{align}where $ |z|<1$, $ k=s+\sum_{j=1}^M r_j+\sum_{{j}'=1}^{{M'}} {r}'_{j'}$. For $ s\in\mathbb Z_{\geq0}$,   $ r_1,\dots,r_M\in\mathbb Z_{>0}$, and  $ {r}'_1,\dots,{r}'_{{M'}}\in\mathbb Z_{>0}$, the result above extends to \begin{align}
\sum_{n=1}^\infty \left[\prod_{j=1}^M\mathsf H_{n-\smash[t]{1}}^{(r_j)}\right]\left\{\prod_{\smash[b]{j'}=1}^{M'}\Big[{{\mathsf H}}_{\smash[t]{2}n-1}^{( {r}'_{j'})}+\smash[t]{\overline{\mathsf H}}_{\smash[t]{2}n-1}^{( {r}'_{j'})}\Big]\right\}\frac{1}{(2n-1)^{s+2}}\in{}&\mathfrak Z_{k+2}(2),\label{eq:BE_Zk(2)}\\\sum_{n=1}^\infty \left[\prod_{j=1}^M\mathsf H_{n-\smash[t]{1}}^{(r_j)}\right]\left\{\prod_{\smash[b]{j'}=1}^{M'}\Big[{{\mathsf H}}_{\smash[t]{2}n-1}^{( {r}'_{j'})}+\smash[t]{\overline{\mathsf H}}_{\smash[t]{2}n-1}^{( {r}'_{j'})}\Big]\right\}\frac{(-1)^{n}}{(2n-1)^{s+1}}\in{}& i\mathfrak Z_{k+1}(4),\label{eq:BE_Zk(4)}
\end{align}where $ k=s+\sum_{j=1}^M r_j+\sum_{{j}'=1}^{{M'}} {r}'_{j'}$.\item For $ s\in\mathbb Z_{\geq0}$,   $ r_1,\dots,r_M\in\mathbb Z_{>0}$, and  $ {r}'_1,\dots,{r}'_{{M'}}\in\mathbb Z_{>0}$, we have\begin{align}
\sum_{n=1}^\infty \left[\prod_{j=1}^M\mathsf H_{n-\smash[t]{1}}^{(r_j)}\right]\left\{\prod_{\smash[b]{j'}=1}^{M'}\Big[{{\mathsf H}}_{\smash[t]{2}n-1}^{( {r}'_{j'})}+\smash[t]{\overline{\mathsf H}}_{\smash[t]{2}n-1}^{( {r}'_{j'})}\Big]\right\}\frac{z^{2n}}{n^{s+1}}\in\mathfrak G^{(z)}_{k+1}(2),\label{eq:BE_HPL_even}
\end{align}where $ |z|<1$, $ k=s+\sum_{j=1}^M r_j+\sum_{{j}'=1}^{{M'}} {r}'_{j'}$. For $ s\in\mathbb Z_{\geq0}$,   $ r_1,\dots,r_M\in\mathbb Z_{>0}$, and  $ {r}'_1,\dots,{r}'_{{M'}}\in\mathbb Z_{>0}$, the result above extends to \begin{align}
\sum_{n=1}^\infty \left[\prod_{j=1}^M\mathsf H_{n-\smash[t]{1}}^{(r_j)}\right]\left\{\prod_{\smash[b]{j'}=1}^{M'}\Big[{{\mathsf H}}_{\smash[t]{2}n-1}^{( {r}'_{j'})}+\smash[t]{\overline{\mathsf H}}_{\smash[t]{2}n-1}^{( {r}'_{j'})}\Big]\right\}\frac{1}{n^{s+2}}\in{}&\mathfrak Z_{k+2}(2),\label{eq:BE_Zk(2)_even}\\\sum_{n=1}^\infty \left[\prod_{j=1}^M\mathsf H_{n-\smash[t]{1}}^{(r_j)}\right]\left\{\prod_{\smash[b]{j'}=1}^{M'}\Big[{{\mathsf H}}_{\smash[t]{2}n-1}^{( {r}'_{j'})}+\smash[t]{\overline{\mathsf H}}_{\smash[t]{2}n-1}^{( {r}'_{j'})}\Big]\right\}\frac{(-1)^{n}}{n^{s+1}}\in{}& \mathfrak Z_{k+1}(4),\label{eq:BE_Zk(4)_even}
\end{align}where $ k=s+\sum_{j=1}^M r_j+\sum_{{j}'=1}^{{M'}} {r}'_{j'}$.\end{enumerate}
\end{corollary}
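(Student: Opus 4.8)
The plan is to bootstrap the two generating-function identities already secured inside the proof of Corollary~\ref{cor:inv_binom_sum}, namely the odd-power expansion \eqref{eq:Gsymsum} and the even-power expansion \eqref{eq:G_promo}, and then to install the extra inverse-power weights $(2n-1)^{-(s+1)}$ or $n^{-(s+1)}$ by iterated integration against $\tfrac{\D x}{x}$. Writing $c_n\colonequals\big[\prod_{j=1}^M\mathsf H_{n-1}^{(r_j)}\big]\big\{\prod_{j'=1}^{M'}[\mathsf H_{2n-1}^{(r'_{j'})}+\overline{\mathsf H}_{2n-1}^{(r'_{j'})}]\big\}$ and $w\colonequals\sum_j r_j+\sum_{j'} r'_{j'}$, I would read \eqref{eq:Gsymsum} as the statement about $\Phi_0(z)=\sum_n c_n z^{2n-1}$ and set $\Phi_{m+1}(z)\colonequals\int_0^z\Phi_m(x)\tfrac{\D x}{x}$, so that $\Phi_m(z)=\sum_n c_n(2n-1)^{-m}z^{2n-1}$. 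The first integration is the only one doing nontrivial work on the rational prefactors: using the partial fractions $\tfrac{1}{x(1-x)}=\tfrac1x+\tfrac1{1-x}$ and $\tfrac{1}{x(1+x)}=\tfrac1x-\tfrac1{1+x}$, together with the GPL recursion \eqref{eq:GPL_rec} and the scaling relation $g(-x)\in\mathfrak G^{(x)}_w(2)$, it carries a representative $\tfrac{g(x)}{1-x}-\tfrac{g(-x)}{1+x}$ into $\mathfrak G^{(z)}_{w+1}(2)$; every subsequent integration merely prepends a letter $0$ and raises the weight by one. After $s+1$ integrations this gives $\Phi_{s+1}\in\mathfrak G^{(z)}_{k+1}(2)$, which is \eqref{eq:BE_HPL}. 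Part~(b) is quicker still: \eqref{eq:G_promo}, applied with its exponent set to $s+1\geq1$, delivers $\sum_n c_n n^{-(s+1)}z^{2n}\in\mathfrak G^{(z)}_{k+1}(2)$ directly, which is \eqref{eq:BE_HPL_even}.

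The closed-form membership statements then follow by boundary specialization. For the $z\to1$ families \eqref{eq:BE_Zk(2)} and \eqref{eq:BE_Zk(2)_even} the inverse-power exponent is $s+2\geq2$, which guarantees absolute convergence at $z=1$ and a leading GPL letter $0\neq1$; continuity at $z=1$ together with $\mathfrak G^{(1)}_{k+2}(2)=\mathfrak Z_{k+2}(2)$ then yields the claims. For the $z\to i$ families I would use the arithmetic $i^{2n}=(-1)^n$ and $i^{2n-1}=-i(-1)^n$: the even sums give $\sum_n c_n(-1)^n n^{-(s+1)}=\Psi_{s+1}(i)$ with no residual factor, whereas the odd sums give $\sum_n c_n(-1)^n(2n-1)^{-(s+1)}=i\,\Phi_{s+1}(i)$ with a residual $i$, and this is exactly the origin of the $i\mathfrak Z_{k+1}(4)$ in \eqref{eq:BE_Zk(4)} against the plain $\mathfrak Z_{k+1}(4)$ in \eqref{eq:BE_Zk(4)_even}. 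In both cases the level promotion $\mathfrak G^{(i)}_{k+1}(2)\subseteq\mathfrak Z_{k+1}(4)$ is supplied by the GPL-to-MPL dictionary \eqref{eq:GPL_MPL}: converting $G(\alpha_1,\dots,\alpha_{k+1};i)$ with $\alpha_j\in\{0,1,-1\}$ produces multiple polylogarithms whose arguments are ratios drawn from $\{1,-1,i,-i\}$, all fourth roots of unity.

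The main obstacle I expect lies not in the algebra but in the boundary bookkeeping for the minimal-exponent cases. For \eqref{eq:BE_Zk(4)} and \eqref{eq:BE_Zk(4)_even} at $s=0$ the weighting is only $(2n-1)^{-1}$ or $n^{-1}$, so the series converge merely conditionally; I would need to confirm that $z=i$ is an admissible boundary limit of the relevant GPL—its leading letter being $0$ once a single integration has been performed—so that an Abelian argument legitimizes the substitution. A secondary technical point is that the individual GPL integrals $\int_0^z g(x)\tfrac{\D x}{x}$ arising in the first integration step are superficially divergent at $x=0$, even though the combination $\Phi_0(x)/x$ is regular there; I would invoke the shuffle-preserving logarithmic regularizations of Panzer \cite[\S2.3]{Panzer2015}, exactly as in the proof of Theorem~\ref{thm:Zk(6)}, to render each term well defined while leaving the net value unchanged. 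Once these convergence matters are settled, the remaining verifications are the routine recursion-and-fibration manipulations already rehearsed in the preceding proofs.
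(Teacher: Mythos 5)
Your proposal is correct and follows essentially the same route as the paper: the paper likewise proves \eqref{eq:BE_HPL} by dividing \eqref{eq:Gsymsum} by $z$ and iterating $f(z)\mapsto\int_0^z f(x)\frac{\D x}{x}$ (recording the symmetrized refinement \eqref{eq:Sun_prep}), obtains part (b) directly from \eqref{eq:G_promo}, and then deduces \eqref{eq:BE_Zk(2)}--\eqref{eq:BE_Zk(4)} and their even counterparts by specializing the resulting GPL representations at $z=1$ and $z=i$, with convergence checked as in the proof of Theorem \ref{thm:nonlin_Euler}. Your additional bookkeeping---the factor arithmetic $i^{2n-1}=-i(-1)^n$ explaining the residual $i$ in \eqref{eq:BE_Zk(4)}, the inclusion $\mathfrak G^{(i)}_{k+1}(2)\subseteq\mathfrak Z_{k+1}(4)$ via \eqref{eq:GPL_MPL}, and the Abelian boundary argument for the minimal-exponent cases---merely makes explicit what the paper leaves implicit, so the two proofs coincide in substance.
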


\begin{proof}\begin{enumerate}[leftmargin=*,  label={(\alph*)},ref=(\alph*),
widest=a, align=left]
\item To prove \eqref{eq:BE_HPL} for $s=0$, divide both sides of \eqref{eq:Gsymsum} by $z$ and integrate. For progress onto all  $ s\in\mathbb Z_{>0}$, apply the operations $ f(z)\mapsto \int_0^z f(x)\frac{\D x}{x}$ iteratively. In fact, this method allows us to deduce a slightly stronger statement:\begin{align}\begin{split}
&\sum_{n=1}^\infty \left[\prod_{j=1}^M\mathsf H_{n-\smash[t]{1}}^{(r_j)}\right]\left\{\prod_{\smash[b]{j'}=1}^{M'}\Big[{{\mathsf H}}_{\smash[t]{2}n-1}^{( {r}'_{j'})}+\smash[t]{\overline{\mathsf H}}_{\smash[t]{2}n-1}^{( {r}'_{j'})}\Big]\right\}\frac{z^{2n-1}}{(2n-1)^{s+1}}\\\in{}&\mathfrak  \Span_{\mathbb Q}\left\{ g(z)-g(-z)\left|g(z)\in\mathfrak G^{(z)}_{s+1+\sum_{j=1}^M r_j+\sum_{j'=1}^{M'} r'_{j'}}(2)\right.\right\}.\end{split}\label{eq:Sun_prep}
\end{align}

When $ s\in\mathbb Z_{\geq0}$ (resp.\ $ s\in\mathbb Z_{\geq1}$), the integral representation for the left-hand side of  \eqref{eq:BE_HPL} converges at $ z=i$ (resp.\ $ z=1$), according to variations on the proof of Theorem \ref{thm:nonlin_Euler}. Thus, the period characterizations in \eqref{eq:BE_Zk(2)} and \eqref{eq:BE_Zk(4)}  immediately follow. \item These are collateral results of  \eqref{eq:G_promo}.   \qedhere\end{enumerate} \end{proof}\begin{remark}We note that  Xu \cite{Xu2021} (resp.\ Xu--Wang  \cite{XuWang2022}) studied the $ M'=0$ (resp.\ $M=0$) cases of the series \eqref{eq:BE_Zk(2)} and \eqref{eq:BE_Zk(4)}, using different methods based on Hoffman's multiple $t$-values \cite{Hoffman2019}. A recent result of Au \cite[Theorem 1.1]{Au2022b} extends \eqref{eq:BE_Zk(2)} and \eqref{eq:BE_Zk(4)} to  $ \mathfrak Z(N)$ with $ N\in\mathbb Z_{\geq2}$.   \eor\end{remark}
\begin{remark}
 Since $ \log 2\in\mathfrak Z_1(2)$, one can confirm \eqref{eq:UV_AMZV} by combining \eqref{eq:BE_Zk(2)} and \eqref{eq:BE_Zk(2)_even} with \eqref{eq:xi_r}. Unlike the discrepancy between Theorem \ref{thm:Zk(3)} and \eqref{eq:Z12_inv_binom}, the AMZV characterizations in Theorem  \ref{thm:Zk(2)}  remain effective down to the individual non-linear Euler sums [see \eqref{eq:u4} and \eqref{eq:v4}] that constitute the $k$-th order derivatives in  \eqref{eq:UV_AMZV}.  \eor\end{remark}

The techniques in the last four corollaries can be combined to take some recently proven series evaluations \cite{Campbell2022WangChu,Charlton2022SunConj}  a fair distance.
\begin{corollary}[GPL and CMZV representations of generalized Sun series]\label{cor:Sun} If    $ r_1,\dots,r_M\in\mathbb Z_{>0}$,   $ {r}'_1,\dots,{r}'_{{M'}}\in\mathbb Z_{>0}$, $ |1+\tau^2|\leq 2|\tau| $, and $s\in\mathbb Z_{\geq0} $, then we have\begin{align}\begin{split}&
\sum_{n=1}^\infty \left[\prod_{j=1}^M\mathsf H_{n-\smash[t]{1}}^{(r_j)}\right]\left\{\prod_{\smash[b]{j'}=1}^{M'}\Big[{{\mathsf H}}_{\smash[t]{2}n-1}^{( {r}'_{j'})}+\smash[t]{\overline{\mathsf H}}_{\smash[t]{2}n-1}^{( {r}'_{j'})}\Big]\right\}\frac{\binom{2n-2}{n-1}\left( \frac{1+\tau^2}{\tau} \right)^{2n-1}}{2^{4(n-1)}(2n-1)^{s+1}}\\\in{}& i\left(\mathfrak g^{(\tau)}_{k+1}(4)\cap\mathfrak g_{k+1,2}^{\left\{ i,-i,\tau ,-\frac{1}{\tau},-\tau,\frac{1}{\tau}\right\}}\right),\end{split}\label{eq:Sun_gen}
\end{align}where  $ k=s+\sum_{j=1}^M r_j+\sum_{{j}'=1}^{{M'}} {r}'_{j'}$.

 For $ \tau=\varrho\colonequals e^{\pi i/3}$, we have {\small\begin{align}
\sum_{n=1}^\infty \left[\prod_{j=1}^M\mathsf H_{n-\smash[t]{1}}^{(r_j)}\right]\left\{\prod_{\smash[b]{j'}=1}^{M'}\Big[{{\mathsf H}}_{\smash[t]{2}n-1}^{( {r}'_{j'})}+\smash[t]{\overline{\mathsf H}}_{\smash[t]{2}n-1}^{( {r}'_{j'})}\Big]\right\}\frac{\binom{2n-2}{n-1}}{2^{4(n-1)}(2n-1)^{s+1}}\in i\left[\mathfrak g^{(\varrho)}_{k+1}(4)\cap\mathfrak Z_{k+1}(12)\right],\label{eq:Sun12}
\end{align}}where  $ k=s+\sum_{j=1}^M r_j+\sum_{{j}'=1}^{{M'}} {r}'_{j'}$. It is possible to use CMZVs at a lower level to represent a special subclass of \eqref{eq:Sun12}, namely  \begin{align}
\sum_{n=1}^\infty \left[\prod_{j=1}^M\mathsf H_{n-\smash[t]{\frac12}}^{(r_j)}\right]\frac{\binom{2n-2}{n-1} }{2^{4(n-1)}(2n-1)^{s+1}}\in i\mathfrak Z_{k+1}(3),\label{eq:Sun12'}\tag{\ref{eq:Sun12}$'$}
\end{align}where  $ k=s+\sum_{j=1}^M r_j$.

 For  $ \tau=i\rho\colonequals i\frac{\sqrt{5}-1}{2}$, we have {\small\begin{align}
\sum_{n=1}^\infty \left[\prod_{j=1}^M\mathsf H_{n-\smash[t]{1}}^{(r_j)}\right]\left\{\prod_{\smash[b]{j'}=1}^{M'}\Big[{{\mathsf H}}_{\smash[t]{2}n-1}^{( {r}'_{j'})}+\smash[t]{\overline{\mathsf H}}_{\smash[t]{2}n-1}^{( {r}'_{j'})}\Big]\right\}\frac{\binom{2n-2}{n-1}\left( -1 \right)^{n-1}}{2^{4(n-1)}(2n-1)^{s+1}}\in \mathfrak g^{(i\rho)}_{k+1}(4)\cap\mathfrak Z_{k+1}(60),\label{eq:Sun60}
\end{align}}where  $ k=s+\sum_{j=1}^M r_j+\sum_{{j}'=1}^{{M'}} {r}'_{j'}$. If $ k+1\leq3$, then
  the left-hand side of \eqref{eq:Sun60} belongs to  $ \mathfrak Z_{k+1}(10)$.

   \end{corollary}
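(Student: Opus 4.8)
The plan is to strip the central binomial $\binom{2n-2}{n-1}$ off the numerator by a Wallis moment and thereby reduce each series to an integral of the odd generating function produced in the proof of Corollary~\ref{cor:odd_var}, after which the fibration machinery of Corollary~\ref{cor:inv_binom_sum}\emph{(b)} and Theorem~\ref{thm:Zk(3)} applies essentially verbatim. The starting identity is $\int_0^{\pi/2}\sin^{2m}\theta\,\mathrm d\theta=\frac{\pi}{2}\,4^{-m}\binom{2m}{m}$, which with $m=n-1$ gives $\dfrac{\binom{2n-2}{n-1}}{2^{4(n-1)}}=\dfrac{2}{\pi}\int_0^{\pi/2}\bigl(\tfrac{\sin\theta}{2}\bigr)^{2n-2}\mathrm d\theta$. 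The decisive structural feature is the intrinsic prefactor $\tfrac1\pi$: it is precisely this $\tfrac1\pi$ (absent from the inverse-binomial moments of Corollary~\ref{cor:inv_binom_sum}, where the binomial sits in the denominator) that will trade one unit of weight for an overall factor of $i$ via $\pi i\in\mathfrak Z_1(4)$.

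For the general assertion~\eqref{eq:Sun_gen} I write $F(z)$ for the odd function on the left of~\eqref{eq:Sun_prep}, so that $F(z)\in\Span_{\mathbb Q}\{g(z)-g(-z):g\in\mathfrak G^{(z)}_{k+1}(2)\}$. Substituting the Wallis moment, interchanging sum and integral, and using $\bigl(\tfrac{\sin\theta}2\bigr)^{2n-2}\bigl(\tfrac{1+\tau^2}{\tau}\bigr)^{2n-1}=\tfrac{2}{\sin\theta}\,z^{2n-1}$ with $z=\tfrac{\sin\theta}{2}\cdot\tfrac{1+\tau^2}{\tau}$ recasts the left-hand side of~\eqref{eq:Sun_gen} as $\frac{2}{\pi}\int_0^{\pi/2}\frac{2}{\sin\theta}\,F(z)\,\mathrm d\theta$. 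The Weierstrass substitution $t=\tan\frac\theta2$ (so $\tfrac{\sin\theta}{2}=\tfrac{t}{1+t^2}$ and $\tfrac{2}{\sin\theta}\mathrm d\theta=\tfrac{2}{t}\mathrm dt$) then produces $\frac{4}{\pi}\int_0^1 F\bigl(\tfrac{(1+\tau^2)t}{(1+t^2)\tau}\bigr)\frac{\mathrm dt}{t}$, whose integrand is controlled by exactly the rational argument handled in the proof of Corollary~\ref{cor:inv_binom_sum}\emph{(b)}. The hypothesis $|1+\tau^2|\le 2|\tau|$ keeps this argument in the closed unit disc for all $t\in(0,1)$, the boundary $|z|=1$ being attained only at the endpoint $t=1$, where $s\in\mathbb Z_{\ge0}$ secures convergence; in particular no branch cut is crossed and no contour deformation is needed. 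The scaling property together with repeated use of the recursion~\eqref{eq:GPL_rec} rewrites each $G(\alpha_1,\dots,\alpha_{k+1};\pm\tfrac{(1+\tau^2)t}{(1+t^2)\tau})$ with $\alpha_i\in\{-1,0,1\}$ as a $\mathbb Z$-combination of $G(\beta_1,\dots,\beta_{k+1};t)$ with $\beta_i\in\{0,\tau,i,-1/\tau,-\tau,-i,1/\tau\}$, and the remaining $\int_0^1(\cdots)\tfrac{\mathrm dt}{t}$ is one further recursion step landing in $\mathfrak g^{(\tau)}_{k+2}(4)\cap\mathfrak g_{k+2,2}^{\{i,-i,\tau,-1/\tau,-\tau,1/\tau\}}$.

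The step I expect to be the main obstacle is reconciling this weight $k+2$ with the weight $k+1$ claimed in~\eqref{eq:Sun_gen}. Expanding a typical generator of $\mathfrak g^{(\tau)}_{k+2}(4)$ as $(\pi i)^\ell G(\bm\alpha;1)G(\bm\beta;\tau)$ with $\ell+w(\bm\alpha)+w(\bm\beta)=k+2$, one has $\tfrac1\pi(\pi i)^\ell=i(\pi i)^{\ell-1}$, which for $\ell\ge 1$ lands in $i\,\mathfrak g^{(\tau)}_{k+1}(4)$; the whole difficulty is thus to rule out a bare $\ell=0$ term (a pure GPL product carrying no $\pi i$), against which $\tfrac1\pi$ cannot be absorbed. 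This is where the oddness of $F$ is indispensable: the integrand argument $\tfrac{(1+\tau^2)t}{(1+t^2)\tau}$ is invariant under $t\mapsto 1/t$, so $\int_0^1=\tfrac12\int_0^\infty$, and combining this reflection with $F(-z)=-F(z)$ is expected to force the $\ell=0$ part of the fibration to cancel, leaving $\int_0^1 F(\cdots)\tfrac{\mathrm dt}{t}\in\pi i\,\mathfrak g^{(\tau)}_{k+1}(4)$. With $\pi i\in\mathfrak Z_1(4)$ from~\eqref{eq:piIZ1} the stray $\tfrac1\pi$ is then absorbed, the weight drops to $k+1$, and the overall $i$ of~\eqref{eq:Sun_gen} appears; this establishes~\eqref{eq:Sun_gen}.

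The three refinements follow by re-running the level computations of the preceding corollary at the relevant $\tau$. For $\tau=\varrho$ one checks $\tfrac{1+\tau^2}{\tau}=1$, so the point set $\{i,-i,\varrho,-1/\varrho,-\varrho,1/\varrho\}$ fibrates into level $12$ exactly as for~\eqref{eq:KWY1}, yielding~\eqref{eq:Sun12}; for $\tau=i\rho$ one has $\tfrac{1+\tau^2}{\tau}=-i$, which both supplies the sign $(-1)^{n-1}$ and contributes the factor $-i$ that cancels the $i$ of~\eqref{eq:Sun_gen}, while the points fibrate into level $60$ as for~\eqref{eq:Zk60} (the sharpening to $\mathfrak Z_{k+1}(10)$ when $k+1\le 3$ being read off from the same low-weight reductions invoked there), yielding~\eqref{eq:Sun60}. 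Finally, the sharper subclass~\eqref{eq:Sun12'} is handled apart: since only the half-integer product $\prod_j\mathsf H^{(r_j)}_{n-1/2}$ occurs, I apply the moment relation~\eqref{eq:Hhalf_prod} in place of $F$, obtaining $\frac{1}{2^{s-1}}\int_0^1\frac{2}{\sqrt{4-X^2}}\,\mathsf L^{(r_1,\dots,r_M)}_{s+1}(X^2)\,\mathrm dX$; the substitution $X=2\sin\psi$ turns this into $\frac{1}{2^{s-1}}\int_0^{\pi/6}\mathsf L^{(r_1,\dots,r_M)}_{s+1}(4\sin^2\psi)\,\mathrm d\psi$, which is exactly the shape of the integral over $(0,\tfrac\pi6)$ appearing in the proof of Theorem~\ref{thm:Zk(3)}. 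Re-using the substitution $\psi=\arctan\tfrac{1+t}{\sqrt3(1-t)}$ and the fibration~\eqref{eq:Zk3a} then places it in $i\,\mathfrak Z_{k+1}(3)$, which is sharper than the level-$12$ bound of~\eqref{eq:Sun12}.
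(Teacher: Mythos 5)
Your reduction of the left-hand side of \eqref{eq:Sun_gen} to $\tfrac{4}{\pi}\int_0^1 F\bigl(\tfrac{(1+\tau^2)t}{(1+t^2)\tau}\bigr)\tfrac{\D t}{t}$ matches the paper, which reaches the same integral from the beta moment $\tfrac{\binom{2n-2}{n-1}}{2^{4(n-1)}}=\int_0^1\bigl[\sqrt{X(1-X)}\bigr]^{2n-1}\tfrac{\D X}{\pi X(1-X)}$ and the substitution $X=\tfrac{1}{1+t^2}$. But the step you yourself flag as ``the main obstacle'' is a genuine gap, and your proposed resolution points in the wrong direction. You assert that because $|\varXi(t)|\le1$ for $t\in(0,1)$ ``no branch cut is crossed and no contour deformation is needed,'' and you then hope that the $t\mapsto 1/t$ reflection combined with $F(-z)=-F(z)$ ``is expected to force'' the $\ell=0$ (i.e.\ $\pi$-free) stratum of the weight-$(k+2)$ fibration to cancel. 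That cancellation is precisely the content of \eqref{eq:Sun_gen}, and nothing in your argument establishes it: the reflection and oddness only yield $\int_0^1=\tfrac14\int_{-\infty}^{\infty}$, which is where the real work begins, not where it ends. The paper's mechanism is exactly the contour deformation you renounce. One extends the integral to $\int_{-\infty}^{\infty}G\bigl(\alpha_1,\dots,\alpha_{k+1};\varXi(t)\bigr)\tfrac{\D t}{\pi t}$ (the pole at $t=0$ is cancelled by the oddness of $F$, and the arc at infinity vanishes since $\varXi(t)\to0$), deforms the contour into tight loops hugging the branch cuts in the upper half-plane, on which $\varXi\in(-\infty,-1)\cup(1,\infty)$ with endpoints in $\bigl\{i,\tau,\tfrac1\tau,-\tau,-\tfrac1\tau\bigr\}\cap\{w\in\mathbb C\mid \I w>0\}$, and then uses the fact that each jump discontinuity lies in $\pi i$ times a weight-$k$ space: integrating the jumps along the cuts and dividing by the prefactor $\pi$ produces the weight $k+1$ and the overall factor $i$ simultaneously. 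Without this (or an explicit substitute), your direct fibration only yields membership in $\tfrac1\pi\mathfrak g^{(\tau)}_{k+2}(4)$, which is strictly weaker than the claimed $i\,\mathfrak g^{(\tau)}_{k+1}(4)$, since $i\,\mathfrak g^{(\tau)}_{k+1}(4)=\tfrac1\pi\,(\pi i)\,\mathfrak g^{(\tau)}_{k+1}(4)$ is only a proper subspace of it.

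The rest of your proposal is in good shape. The bookkeeping at the special points is correct: $\tfrac{1+\tau^2}{\tau}=1$ at $\tau=\varrho$, and $\tfrac{1+\tau^2}{\tau}=-i$ at $\tau=i\rho$ (using $\rho^2=1-\rho$), which indeed accounts both for the sign $(-1)^{n-1}$ and for the cancellation of the prefactor $i$ in \eqref{eq:Sun60}. Your treatment of \eqref{eq:Sun12'} via \eqref{eq:Hhalf_prod}, the kernel $\sum_{n\ge1}\binom{2n-2}{n-1}X^{2n-2}/16^{n-1}=2/\sqrt{4-X^2}$, and the substitution $X=2\sin\psi$ is essentially the paper's own argument: the paper arrives at the same integral of $\mathsf L_{s+1}^{(r_1,\dots,r_M)}$ against $\D\arcsin\tfrac{x}{2}$ over the range corresponding to $\psi\in\bigl(0,\tfrac\pi6\bigr)$, and fibrates at level $3$ via the variable $u=-\tau^2$, which is interchangeable with your route through Theorem \ref{thm:Zk(3)} and \eqref{eq:Zk3a}. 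One further caveat: the level-$10$ statement for $k+1\le3$ is not simply ``read off'' from \eqref{eq:Zk60}; the paper proves it by a dedicated argument (GPL rescaling, $\log\rho\in\mathfrak Z_1(5)$, and a finite verification that iterated integrals on at most two letters drawn from each of three specific six-element alphabets reduce to $\mathfrak Z_{k+1}(10)$), which your sketch would need to reproduce.
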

\begin{proof}Noting that \begin{align}
\frac{\binom{2n-2}{n-1}}{2^{4(n-1)}}=\int_0^1\left[\sqrt{X(1-X)}\right]^{2n-1}\frac{\D X}{\pi X(1-X)},
\end{align}we may identify the left-hand side of \eqref{eq:Sun_gen} with a member of \begin{align}
\Span_{\mathbb Q}\left\{ \left.\int_0^\infty\left[ g\left( \frac{(1+\tau^2)t}{(1+t^{2})\tau} \right)-g\left(- \frac{(1+\tau^2)t}{(1+t^{2})\tau} \right) \right]\frac{\D t}{\pi t}\right|g(z)\in \mathfrak G^{(z)}_{k+1}(2)\right\},
\end{align}by virtue of the symmetry displayed in \eqref{eq:Sun_prep} and a variable substitution $ X=\frac{1}{1+t^2}$. This effectively reduces our target to an integral\begin{align}
\int_{-\infty}^\infty G\left( \alpha_{1},\dots, \alpha_{k+1};\frac{(1+\tau^2)t}{(1+t^{2})\tau} \right) \frac{\D t}{\pi t},\quad \alpha_{1},\dots, \alpha_{k+1}\in\{-1,0,1\},
\end{align}whose contour   can be deformed to loops wrapping around the branch cuts in the upper half-plane, on which   $ \frac{(1+\tau^2)t}{(1+t^{2})\tau}\in(-\infty,-1)\cup(1,\infty)\subset\mathbb R$. The end points of these branch cuts are in $ \big\{i,\tau,\frac1\tau,-\tau,-\frac{1}{\tau}\big\}\cap\{w\in\mathbb C|\I w>0\}$. As we fibrate with respect to $t$ (resp.\ $\tau$) before (resp.\ after) integration over branch cuts, we arrive at the claim in the right-hand side of  \eqref{eq:Sun_gen}.

The rationales behind the right-hand sides of  \eqref{eq:Sun12} and \eqref{eq:Sun60} are similar to those for \eqref{eq:KWY1} and \eqref{eq:Zk60}.

\begin{table}[t]
\caption{Selected closed forms for   binomial sums of Sun's type, where  $ \omega\colonequals e^{2\pi i/3}$, $ \varrho\colonequals e^{\pi i/3}$, $\rho\colonequals\frac{\sqrt{5}-1}{2}$,  $ \varsigma\colonequals e^{2\pi i/5}$,  $ G\colonequals\I\Li_2(i)$, and   $ \lambda\colonequals\log2$\label{tab:binom_sums}}

\begin{scriptsize}
\begin{align*}
\begin{array}{c|ll}
\hline\hline s&\begin{array}{@{}l}\displaystyle \quad \sum_{n=1}^\infty\frac{\binom{2n-2}{n-1}\vphantom{\frac{\frac11}{}}}{2^{4(n-1)}(2n-1)^{s+1}}\\\displaystyle=\frac{1}{2^ss!}\int_1^\varrho\left[ \log\left( -\frac{u}{(1-u)^2} \right) \right]^s\frac{\D u}{iu}\vphantom{\frac{}{\frac11}}\end{array}&\begin{array}{@{}l}\displaystyle \quad \sum_{n=1}^\infty \mathsf H^{(1)}_{n-\smash[t]{\frac12}}\frac{\binom{2n-2}{n-1}\vphantom{\frac{\frac11}{}}}{2^{4(n-1)}(2n-1)^{s}}\\\displaystyle=\frac{1}{2^{s-1}}\int_1^\varrho\mathsf L_s^{(1)}\left( -\frac{(1-u)^2}{u} \right)\frac{\D u}{iu}\end{array}\\\hline
1&\frac{3\I\Li_2(\omega)}2\vphantom{\dfrac{\int}{1}}&\I\Li_2(\omega)\\[5pt]2&\frac{7\pi^3}{2^3\cdot3^3}&3\I L_{2,1}(\omega)+\frac{\pi^3}{3^3}{}\\[5pt]3&\frac{3^3\I \Li_4(\omega)}{2^5}+\frac{\pi\zeta_3}{2^{2}\cdot3}&\frac{3\I \Li_4(\omega)}{2^3}+\frac{\pi\zeta_3}{2^{2}\cdot3}\\[5pt]4&-\frac{3^4\I L_{4,1}(\omega)}{2^3\cdot13}-\frac{3^3\I L_{3,2}(\omega)}{2^3\cdot13}+\frac{229\pi^{5}}{2^{7}\cdot3^2\cdot5\cdot13}&-\frac{3^4\I L_{4,1}(\omega)}{2^2\cdot13}-\frac{3^2\cdot5^2\I L_{3,2}(\omega)}{2^4\cdot13}+\frac{5\cdot19\pi^{5}}{2^{6}\cdot3^4\cdot13}\\[5pt]\hline\hline\multicolumn{3}{c}{}\\\hline\hline s&\multicolumn{2}{l}{\displaystyle  \sum_{n=1}^\infty \mathsf H^{(1)}_{2n-1}\frac{\binom{2n-2}{n-1}\vphantom{\frac{\frac11}{}}}{2^{4(n-1)}(2n-1)^{s}}=2\int_{\varrho}^i\mathsf L_s^{(1)}\left( \frac{1+t^{2}}{t} \right)\frac{\D t}{it}\vphantom{\frac{}{\frac11}}}\\\hline1&\multicolumn{2}{l}{\frac{2^3G}{3}-2\I \Li_2(\omega)\vphantom{\dfrac{\frac11}{}}}\\[5pt]2&\multicolumn{2}{l}{3\I L_{2,1}(\omega)-\frac{2^5\I L_{2,1}(i)}{3}+2^3\I L_{2,1}\big(\frac{i}{\varrho}\big)+2^{3}\I\Li_{1,1,1}\big(\frac{i}{\varrho},1,-1\big)-\frac{2^{4}G}{3}\big[\lambda+\R\Li_1\big(\frac{i}{\varrho}\big)\big]}\\&\multicolumn{2}{l}{{}+\pi \R\Li_1\big(\frac{i}{\varrho}\big)\big[\frac{1}{3}\R\Li_1\big(\frac{i}{\varrho}\big)+2^{2}\lambda\big]+2\pi\lambda^2-\frac{11\cdot19\pi^{3}}{2^{4}\cdot3^4}}\\[5pt]\hline\hline\multicolumn{3}{c}{}\\\hline\hline s&\begin{array}{@{}l}\displaystyle \quad \sum_{n=1}^\infty\frac{\binom{2n-2}{n-1}(-1)^{n-1}\vphantom{\frac{\frac11}{}}}{2^{4(n-1)}(2n-1)^{s+1}}\\\displaystyle=\frac{2}{s!}\int_{i\rho}^i\mathsf \log^{s}\left( \frac{t}{i(1+t^{2})} \right)\frac{\D t}{t}\vphantom{\frac{}{\frac11}}\end{array}&\begin{array}{@{}l}\displaystyle \quad \sum_{n=1}^\infty \mathsf H^{(1)}_{2n-1}\frac{\binom{2n-2}{n-1}(-1)^{n-1}\vphantom{\frac{\frac11}{}}}{2^{4(n-1)}(2n-1)^{s}}\\\displaystyle=2\int_{i\rho}^i\mathsf L_s^{(1)}\left( \frac{i(1+t^{2})}{t} \right)\frac{\D t}{t}\end{array}\\\hline 1&\frac{\pi^2}{2\cdot5}&-2^2\R\Li_{1,1}\left( -\frac{1}{\varsigma^{2}} ,\varsigma\right)+3\left\{\R[\Li_1(\varsigma^2)-\Li_1(\varsigma)]\right\}^{2}\vphantom{\frac{\frac\int1}{1}}\\&&{}-2^{2}\left\{ [\R\Li_1(\varsigma^2)]^{2} -[\R\Li_1(\varsigma)]^{2} \right\}-\frac{2^{2}\pi^2}{3\cdot5^2}\\[5pt]2&\frac{5^{2}\R\Li_3(\varsigma)}{2^{2}\cdot3}+\frac{\zeta_3}{2}&-5\R\Li_3(\varsigma)+\frac{2^3\zeta_3}{5}\\[5pt]\hline\hline\end{array}\end{align*}
\end{scriptsize}
\end{table}

Enlisting the help from \eqref{eq:Hhalf_prod}, we may equate the left-hand side of  \eqref{eq:Sun12'}  with\footnote{To construct Tables  \ref{tab:binom_sums} and \ref{tab:binom_sums_quad}, we need the following special cases of $ \mathsf L_s^{(1)}(x)=\frac{1}{2\pi i}\Big[\mathsf G_s^{(1),1}\left(\frac1{x-i0^+}\right)-\mathsf G_s^{(1),1}\left(\frac1{x+i0^+}\right)\Big]$ for $0<x<1$: $\mathsf L_1^{(1)}(x)= \Li_1(x)$, $ \mathsf L_2^{(1)}(x)=-\Li_2(x)+\frac{\pi^2}{6}$, $\mathsf L_3^{(1)}(x)=\Li_3(x)-\frac{\pi^2\log x}{6}-\zeta_3 $, and  $ \mathsf L_4^{(1)}(x)=-\Li_4(x)+\zeta_3\log x+\frac{\pi^2\log ^{2}x}{12}+\frac{\pi^{4}}{90}$.} \begin{align}\begin{split}&\frac{1}{2^{s}}
\sum_{n=1}^\infty \frac{\binom{2n-2}{n-1} }{2^{4(n-1)}}\int_{0}^1 \frac{\mathsf L_{s+1}^{(r_1,\dots,r_M)}(x^2)}{2n-1}\D x^{2n-1}\\={}&\frac{1}{2^{s-1}}\int_{\tau=i}^{\tau=\varrho}\mathsf L_{s+1}^{(r_1,\dots,r_M)}\left( \frac{(1+\tau^{2})^2}{\tau^{2}} \right)\D \arcsin\frac{1+\tau^{2}}{2\tau},\\\end{split}
\end{align}  where $ \mathsf L_{s+1}^{(r_1,\dots,r_M)}(X)\in\mathfrak g_{k}^{(X)}(1)$ and $ k=s+\sum_{j=1}^M r_j$. Introducing  a new variable $ u=-\tau^2$, one can reveal the expression above as member of {\small\begin{align}\begin{split}{}&
\Span_{\mathbb Q}\left\{\left.(\pi i)^{k-\ell}Z_{\ell-j}\int_{1}^{1/\varrho}G\left( \alpha_{1},\dots,\alpha_j;-\frac{(1-u)^2}{u} \right)\frac{\D u}{ iu}\right|\begin{smallmatrix}\alpha_{1},\dots,\alpha_j\in\{0,1\}\\Z_{\ell-j}\in\mathfrak Z_{\ell-j}(1)\\0\leq j\leq\ell\leq k\end{smallmatrix}\right\}\\\subseteq{}&\Span_{\mathbb Q}\left\{(\pi i)^{k-\ell}Z_{\ell-j}\left.\int_{1}^{1/\varrho}G\left( \alpha_{1},\dots,\alpha_j;u \right)\frac{\D u}{iu}\right|\begin{smallmatrix}\alpha_{1},\dots,\alpha_j\in\left\{0,1,\varrho=\frac{\omega^{2}-1}{z-1},\frac{1}{\varrho}=\frac{z}{\omega^{2}}\frac{\omega^{2}-1}{z-1}\right\}\\Z_{\ell-j}\in\mathfrak Z_{\ell-j}(1)\\z=\omega=e^{2\pi i/3};0\leq j\leq\ell\leq k\end{smallmatrix}\right\}\\\subseteq{}&\left.\frac{\mathfrak g_{k+1}\big[ 1,\omega,\omega^{2};z \big](3)}{i}\right|_{z=\omega} \subseteq i\mathfrak Z_{k+1}(3).\end{split}
\end{align}}

If the weight $ k+1$ does not exceed $3$ in the case of  \eqref{eq:Sun60}, then  we are effectively working with  $ \mathfrak g_{k+1,2}^S$, where the subset $S$ of $ \left\{ i,-i,\tau ,-\frac{1}{\tau},-\tau,\frac{1}{\tau}\right\}=\big\{ i,-i,i\rho,-\frac{1}{i\rho},\linebreak -i\rho,\frac{1}{i\rho}\big\}$ contains no more than $3$ members. By virtue of  GPL rescaling and the  fact that $ \log\rho=\R[\Li_1(e^{4\pi i/5})-\Li_1(e^{2\pi i/5})]\in\mathfrak Z_1(5)$,  we only need to check that $G(\bm \alpha;1) \in\mathfrak Z_{w(\bm \alpha)}(10)$ when  the components of $ \bm \alpha$ involve no more than  $2$ different numbers from one of the following sets:\begin{align}
\big\{ 1,-1,\rho,\tfrac{1}{\rho},-\rho,-\tfrac{1}{\rho}\big\},\quad \big\{ \tfrac{1}{\rho},-\tfrac{1}{\rho},1,\tfrac{1}{\rho^{2}},-1,-\tfrac{1}{\rho^{2}}\big\},\quad \big\{ \rho,-\rho,\rho^{2},1,-\rho^{2},-1\big\}.
\end{align}This can be done automatically by Au's \texttt{IterIntDoableQ} function.
 \end{proof}
\begin{remark}As revealed by  Table \ref{tab:binom_sums}, two $ \mathfrak Z_{k+1}(10)$ cases of  \eqref{eq:Sun60}  cover
 an infinite  series evaluated in \cite[\S4]{Charlton2022SunConj}:\begin{align}
\sum_{n=0}^\infty\frac{\binom{2n}{n}\left[ 5\mathsf H^{(1)}_{2n+1} +\frac{12}{2n+1}\right]}{(2n+1)^2(-16)^n}=14\zeta_3.
\end{align}Charlton--Gangl--Lai--Xu--Zhao \cite[\S\S2--3]{Charlton2022SunConj} verified another infinite series due to Zhi-Wei Sun, namely\begin{align}
\sum_{n=0}^\infty\frac{\binom{2n}{n}\left[ 9\mathsf H^{(1)}_{2n+1} +\frac{32}{2n+1}\right]}{(2n+1)^316^n}=40\I\Li_4(i)+\frac{5\pi\zeta_3}{12}\label{eq:Sun_beta4}
\end{align} by maneuvering identities of polylogarithms. Au \cite[Corollary 4.5]{Au2022b} achieved the same by an application of  the Wilf--Zeilberger method. At the time of this writing, we do not have an automated proof of \eqref{eq:Sun_beta4}, since Au's \texttt{MultipleZetaValues} package currently does not  simplify CMZVs in $ \mathfrak Z_{4}(12)$.   \eor\end{remark}

\begin{table}[t!p]\caption{Selected closed forms for  quadratic analogs of binomial sums, where  $ G\colonequals\I\Li_2(i)$ and   $ \lambda\colonequals\log2$\label{tab:binom_sums_quad}}

\begin{tiny}\begin{align*}
\begin{array}{c|l@{}l@{}}
\hline\hline s&\begin{array}{@{}l}\displaystyle \quad \sum_{n=1}^\infty\frac{\binom{2n-2}{n-1}^2\vphantom{\frac{\frac11}{}}}{2^{4n}(2n-1)^{s+1}}\\\displaystyle=\frac{1}{2^{2}\pi s!}\int_1^i\left[ \int_i^\tau\log^s\frac{(1+\tau^{2})t}{(1+t^{2})\tau}\frac{\D t}{t}\ \right]\frac{\D \tau}{1+\tau^2}\vphantom{\frac{}{\frac11}}\end{array}& \begin{array}{@{}l}\displaystyle \quad \sum_{n=1}^\infty\frac{\binom{2n}{n}^2\vphantom{\frac{\frac11}{}}}{2^{4n}n^{s+1}}\\\displaystyle=\frac{2^{s+4}}{\pi s!}\int_{0}^1\left[\int_0^\upsilon\log^s\frac{(1+t^{2})\upsilon}{(1+\upsilon^2)t}\frac{t\D t}{1+t^{2}}^{}\right]\frac{\D\upsilon}{1+\upsilon^2}\vphantom{\frac{}{\frac11}}\end{array}\\\hline0&\frac{G}{2^2\pi}\vphantom{\dfrac{\int^2}1}&-\frac{2^3G}{\pi}+2^{2} \lambda\\[5pt] 1 &\frac{\I L_{2,1}(i)}{\pi}+\frac{G\lambda}{2\pi}&\frac{2^6\I L_{2,1}(i)}{\pi}+\frac{2^5G\lambda}{\pi}-2^3\lambda^2\\[5pt]2&-\frac{3\I \Li_4(i)}{\pi}+\frac{2^2\I L_{2,1,1}(i)}{\pi}+\frac{5\cdot7\zeta_3}{2^6}&\frac{2^7\cdot3\I \Li_4(i)}{\pi}-\frac{2^9\I L_{2,1,1}(i)}{\pi}-2\cdot3\cdot11\zeta_3\\&{}+\frac{2\lambda\I L_{2,1}(i)}{\pi}+\frac{G}{2\pi}\left( \frac{\pi^2}{2^2} +\lambda^2\right)&{}-\frac{2^8\lambda\I L_{2,1}(i)}{\pi}-\frac{2^{6}G}{\pi}\left( \frac{\pi^2}{2^2} +\lambda^2\right)+\frac{2^5\lambda^3}{3}\\[5pt]3&\frac{2^4\cdot3^2\I L_{4,1}(i)}{7\pi}+\frac{2^2\cdot19\I L_{3,2}(i)}{7\pi}&\frac{2^{12}\cdot3^2\I L_{4,1}(i)}{7\pi}+\frac{2^{10}\cdot19\I L_{3,2}(i)}{7\pi}\\&{}+\frac{2^{4}\I L_{2,1,1,1}(i)}{\pi}-\frac{2\cdot3\lambda\I \Li_4(i)}{\pi}-\frac{5\zeta_{-3,1}}{2^3}&{}+\frac{2^{12}\I L_{2,1,1,1}(i)}{\pi}-\frac{2^{9}\cdot3\lambda\I \Li_4(i)}{\pi}\\&{}+\frac{2^3\lambda\I L_{2,1,1}(i)}{\pi}+\frac{5\cdot7\zeta_{3}\lambda}{2^5}&{}-2^{5}\cdot5\zeta_{-3,1}+\frac{2^{11}\lambda\I L_{2,1,1}(i)}{\pi}\\&{}+\left( \frac{\pi^{2}}{2^{2}}+\lambda^2 \right)\frac{2\I L_{2,1}(i)}{\pi}+\frac{G\lambda}{\pi}\left( \frac{\pi^{2}}{2^{2}} +\frac{\lambda^{2}}{3}\right)&{}+2^{3}\cdot3\cdot11\zeta_{3}\lambda+\left( \frac{\pi^{2}}{2^{2}}+\lambda^2 \right)\frac{2^9\I L_{2,1}(i)}{\pi}\\&{}+\frac{73\pi^{4}}{2^{8}\cdot3^2\cdot7}&{}+\frac{2^{8}G\lambda}{\pi}\left( \frac{\pi^{2}}{2^{2}} +\frac{\lambda^{2}}{3}\right)-\frac{2^{5}\lambda^4}{3}+\frac{73\pi^4}{3^2\cdot7}\\[5pt]\hline\hline\multicolumn{3}{c}{}\\
\hline\hline s&\begin{array}{@{}l}\displaystyle \quad \sum_{n=1}^\infty\frac{\binom{2n-2}{n-1}^2\mathsf H^{(1)}_{n-\smash[t]{\frac{1}{2}}} \vphantom{\frac{\frac11}{}}}{2^{4n}(2n-1)^{s}}\\\displaystyle=\frac{1}{2^{s+1}\pi}\int_1^i\left[ \int_i^\tau\mathsf L_s^{(1)}\left(\frac{(1+t^{2})^{2}\tau^{2}}{(1+\tau^{2})^{2}t^{2}}\right)\frac{\D t}{t}\ \right]\frac{\D \tau}{1+\tau^2}\vphantom{\frac{}{\frac11}}\end{array}& \begin{array}{@{}l}\displaystyle \quad \sum_{n=1}^\infty\frac{\binom{2n}{n}^2\mathsf H^{(1)}_{n} \vphantom{\frac{\frac11}{}}}{2^{4n}n^{s}}\\\displaystyle=\frac{2^{4}}{\pi}\int_{0}^1\left[\int_0^\upsilon \mathsf L_s^{(1)}\left(\frac{(1+\upsilon^2)^{2}t^{2}}{(1+t^{2})^{2}\upsilon^{2}}\right)\frac{t\D t}{1+t^{2}}^{}\right]\frac{\D\upsilon}{1+\upsilon^2}\vphantom{\frac{}{\frac11}}\end{array}\\\hline1 &\frac{2^{2}\I L_{2,1}(i)}{\pi}+\frac{G\lambda}{\pi}\vphantom{\frac{\frac\int1}1}&-\frac{2^6\I L_{2,1}(i)}{\pi}-\frac{\pi^2}{2\cdot3}\\[5pt]2&-\frac{37\I\Li_4(i)}{2\pi}+\frac{2^3\cdot3\I L_{2,1,1}(i)}{\pi}+\frac{3\cdot5\cdot7\zeta_{3}}{2^{5}}&-\frac{2^{5}\cdot5^2\I\Li_4(i)}{\pi}+\frac{2^{10}\I L_{2,1,1}(i)}{\pi}+3\cdot47\zeta_{3}\\&{}+\frac{2^3\lambda\I L_{2,1}(i)}{\pi}+\frac{3G}{\pi}\left( \frac{\pi^{2}}{2^{2}} +\frac{\lambda^{2}}{3}\right)&{}+\frac{2^8\lambda\I L_{2,1}(i)}{\pi}+2^5\pi G+\frac{2\pi^2\lambda}{3}\\[5pt]3&\frac{2^3\cdot149\I L_{4,1}(i)}{7\pi}+\frac{2\cdot313\I L_{3,2}(i)}{7\pi}&-\frac{2^{10}\cdot113\I L_{4,1}(i)}{7\pi}-\frac{2^{8}\cdot3\cdot79\I L_{3,2}(i)}{7\pi}\\[2pt]&{}+\frac{2^7\I L_{2,1,1,1}(i)}{\pi}-\frac{37\lambda\I\Li_4(i)}{\pi}-5\zeta_{-3,1}&{}-\frac{2^{12}\cdot3\I L_{2,1,1,1}(i)}{\pi}+\frac{2^{7}\cdot5^{2}\lambda\I\Li_4(i)}{\pi}\\[2pt]&{}+\frac{2^4\cdot3\lambda\I L_{2,1,1}(i)}{\pi}+\frac{3\cdot5\cdot7\zeta_{3}\lambda}{2^{4}}&{}+2^{5}\cdot3\cdot5\zeta_{-3,1}-\frac{2^{12}\lambda\I L_{2,1,1}(i)}{\pi}\\&{}+(\pi^2+2\lambda^2)\frac{2^2\I L_{2,1}(i)}{\pi}+\frac{G\lambda}{\pi}\left( \frac{3\pi^{2}}{2} +\frac{2\lambda^{2}}{3}\right)&{}-2^2\cdot3\cdot47\zeta_3\lambda-(3\pi^2+2^2\lambda^2)\frac{2^7\I L_{2,1}(i)}{\pi}\\&{}+\frac{5\cdot233\pi^{4}}{2^{9}\cdot3^2\cdot7}&{}-2^{7}\pi G\lambda-\frac{2^2\pi^2\lambda^{2}}{3}-\frac{29\cdot151\pi^4}{2^2\cdot3^2\cdot5\cdot7}\\[5pt]\hline\hline\multicolumn{3}{c}{}\\\hline\hline s&\begin{array}{@{}l}\displaystyle \quad \sum_{n=1}^\infty\frac{\binom{2n-2}{n-1}^2\mathsf H^{(1)}_{2n-1} \vphantom{\frac{\frac11}{}}}{2^{4n}(2n-1)^{s}}\\\displaystyle=\frac{1}{2^{2}\pi}\int_1^i\left[ \int_i^\tau\mathsf L_s^{(1)}\left(\frac{(1+t^{2})\tau}{(1+\tau^{2})t}\right)\frac{\D t}{t}\ \right]\frac{\D \tau}{1+\tau^2}\vphantom{\frac{}{\frac11}}\end{array}& \begin{array}{@{}l}\displaystyle \quad \sum_{n=1}^\infty\frac{\binom{2n}{n}^2\mathsf H^{(1)}_{2n} \vphantom{\frac{\frac11}{}}}{2^{4n}n^{s}}\\\displaystyle=\frac{2^{s+3}}{\pi}\int_{0}^1\left[\int_0^\upsilon \mathsf L_s^{(1)}\left(\frac{(1+\upsilon^2)t}{(1+t^{2})\upsilon}\right)\frac{t\D t}{1+t^{2}}^{}\right]\frac{\D\upsilon}{1+\upsilon^2}\vphantom{\frac{}{\frac11}}\end{array}\\\hline1 &\frac{5\I L_{2,1}(i)}{2\pi}+\frac{G\lambda}{2\pi}+\frac{\pi^2}{2^7}\vphantom{\frac{\frac\int1}1}&-\frac{2^4\cdot3\I L_{2,1}(i)}{\pi}-\frac{\pi^2}{2^2\cdot3}\\[5pt]2&-\frac{5^2\I \Li_4(i)}{2\pi}+\frac{2^4\I L_{2,1,1}(i)}{\pi}+\frac{5\cdot7\zeta_3}{2^4}&-\frac{2^5\cdot19\I\Li_4(i)}{\pi}+\frac{2^8\cdot3\I L_{2,1,1}(i)}{\pi}+109\zeta_{3}\\&{}+\frac{5\lambda\I L_{2,1}(i)}{\pi}+\frac{G}{2\pi}(\pi^2+\lambda^2)+\frac{\pi^2\lambda}{2^6}&{}+\frac{2^{6}\cdot3\lambda\I L_{2,1}(i)}{\pi}+2^3\cdot3\pi G+\frac{\pi^2\lambda}{3}\\[5pt]3&\frac{2^6\cdot13\I L_{4,1}(i)}{7\pi}+\frac{2^2\cdot109\I L_{3,2}(i)}{7\pi}&-\frac{2^{11}\cdot43\I L_{4,1}(i)}{7\pi}-\frac{2^{10}\cdot3^2\cdot5\I L_{3,2}(i)}{7\pi}\\[2pt]&{}+\frac{2^3\cdot11\I L_{2,1,1,1}(i)}{\pi}-\frac{5^{2}\lambda\I\Li_4(i)}{\pi}&{}-\frac{2^{10}\cdot3^2\I L_{2,1,1,1}(i)}{\pi}+\frac{2^{7}\cdot19\lambda}{\pi}\I\Li_4(i)\\&{}-\frac{5\cdot11\zeta_{-3,1}}{2^{4}}+\frac{2^5\lambda}{\pi}\I L_{2,1,1}(i)+\frac{5\cdot7\zeta_{3}\lambda}{2^3}&{}+2^{3}\cdot3^{2}\cdot5\zeta_{-3,1}-\frac{2^{10}\cdot3\lambda}{\pi}\I L_{2,1,1}(i)\\&{}+\left( \frac{11\pi^{2}}{2^{2}} +5\lambda^{2}\right)\frac{\I L_{2,1}(i)}{\pi}+\frac{G\lambda}{\pi}\left( \pi^{2}+\frac{\lambda^{2}}{3} \right)&{}-2^{2}\cdot109\zeta_{3}\lambda-2^{5}\cdot3\left( 3\pi^{2}+2^{2}\lambda^{2}\right)\frac{\I L_{2,1}(i)}{\pi}\\&{}+\frac{\pi^{2}\lambda^2}{2^6}+\frac{821\pi^4}{2^9\cdot3^2\cdot7}&{}-2^{5}\cdot 3\pi G\lambda-\frac{2\pi^{2}\lambda^2}{3}-\frac{29\cdot113\pi^4}{2^2\cdot3^2\cdot5\cdot7}\\[5pt]\hline\hline
\end{array}\end{align*}

\end{tiny}\end{table}
\begin{corollary}[Generalized Wang--Chu series as Goncharov--Deligne periods]For $ s\in\mathbb Z_{\geq0}$,   $ r_1,\dots,r_M\in\mathbb Z_{>0}$, and  $ {r}'_1,\dots,{r}'_{{M'}}\in\mathbb Z_{>0}$, we have (cf.\ \cite[(17), (18), (27)--(29)]{Campbell2022WangChu})\begin{align}
\sum_{n=1}^\infty \left[\prod_{j=1}^M\mathsf H_{n-\smash[t]{1}}^{(r_j)}\right]\left\{\prod_{\smash[b]{j'}=1}^{M'}\Big[{{\mathsf H}}_{\smash[t]{2}n-1}^{( {r}'_{j'})}+\smash[t]{\overline{\mathsf H}}_{\smash[t]{2}n-1}^{( {r}'_{j'})}\Big]\right\}\frac{\binom{2n-2}{n-1}^{2}}{2^{4n}(2n-1)^{s+1}}\in{}&\frac{ \mathfrak Z^{}_{k+2}(4)}{\pi i},\label{eq:WangChu_gen}\\\sum_{n=1}^\infty \left[\prod_{j=1}^M{{\mathsf H}}_{\smash[t]{}n-1}^{( {r}_{j})}\right]\left\{\prod_{\smash[b]{j'}=1}^{M'}\Big[{{\mathsf H}}_{\smash[t]{2}n-1}^{( {r}'_{j'})}+\smash[t]{\overline{\mathsf H}}_{\smash[t]{2}n-1}^{( {r}'_{j'})}\Big]\right\}\frac{\binom{2n}{n}^2}{2^{4n}n^{s+1}}\in{}&\frac{\mathfrak Z_{k+2}^{}(4)}{\pi i},\label{eq:binom_sum_quad}
\end{align}where  $ k=s+\sum_{j=1}^M r_j+\sum_{{j}'=1}^{{M'}} {r}'_{j'}$. \end{corollary}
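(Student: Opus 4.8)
The plan is to combine the central-binomial integral representation of Corollary~\ref{cor:Sun} with the generating functions of Theorem~\ref{thm:nonlin_Euler} and Corollary~\ref{cor:odd_var}, so that each quadratic binomial sum becomes an iterated integral of a \emph{single} harmonic polylogarithm, which I can then reduce by the fibration and contour-deformation techniques already used in Corollaries~\ref{cor:binom_sum} and~\ref{cor:Sun}. First I would record the Wallis-type identity $\frac{\binom{2n}{n}}{4^{n}}=\frac{2}{\pi}\int_{0}^{\infty}\big(\frac{2t}{1+t^{2}}\big)^{2n}\frac{\D t}{1+t^{2}}$ together with its index-shifted companion for $\frac{\binom{2n-2}{n-1}}{4^{n-1}}$. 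Feeding \emph{both} central binomials through this representation converts $\frac{\binom{2n}{n}^{2}}{2^{4n}}$ (for \eqref{eq:binom_sum_quad}) and $\frac{\binom{2n-2}{n-1}^{2}}{2^{4n}}$ (for \eqref{eq:WangChu_gen}) into a double integral in two real variables $t,\upsilon\in(0,\infty)$, weighted by $Z^{2n}$ (resp.\ $Z^{2n-1}$ after the shift) with $Z\colonequals\frac{2t}{1+t^{2}}\cdot\frac{2\upsilon}{1+\upsilon^{2}}\in(0,1]$, and carrying an overall constant proportional to $\pi^{-2}$.

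Next I would interchange summation and integration and invoke Corollary~\ref{cor:odd_var}. For \eqref{eq:binom_sum_quad} the inner series $\sum_{n\ge1}\big[\prod_{j}\mathsf H_{n-1}^{(r_j)}\big]\big\{\prod_{j'}\big[\mathsf H_{2n-1}^{(r'_{j'})}+\overline{\mathsf H}_{2n-1}^{(r'_{j'})}\big]\big\}\frac{Z^{2n}}{n^{s+1}}$ lies in $\mathfrak G^{(Z)}_{k+1}(2)$ by \eqref{eq:BE_HPL_even}; for \eqref{eq:WangChu_gen} the odd-index series with $\frac{Z^{2n-1}}{(2n-1)^{s+1}}$ lies in $\mathfrak G^{(Z)}_{k+1}(2)$ by \eqref{eq:BE_HPL}, the surplus factor $Z^{-1}$ coming from the shift merely replacing the measure $\frac{\D t}{1+t^{2}}$ by $\frac{\D t}{2t}$. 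In either case the series collapses to an expression of the shape $\pi^{-2}\int\!\!\int\mathcal H(Z)\,\D\mu(t)\,\D\mu(\upsilon)$, governed by a single weight-$(k+1)$ harmonic polylogarithm $\mathcal H\in\mathfrak G^{(Z)}_{k+1}(2)$ and rational one-forms $\D\mu$.

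Then I would carry out the two nested integrations as in the proofs of Corollaries~\ref{cor:binom_sum} and~\ref{cor:Sun}. Writing $\mathcal H(Z)$ as a generalized polylogarithm in the inner variable through the recursion \eqref{eq:GPL_rec}, its letters are the poles of $\D\mu$ (at $0$ or $\pm i$) and the roots of $\frac{2t}{1+t^{2}}=\pm\frac{1+\upsilon^{2}}{2\upsilon}$, which are Möbius images of $\upsilon$ such as $\frac{-i(\upsilon+i)}{\upsilon-i}$ and therefore fibre over fourth roots of unity. Deforming the real contours onto paths running through the fourth root $i$ (the contours through $i$ that underlie Table~\ref{tab:binom_sums_quad}) and fibrating with respect to the outer variable $\upsilon$ then localizes every argument to $\{0,\pm1,\pm i\}$, i.e.\ to level~$4$, while the branch-cut jumps encountered as $Z$ crosses $(1,\infty)$ supply the powers of $\pi i$, exactly as in \eqref{eq:KWY_quad} and \eqref{eq:KWY_quad'}. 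A weight count then completes the argument: $\mathcal H$ has weight $k+1$ and the two integrations raise the weight to $k+3$, so the double integral is a level-$4$ period of weight $k+3$; the contour mechanism shows it to be $\pi i$ times a weight-$(k+2)$ cyclotomic multiple zeta value of level~$4$, and dividing by $\pi^{2}=-(\pi i)^{2}$ (with $\pi i\in\mathfrak Z_{1}(4)$ by \eqref{eq:piIZ1} and the filtration \eqref{eq:Zk(N)_filt}) leaves a member of $\frac{1}{\pi i}\mathfrak Z_{k+2}(4)$, as claimed for both \eqref{eq:binom_sum_quad} and \eqref{eq:WangChu_gen}.

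The hard part will be the bookkeeping of these $i$- and $\pi$-factors through the double integration. In contrast to the single-integral arguments of Corollaries~\ref{cor:Sun} and~\ref{cor:binom_sum}, one must fibre out the inner variable while its polylogarithmic letters depend algebraically (through the displayed Möbius maps) on the outer variable, fibre the outer variable in turn, and then verify that the two successive layers of jump discontinuities combine to exactly one surviving power of $\pi i$---equivalently, that the double integral carries no pure-cyclotomic component of full weight $k+3$. Securing this cancellation, and thereby the sharp weight~$k+2$ at level~$4$, is the crux; once it is in place, the substitutions $\mathsf H_{2n-1}^{(r)}+\overline{\mathsf H}_{2n-1}^{(r)}\mapsto\mathsf H_{2n-1}^{(r)}$ and $\mathsf H_{n-1}^{(r)}\mapsto\mathsf H_{n}^{(r)}$ leave the ambient $\mathbb Q$-vector space unchanged, just as recorded after \eqref{eq:KWY1}.
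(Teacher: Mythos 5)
You have located the crux yourself, and it is a genuine gap rather than mere bookkeeping: the double-Wallis representation destroys the mechanism that is supposed to produce the factor $\pi i$. If you trade \emph{both} central binomials for integrals via $\frac{\binom{2n}{n}}{4^{n}}=\frac{2}{\pi}\int_{0}^{\infty}\bigl(\frac{2t}{1+t^{2}}\bigr)^{2n}\frac{\D t}{1+t^{2}}$, then on the real domain $t,\upsilon\in(0,\infty)$ your combined argument satisfies $Z=\frac{2t}{1+t^{2}}\cdot\frac{2\upsilon}{1+\upsilon^{2}}\in(0,1]$ pointwise, so the inner harmonic polylogarithm $\mathcal H\in\mathfrak G^{(Z)}_{k+1}(2)$ from \eqref{eq:BE_HPL} or \eqref{eq:BE_HPL_even} never reaches its branch cut on $(1,\infty)$: the ``jump discontinuities encountered as $Z$ crosses $(1,\infty)$'' that you invoke simply do not occur, and deforming a convergent real integral onto complex paths does not change its value or create a $\pi i$-divisible decomposition by itself. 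What the nested integration then delivers is a generic weight-$(k+3)$ level-$4$ period, i.e.\ only membership in $\frac{1}{\pi^{2}}\mathfrak Z_{k+3}(4)$ --- strictly weaker than the asserted $\frac{1}{\pi i}\mathfrak Z_{k+2}(4)$, since not every weight-$(k+3)$ CMZV is $\pi i$ times one of weight $k+2$. The cancellation you defer (``no pure-cyclotomic component of full weight $k+3$'') is the entire content of the corollary, and your proposal supplies no mechanism for it.

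The paper avoids this trap by a different decomposition: only \emph{one} binomial factor is converted to an integral, namely $\frac{\pi}{2}\binom{2n-2}{n-1}=\int_{1}^{i}\bigl(\frac{1+\tau^{2}}{\tau}\bigr)^{2n-1}\frac{\D\tau}{i(1+\tau^{2})}$ for \eqref{eq:WangChu_gen}, and $\int_{0}^{1}\bigl(\frac{\upsilon}{1+\upsilon^{2}}\bigr)^{2n}\frac{\D\upsilon}{1+\upsilon^{2}}=\frac{\pi\binom{2n}{n}}{2^{2(2n+1)}}$ for \eqref{eq:binom_sum_quad}, while the other binomial stays inside the single-binomial results already proved: the inner series is then exactly \eqref{eq:Sun_gen} of Corollary \ref{cor:Sun} (resp.\ \eqref{eq:KWY2_alt} of Corollary \ref{cor:binom_sum}, with $s$ shifted to $s+1$), which after fibration already lies in $i\,\mathfrak g^{(\tau)}_{k+1}(4)$ (resp.\ $\mathfrak g^{(\upsilon)}_{k+1}(4)$) --- weight $k+1$, letters in $\{0,\pm1,\pm i\}$. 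A single further application of the GPL recursion \eqref{eq:GPL_rec} (with logarithmic regularization at $\tau=i$) raises the weight to exactly $k+2$, and the prefactors assemble into $\frac{1}{\pi i}$ automatically: $\frac{1}{\pi}$ from the Wallis normalization and $\frac{1}{2i}$ from the partial fractions of $\frac{1}{1+\tau^{2}}$. No cancellation needs to be ``secured'', because the harder half of the weight reduction was absorbed once and for all into Corollaries \ref{cor:Sun} and \ref{cor:binom_sum}, whose proofs contain the branch-cut analysis you were trying to reproduce in two variables simultaneously. To repair your argument, replace one of your two Wallis kernels by this bootstrap; as it stands, your route proves only the $\frac{1}{\pi^{2}}\mathfrak Z_{k+3}(4)$ statement.
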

\begin{proof}Observe that \begin{align}
\frac{\pi}{2}\binom{2n-2}{n-1}=\int_0^{\pi/2}(2\cos\phi)^{2n-2}\D\phi={}\int_{1}^{i}\left( \frac{1+\tau^2}{\tau} \right)^{2n-1}\frac{\D\tau}{i(1+\tau^2)},
\end{align}where the integral contour in the complex $\tau$-plane is a straight line segment, on which $ |1+\tau^2|\leq2|\tau|$. Hence, the last corollary allows us to equate the left-hand side of \eqref{eq:WangChu_gen} with a member of the following $ \mathbb Q$-vector space:\begin{align}
\frac1\pi\Span_{\mathbb Q}\left\{\left.\int_{1}^{i}\frac{(\pi i)^{\ell}G\left( \alpha_{1},\dots,\alpha_{k+1-\ell};\tau\right)\D\tau}{1+\tau^2}\right|\begin{smallmatrix} \alpha_{1},\dots,\alpha_{k+1-\ell}\in\{0,1,i,-1,-i\}\\0\leq\ell\leq k+1\end{smallmatrix}\right\}.
\end{align}Appealing to the  GPL recursion in \eqref{eq:GPL_rec} and applying Panzer's logarithmic regularization \cite[\S2.3]{Panzer2015} at $\tau=i$, we may recognize the expression above as a  subspace of $ \frac{1}{\pi i}\mathfrak Z_{k+2}(4)$.

To prove \eqref{eq:binom_sum_quad}, first verify\begin{align}
\int_0^1\left( \frac{\upsilon}{1+\upsilon^2} \right)^{2n}\frac{\D\upsilon}{1+\upsilon^2}=\frac{1}{2}\int_0^{1/2}[x(1-x)]^{n-\frac{1}{2}}\D x=\frac{\pi \binom{2n}{n}}{2^{2(2n+1)}}
\end{align} by a substitution $ \upsilon=\sqrt{x}/\sqrt{1-x}$, then integrate the relation \eqref{eq:KWY2_alt} in Corollary \ref{cor:binom_sum}(b).    \end{proof}\begin{remark} Some special cases of  \eqref{eq:WangChu_gen} and \eqref{eq:binom_sum_quad} have been investigated by Cantarini--D'Aurizio \cite[\S\S2--3]{CantariniAurizio2019} in the Fourier--Legendre framework. See Table  \ref{tab:binom_sums_quad} for more examples.   \eor\end{remark}

\subsection{An outlook on $ \varepsilon$-expansions of hypergeometric expressions\label{subsec:eps_expn}}
As pointed out in  \cite[\S3]{Kalmykov2007}, a common source for   non-linear Euler sums and related series in high energy physics is  an      $ \varepsilon$-expansion in the following form:  \begin{align}\begin{split}&
\left.\frac{\partial^{\ell}}{\partial\varepsilon^\ell}\right|_{\varepsilon=0} {_p}F_{p-1}\left(\!\!\left.\begin{smallmatrix}a_{1}+r_1\varepsilon,\dots,a_{p}+r_p\varepsilon\\[2pt]b_{1}+s_{1}\varepsilon,\dots,b_{p-1}+s_{p-1}\varepsilon\end{smallmatrix}\right| z\right)\\\colonequals{}&\left.\frac{\partial^{\ell}}{\partial\varepsilon^\ell}\right|_{\varepsilon=0}\left[1+\sum_{n=1}^{\infty}\frac{\prod_{j=1}^p(a_{j}+r_{j}\varepsilon)_n}{\prod_{k=1}^{p-1}(b_{k}+s_{k}\varepsilon)_n}\frac{z^{n}}{n!}\right]\in\mathbb C ,\quad   \ell\in\mathbb Z_{\geq0},\end{split}\label{eq:pFq_deriv}
\end{align}where  the (generalized) hypergeometric  series  $  {_p}F_{p-1}$ is defined with   \begin{align}
\begin{cases}2a_j\in\mathbb Z,r_{j}\in\mathbb Q,\ & 1\leq j\leq p, \\
2b_k\in\mathbb Z,s_{k}\in\mathbb Q, & 1\leq k\leq p-1, \\
\end{cases}\label{eq:pFq_para}
\end{align}   and  $ (a)_k\colonequals \Gamma(a+k)/\Gamma(a)$.

Up to this point, the studies of non-linear Euler sums [together with their (inverse) binomial variations] in this section allow us to evaluate some (but not all)     $ \varepsilon$-expansions  via multiple polylogarithms. We  emphasize ``some (but not all)'', because we have only handled certain series whose the summands involve powers of binomial coefficients    $ \binom{2n}n^\mathscr N$ for $ \mathscr N\in\{-2,-1,0,1,2\}$. These closed-form series  are further subdivided into two classes:\begin{itemize}
\item
If  $ \mathscr N\in\{-1,0,1\}$ and  $ |z|\leq 1$, then we have polylogarithmic evaluations of \eqref{eq:pFq_deriv}, provided that the choice of parameters in \eqref{eq:pFq_para} turns the right-hand side of \eqref{eq:pFq_deriv} into a $ \mathbb Q$-linear combination of  \eqref{eq:nonlin_Euler_per}, \eqref{eq:nonlin_Euler_per_alt}, \eqref{eq:KWY_inv_binom}, \eqref{eq:KWY_inv_binom_alt}, \eqref{eq:KWY2}, \eqref{eq:KWY2_alt}, \eqref{eq:BE_HPL}, \eqref{eq:BE_HPL_even},  and \eqref{eq:Sun_gen}, while ensuring its convergence at   $ z=1$;\item If $ \mathscr N\in\{-2,2\}$  and  $z=1$,  then we have  polylogarithmic reductions of \eqref{eq:pFq_deriv} when   the parameters in \eqref{eq:pFq_para} are orchestrated in such a way that the right-hand side of \eqref{eq:pFq_deriv} becomes a $ \mathbb Q$-linear combination  of \eqref{eq:KWY_quad}, \eqref{eq:KWY_quad'}, \eqref{eq:WangChu_gen}, and  \eqref{eq:binom_sum_quad}.
\end{itemize}

Instead of a microscopic dissection of  \eqref{eq:pFq_deriv} into a  $ \mathbb Q$-linear combination of [(inverse) binomial variations on] non-linear Euler sums, one can also compute  $ \varepsilon$-expansions while treating the (generalized) hypergeometric series $_pF_{p-1}$ as a whole.

To illustrate the holistic perspective to  $_pF_{p-1}$, we consider a double integral representation\footnote{The author thanks an anonymous referee for suggesting this approach.}  of a zeta Mahler measure for $ s\in(-2,0)$:\begin{align}&
W_{3}(s)=-\frac{3^{s+\frac32}\sin\frac{\pi s}{2}}{\pi^{2}}\int_0^1\D t\int_0^1\D u\frac{2\left[\frac{1-u^{2}}{4-{(1-u^{2})t}{}}\frac{t}{1-t}\right]^{s/2}}{(1-t)\left[ 4-{(1-u^{2})t}{} \right]},\label{eq:W3_Euler_int}
\end{align}which descends from a formula of Borwein--Borwein--Straub--Wan \cite[(24), with $ z=\sqrt{1-u^2}$]{BBSW2012Mahler} and Euler's integral representation for $ _2F_1$ \cite[Theorem 2.2.1]{AAR}.
Exploiting a beta integral $\int_{0}^1\big( \frac{t}{1-t} \big)^{s/2}\frac{\D t}{1-t}=-\frac{\pi}{ \sin\frac{\pi s}{2}}$, we can extend \eqref{eq:W3_Euler_int} to \begin{align}
\begin{split}W_3(s)={}&\frac{3^{s+\frac32}}{\pi}\left(\int_0^1\frac{2\left(\frac{1-u^{2}}{3+u^{2}{}}\right)^{s/2}}{3+u^{2}}\D u-\frac{\sin\frac{\pi s}{2}}{\pi}\int_0^1\D t\int_0^1\D u\frac{2\left[\frac{1-u^{2}}{4-{(1-u^{2})t}{}}\frac{t}{1-t}\right]^{s/2}}{\left(t-\frac{4}{1-u^{2}}\right)(3+u^{2})}\right.\\{}&\left.-\frac{\sin\frac{\pi s}{2}}{\pi}\int_0^1\D t\int_0^1\D u\frac{2\left(\frac{t}{1-t} \right)^{s/2}\left\{\left[\frac{1-u^{2}}{4-{(1-u^{2})t}{}}\right]^{s/2}-\left(\frac{1-u^{2}}{3+u^{2}{}}\right)^{s/2}\right\}}{(1-t)(3+u^{2})}\right)\\{}&\end{split}\tag{\ref{eq:W3_Euler_int}$'$}\label{eq:W3_Euler_int'}
\end{align}for $ s\in(-2,2)$.  Differentiating \eqref{eq:W3_Euler_int'} under the integral sign, and fibrating with respect to $u$ after integrating over $t$, we find \begin{align}\begin{split}&
\left.\frac{\D^k W_{3}(s)}{\D s^k}\right|_{s=0}\\\in{}&\frac{\sqrt{3}}{\pi}\Span_{\mathbb Q}\left\{\pi^{2\ell}(\log3)^{m}\int_0^1\frac{G(\bm \alpha;u)G(\bm \beta;1)\D u}{3+u^{2}}\left|\begin{smallmatrix}\alpha_{1},\dots,\alpha_{w(\bm\alpha)}\in\{0,1,-1, i\sqrt{3},-i\sqrt{3}\}\\\beta_{1},\dots,\beta_{w(\bm\beta)}\in\{0,1,4\}\\\ell,m,w(\bm\alpha),w(\bm\beta)\in\mathbb Z_{\geq0}\\2\ell+m+w(\bm\alpha)+w(\bm\beta)=k\end{smallmatrix}\right. \right\}\\\subseteq{}&\frac{\mathfrak Z_{k+1}(6)}{\pi i}.\end{split}\label{eq:W3_Z_k(6)}
\end{align} Here, to confirm the ``$\in$'' step, build \begin{align}\begin{split}&
\Span_{\mathbb Q}\left\{G(\widetilde{\alpha}_{1},\dots,\widetilde{\alpha}_n;1)\left|\widetilde{\alpha}_{1},\dots,\widetilde{\alpha}_{n}\in\left\{ 0,1,\frac{4}{1-u^{2}} \right\}\right.\right\}\\\subseteq{}&\Span_{\mathbb Q}\left\{G(\bm \alpha;u)G(\bm \beta;1)\left|\begin{smallmatrix}\alpha_{1},\dots,\alpha_{w(\bm\alpha)}\in\{0,1,-1, i\sqrt{3},-i\sqrt{3}\}\\\beta_{1},\dots,\beta_{w(\bm\beta)}\in\{0,1,4\}\\w(\bm\alpha),w(\bm\beta)\in\mathbb Z_{\geq0};w(\bm\alpha)+w(\bm\beta)=n\end{smallmatrix}\right.\right\}\end{split}
\end{align}inductively on \eqref{eq:GPL_diff_form}, with logarithmic regularization \cite[\S2.3]{Panzer2015} when $ \widetilde{\alpha}_{1}=1$; to confirm the ``$ \subseteq$'' step, simply note
 that  $ \pi^2\in\mathfrak Z_2(1)$, $ \log 2\in\mathfrak Z_1(2)$, and $ \log3\in\mathfrak Z_1(3)$, before evaluating  \texttt{IterIntDoableQ[\textbraceleft0, 1, -1, I Sqrt[3], -I Sqrt[3]\textbraceright]} and   \texttt{IterIntDoableQ[\textbraceleft0, 1, 4\textbraceright]}  in  Au's \texttt{MultipleZetaValues} package \cite{Au2022a}. This recovers Theorem \ref{thm:Zk(6)}.

For  $ |z|\leq1$, an $ \varepsilon$-expansion in \eqref{eq:pFq_deriv} may also give rise to  summands  involving     $ \binom{2n}n^\mathscr N$ for $ \mathscr N\in\mathbb Z\smallsetminus\{-1,0,1\}$.  Such binomial variations on non-linear Euler sums may or may not be representable by  CMZVs,
according to Zhi-Wei\ Sun's recent experimental discoveries  (see \cite[\S\S2--4]{Sun2022} and \cite[\S6]{Sun2023}). Some of Sun's new identities  have been  verified analytically by  Campbell \cite{Campbell2023Sun}, Hou--Sun \cite{HouSun2023}, Wei \cite{Wei2023Sun,Wei2023c}, Li--Chu \cite{LiChu2023}, and Wei--Xu \cite{WeiXu2023a,WeiXu2023b}. For future research, it is possibly worth studying  (multiple) integral representations for these  newly discovered  $ \varepsilon$-expansions and their generalizations, in search of algorithmic reductions to CMZVs and non-CMZVs.

\subsection*{Acknowledgments}The current research grew out of my unpublished  notes on polylogarithms and multiple zeta values prepared in 2012 for Prof.\ Weinan E's working seminar on mathematical problems in quantum field theory at Princeton University. Some parts of this paper were written during my visits to Prof.\ E at Peking University in summers  of 2017 and 2022. I dedicate this work to his 60th birthday and his vision for automating mathematical research.

I am deeply indebted to three anonymous referees, whose perceptive comments helped me improve the presentation of this paper in many ways.

\end{document}